\renewcommand{\MR}[1]{} 
\DeclareFontFamily{OT1}{pzc}{}
\DeclareFontShape{OT1}{pzc}{m}{it}{ <-> s*[1.0] pzcmi7t }{}
\DeclareMathAlphabet{\mathpzc}{OT1}{pzc}{m}{it}
\numberwithin{equation}{section}
\newtheorem{thm}{Theorem}[section]
\newtheorem{prop}[thm]{Proposition}
\newtheorem{lem}[thm]{Lemma}
\newtheorem{cor}[thm]{Corollary}
\newtheorem{conj}[thm]{Conjecture}
\theoremstyle{definition} 
\newtheorem{dfn}[thm]{Definition}
\newtheorem{alphatheorem}{Theorem}
\newtheorem{alphaproposition}[alphatheorem]{Proposition}
\theoremstyle{remark}
\newtheorem{rem}[thm]{Remark}
\newcommand{\beq}{\begin{equation}}
\newcommand{\eeq}{\end{equation}}
\newcommand{\be}{\begin{equation*}}
\newcommand{\ee}{\end{equation*}}
\newcommand{\bC}{\mathbb{C}}
\newcommand{\bZ}{\mathbb{Z}}
\newcommand{\bN}{\mathbb{N}}
\newcommand{\mc}{\mathcal}
\newcommand{\cY}{\mathcal{Y}}
\newcommand{\sfZ}{\mathsf{Z}}
\newcommand{\hf}{\tfrac12}
\newcommand{\cb}{\mathscr{B}}
\newcommand{\ch}{\mathscr{H}}
\newcommand{\cy}{\mathscr{Y}}
\newcommand{\g}{\mathfrak{g}}
\newcommand{\gl}{\mathfrak{gl}}
\newcommand{\fksl}{\mathfrak{sl}}
\newcommand{\fks}{\mathfrak{s}}
\newcommand{\fkv}{\mathfrak{v}}
\newcommand{\fkw}{\mathfrak{w}}
\newcommand{\id}{{\mathrm{id}}}   
\newcommand{\gr}{{\mathrm{gr}}}
\newcommand{\tl}{\tilde}
\newcommand{\wtl}{\widetilde}
\newcommand{\gge}{\geqslant}
\newcommand{\lle}{\leqslant}
\newcommand{\la}{\lambda}
\newcommand{\C}{{\mathbb C}}
\newcommand{\W}{{\mathbf W}}
\newcommand{\I}{{\mathbb I}}
\newcommand{\iI}{{}^\imath\I}
\newcommand{\Z}{{\mathbf Z}}
\newcommand{\ve}{\varepsilon}
\newcommand{\Y}{{\mathcal{Y}}}
\newcommand{\Yt}{{^{\imath}\mathcal{Y}}}
\newcommand{\wY}{{}^\imath{\widetilde{\mathcal{Y}}}}
\newcommand{\Tt}{T^\imath}
\newcommand{\Qt}{Q^\imath}
\newcommand{\dfo}{\eth}
\newcommand{\bv}{\mathbf{v}}
\newcommand{\bw}{\mathbf{w}}
\newcommand{\arxiv}[1]{\href{http://arxiv.org/abs/#1}{\tt arXiv:\nolinkurl{#1}}}
\newcommand{\brown}[1]{{\color{Brown}#1}}
\begin{document}
\pagestyle{myheadings}
\setcounter{page}{1}

\title[Shifted twisted Yangians]{Shifted twisted Yangians of quasi-split ADE types}

\author{Kang Lu}
\author{Weiqiang Wang}
\address{Department of Mathematics, University of Virginia, 
Charlottesville, VA 22903, USA}\email{kang.lu.math@gmail.com, ww9c@virginia.edu}
\author{Alex Weekes}
\address{Département de mathématiques, Université de  Sherbrooke, Sherbrooke, QC, Canada}
\email{alex.weekes@usherbrooke.ca}

\subjclass[2020]{Primary 17B37.}
\keywords{Drinfeld presentation, twisted Yangians}

\begin{abstract}
Associated to all quasi-split Satake diagrams of type ADE and even spherical coweights $\mu$, we introduce the shifted iYangians ${}^\imath \mathcal Y_\mu$ and establish their PBW bases. We construct the iGKLO representations of ${}^\imath \mathcal Y_\mu$, which factor through quotients called truncated shifted iYangians ${}^\imath \mathcal Y_\mu^\lambda$. In type AI with $\mu$ dominant, a variant of ${}^\imath \mathcal Y_\mu^{N\varpi_1^\vee}$ is identified with the truncated shifted iYangians  in another definition, which are isomorphic to finite W-algebras of type BCD. These new family of algebras has connections and applications to fixed point loci of affine Grassmannian slices which will be developed in a sequel. 
\end{abstract}
	
\maketitle

\setcounter{tocdepth}{1}
\tableofcontents

\thispagestyle{empty}

\section{Introduction}

\subsection{The goal}
Yangians $Y(\g)$, for complex simple Lie algebras $\g$ or $\g=\gl_N$,  admit natural variants known as shifted Yangians $Y_\mu$ associated to coweights $\mu$, which are actually subalgebras of $Y(\g)$ for $\mu$ dominant. Shifted Yangians in the dominant cases were first introduced in type A by Brundan-Kleshchev \cite{BK06}, who showed certain quotient algebra $Y_\mu^{N\varpi_1^\vee}(\gl_N)$ is isomorphic with a finite W-algebra of type A. Inspired by \cite{GKLO}, a general family of truncated shifted Yangians $Y_\mu^\la$ for general $\g$ of type ADE were introduced in \cite{KWWY14} for $\mu$ dominant, and for general $\mu$ in \cite{BFN19}, and they were shown to quantize generalized affine Grassmannian slices \cite{BFN19,W19}.  The aforementioned BK identification can then be viewed as an instance of quantizations of Poisson algebra isomorphisms between certain affine Grassmannian slices and nilpotent Slodowy slices \cite{WWY20,MV22}. 

In this paper and its sequel, we shall add a new twist to the above connections and the $\imath$Program (see \cite{WangICM}), providing a  framework in which new families of algebras quantize new Poisson varieties. This paper introduces shifted iYangians and studies their iGKLO representations, laying an algebraic foundation for the geometric applications to affine Grassmannian islices in the sequel \cite{LWW25islice}.

\subsection{Shifted iYangians}

Associated with Satake diagrams, Letzter formulated quantum symmetric pairs $(\bf U,\bf U^\imath)$ and a generalization to Kac-Moody type was given by Kolb, where $\bf U$ is a Drinfeld-Jimbo quantum group and $\bf U^\imath$ is a coideal subalgebra of $\bf U$ known as an iquantum group nowadays. The quantum group $\bf U$ appears as an iquantum group associated to the diagonal Satake diagram. The $\imath$-generalization of affine quantum groups and Yangians are known as affine iquantum groups and twisted Yangians (or iYangians). Twisted Yangians were defined earlier in \cite{Ols92, MR02, GuayR16} in R-matrix form, deforming the twisted current algebras (i.e., involutive fixed point subalgebras of current algebras).

Recently, a Drinfeld presentation of the twisted Yangian of type AI has been constructed in \cite{LWZ25} via Gauss decomposition; such presentations have been extended to twisted Yangians of all split types \cite{LWZ25degen} and then to all quasi-split types \cite{LZ24} via a degeneration construction from the corresponding Drinfeld presentation of (quasi-)split affine iquantum groups \cite{LW21, mLWZ24}. 
However, the identification of the two definitions of twisted Yangians (one via R-matrix, and the other via Drinfeld presentation) is only established in type AI and AIII, and in this paper we shall follow the ones in Drinfeld presentation and refer to them as iYangians. These iYangians and affine iquantum groups are determined by the underlying finite type quasi-split Satake diagrams $(\I,\tau)$. Here, $\tau$ is a diagram involution of the Dynkin diagram $\I$; the cases with $\tau=\id$ are called split. 

Associated to an arbitrary quasi-split Satake diagram $(\I,\tau)$, we have a symmetric pair $(\g, \g^{\omega_\tau})$, where $\omega_\tau =\omega_0\circ\tau$ for the Chevalley involution $\omega_0$, and the corresponding iYangian $\Yt=\Yt_0$. Modifying the iYangians in Drinfeld presentation, we introduce the shifted iYangians $\Yt_\mu$; see Definition \ref{def:qsplit}. Here $\mu$ is an arbitrary even spherical weight (see Definition \ref{def:spherical}), some natural conditions imposed by necessity of the iGKLO representations and the geometric properties of affine Grassmannian islices. Note that a  family of shifted iYangians  $\Yt_\mu$ of type AI, for $\mu$ dominant and even (automatically spherical since $\tau=\id$), has  appeared and played a fundamental role in \cite{LPTTW25}.

\begin{alphatheorem}
[PBW basis Theorem \ref{thm:pbw_arb}] \label{thm:pbw:Intr}
Let $\mu$ be an arbitrary even spherical coweight. 
Then the set of ordered monomials in the root vectors \eqref{rvset3} forms a basis for $\Yt_\mu$. 
\end{alphatheorem}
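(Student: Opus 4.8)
The plan is to prove the two halves of the statement — that the ordered monomials in the root vectors \eqref{rvset3} span $\Yt_\mu$, and that they are linearly independent — by separate arguments, following the pattern familiar from shifted Yangians but accounting for the $\tau$-twist.

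\emph{Spanning.} First I would fix the total order on the root vectors refining the natural partition into ``lowering'', ``Cartan-type'' and ``raising'' families, ordered within each family by the $\tau$-orbit of the underlying root and then by the spectral-parameter degree. Starting from an arbitrary word in the Drinfeld generators of Definition \ref{def:qsplit}, I would straighten it into ordered form by induction on a length-plus-total-degree statistic: the relations among the Cartan-type series rewrite an inversion as an ordered monomial plus strictly lower terms; the $ee$-, $ff$- and $ef$-relations do the same for a transposition of generators of distinct types; and the twisted Serre relations handle repeated indices. The $\mu$-dependence of the relations enters only through strictly lower-order corrections, so it does not affect the straightening. The one genuinely new feature relative to the untwisted case is that several relations couple a generator with its $\tau$-image, so the induction must be organized to move an entire $\tau$-orbit at once; this is bookkeeping rather than a real obstacle.

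\emph{Linear independence.} This is the substance of the theorem. I would put the loop (current) filtration on $\Yt_\mu$, placing the $r$-th Drinfeld generator in degree $r$, and record the leading symbols in $\gr \Yt_\mu$ of the root vectors \eqref{rvset3}. By the straightening argument above, $\gr \Yt_\mu$ is spanned by the commutative monomials in these symbols, so it remains to bound $\dim \gr_d \Yt_\mu$ from below by the number of degree-$d$ ordered monomials. I would obtain this bound from the $\imath$GKLO representations constructed in the paper: realizing $\Yt_\mu$ (hence $\gr\Yt_\mu$) on an explicit polynomial algebra, choosing a cyclic vector, and applying an ordered PBW monomial to it, a leading-term analysis in the polynomial algebra shows that distinct ordered monomials act by linearly independent elements. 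Since the $\imath$GKLO representation depends on the truncation parameter $\lambda$ and factors through $\Yt_\mu^\lambda$, I would take $\lambda$ large in the dominance order and assemble these into an asymptotically faithful family, so that in the limit no relations beyond those of $\Yt_\mu$ survive.

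Two alternative routes to the same lower bound are worth keeping in reserve. When $\mu$ is dominant and even one expects $\Yt_\mu$ to embed into the unshifted $\Yt_0 = \Yt$, so that independence is inherited from the PBW basis of $\Yt_0$; for general even spherical $\mu$ one would then build injective shift homomorphisms $\Yt_\mu \hookrightarrow \Yt_{\mu'}$ — adding a fixed dominant even spherical coweight, or using a factorization $\mu = \mu_1 + \mu_2$ together with a comultiplication-type map into a tensor product involving shifted iYangians — and transport independence from the dominant case. A third possibility, in the style of comparing associated graded algebras, is to identify $\gr \Yt_\mu$ with a $\mu$-shift of the enveloping algebra of the twisted current algebra $\g^{\omega_\tau}[t]$ and invoke the classical PBW theorem there. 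In every version the decisive point — and the main obstacle — is the same: one must verify that the monomial count produced by the model (the $\imath$GKLO polynomial algebra, or the classical side) matches exactly the number of ordered monomials in \eqref{rvset3}. Because in the $\imath$-setting the ``Cartan part'' of $\Yt_\mu$ is not a free polynomial algebra on the obvious generators but carries relations forced by the $\tau$-symmetry, confirming that the list \eqref{rvset3} is neither too small nor too large is where the real work lies.
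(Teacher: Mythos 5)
Your spanning argument is fine and is essentially the paper's Proposition \ref{prop:span} (a filtration with $\deg B=1$, $\deg H=0$ plus the observation that the $\overline B_j^{(s)}$ satisfy the relations of the positive half of an ordinary Yangian). The gap is in linear independence, and your primary route does not work as stated. The paper never uses the iGKLO representations here: asymptotic faithfulness of the family $\Phi_\mu^\la$ is not established anywhere in the paper (in the untwisted setting that statement is itself proved \emph{using} the PBW basis), the maps $\Phi_\mu^\la$ only exist under the extra hypotheses $\la\gge\mu$ dominant $\tau$-invariant and the parity constraint \eqref{parity}, and the ``leading-term analysis on a cyclic vector'' you defer to is exactly the hard step you would have to carry out from scratch. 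Your fallback also fails for a directional reason: the shift homomorphisms of Lemma \ref{lem:shiftqs} exist only for \emph{anti-dominant} $\nu$, i.e.\ they move $\mu$ downward, so for a general even spherical $\mu$ there is no map $\Yt_\mu\hookrightarrow\Yt_{\mu'}$ with $\mu'$ dominant, and no comultiplication-type map for shifted iYangians is available; moreover, in the paper the injectivity of $\iota^\tau_{\mu,-\mu_1}$ for dominant $\mu$ (Remark \ref{rem1}) is a \emph{consequence} of the PBW theorem, not an input to it.

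What the paper actually does is the reverse reduction: for arbitrary even spherical $\mu$ it shifts \emph{down} to an anti-dominant coweight $\mu+\nu+\tau\nu$, sends ordered monomials to ordered monomials (up to scalars), and quotes Theorem \ref{thm:pbwqs} for the anti-dominant case. That case is handled by the mechanism of \cite[\S 3.12]{FKPRW}: one introduces the quotient ${}^\imath\wtl\Y$ of the Cartan doubled iYangian by $H_i^{(p)}=0$, $p<0$, proves its PBW basis by mapping it to $\Yt\otimes\bC[\xi_i\mid i\in\I]$ and invoking the known PBW theorem for the unshifted $\Yt_0$ (Remark \ref{pbwlz}), shows ${}^\imath\wtl\Y$ is free over the polynomial ring $\mathcal R$ of low Cartan modes (Corollary \ref{cortl}), proves that the ideal $\mc I_\mu$ cutting out $\Yt_\mu$ coincides with the corresponding left ideal (Lemma \ref{lemleft}), and then obtains $\Yt_\mu={}^\imath\wtl\Y\otimes_{\mc R}\bC$ by base change. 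This is also how the paper resolves the issue you correctly flag about the Cartan part not being free: the precise generating set \eqref{rvset3} (only even modes $H_i^{(2p)}$ for $i\in\I_0$, all modes for $i\in\I_1$) is validated through the auxiliary algebra ${}^\imath\wtl\Y$ and the known $\mu=0$ result, none of which appears in your proposal. As written, the proposal therefore has no complete argument for linear independence for any $\mu$, dominant or not.
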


The shift homomorphisms between shifted iYangians, $\iota^\tau_{\mu,\nu}:\Yt_\mu\to \Yt_{\mu+\nu+\tau \nu}$ (see Lemma~ \ref{lem:shiftqs}), for $\nu$ anti-dominant and $\mu, \nu+\tau \nu$ even spherical, are injective as a consequence of Theorem~\ref{thm:pbw:Intr}.

\subsection{iGKLO and truncated shifted iYangians}

Recall the GKLO homomorphisms from shifted Yangians to a ring of difference operators \cite{GKLO, KWWY14, BFN19}, with images called truncated shifted Yangians. We define a variant of such a ring of difference operators $\mc A$ as in \eqref{A:generators}--\eqref{A:relations}. We construct iGKLO homomorphisms from shifted iYangians to $\mc A$, first for $\Yt_\mu(\gl_n)$ in Theorem \ref{thm:iGKLO:gl}, and then for general $\Yt_\mu$. 

\begin{alphatheorem} [Theorem \ref{thm:GKLOquasisplit}]
    \label{thm:GKLO:Intr}
Let $(\I,\tau)$ be any quasi-split Satake diagram. Let $\la$ be a dominant $\tau$-invariant coweight and $\mu$ be an even spherical coweight subject to the parity constraint \eqref{parity}. 
Then there exists a unique homomorphism 
$\Phi_\mu^\la:\Yt_\mu[\bm z]\longrightarrow \mc A$ with prescribed formulas on generators. 
\end{alphatheorem}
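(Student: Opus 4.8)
The plan is to reduce the general quasi-split case to the already-established $\mathfrak{gl}_n$ case of Theorem \ref{thm:iGKLO:gl}, by folding and by realizing the general iYangian inside a suitable product of $\mathfrak{gl}$-type pieces attached to the $\tau$-orbits of the Dynkin diagram $\I$. Concretely, I would first set up the target: the ring $\mc A$ is built out of difference operators attached to the vertices of $\I$, and I would organize its generators according to the $\tau$-action, so that a $\tau$-fixed vertex contributes a ``split'' block and a $\tau$-orbit $\{i,\tau i\}$ of size two contributes a block isomorphic to the difference-operator ring appearing in the classical (untwisted) GKLO construction for $\gl$. The dominant $\tau$-invariant coweight $\la$ and the even spherical coweight $\mu$ decompose compatibly along these orbits, and the parity constraint \eqref{parity} is exactly what is needed for the shifts in each block to land in the correct residue classes.

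The core of the argument is existence: one must check that the prescribed images of the Drinfeld generators of $\Yt_\mu[\bm z]$ satisfy all the defining relations of Definition \ref{def:qsplit} after applying $\Phi_\mu^\la$. I would do this relation-by-relation, exploiting that the relations of a shifted iYangian are, by construction via the degeneration from \cite{LZ24, mLWZ24}, ``locally'' of one of a few types: (i) relations supported entirely on a single $\tau$-orbit, which are either genuinely of split rank-one/rank-two shape or, for a two-element orbit, of the $\gl_n$-type already handled by Theorem \ref{thm:iGKLO:gl}; and (ii) mixed relations between adjacent orbits, which are Serre-type or current-algebra-type relations whose verification is formally identical to the untwisted case \cite{GKLO, KWWY14, BFN19} once one knows the single-orbit pieces commute correctly. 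For the single-orbit split pieces I would either cite the split-type iGKLO already constructed earlier in the paper or verify the rank-one quasi-split relation by a direct (but short) computation with the generating series, using the classical identities for the difference operators $\mc A$; for the two-element orbits I invoke Theorem \ref{thm:iGKLO:gl} verbatim. Uniqueness is then immediate: the Drinfeld generators generate $\Yt_\mu[\bm z]$ as an algebra, so a homomorphism is determined by its values on them.

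The main obstacle I anticipate is the compatibility of the twisted (i.e.\ $\imath$-) generators — the ones coming from the fixed subalgebra $\g^{\omega_\tau}$, which have no untwisted analogue — with the $\gl$-type generators attached to the two-element orbits. These are precisely the relations that are new to the iYangian and do not appear in \cite{GKLO, KWWY14, BFN19}; their images under $\Phi_\mu^\la$ involve the ``reflected'' difference operators in $\mc A$, and one must check a quadratic (Onsager/$q$-Onsager-degenerate) relation and a cubic $\imath$-Serre-type relation survive. I would handle these by pulling the computation back to the $\gl_n$ setting: the quasi-split rank-one/rank-two data $(\I,\tau)$ embeds into a type-A Satake diagram in a way compatible with the Drinfeld presentations (this is how \cite{LZ24} constructs the quasi-split iYangian in the first place), so the needed relation in $\mc A$ follows from Theorem \ref{thm:iGKLO:gl} together with the explicit folding of the difference operators. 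Modulo this embedding, the remaining work is the bookkeeping of shifts and the parity constraint \eqref{parity}, which I would track carefully but which is routine.
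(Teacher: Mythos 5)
Your overall architecture (verify the prescribed formulas relation-by-relation in $\mc A$, treating split pieces, two-element $\tau$-orbits, and mixed relations separately, with uniqueness trivial since the images of generators are prescribed) does match the paper's strategy in outline. But there is a genuine gap precisely at the point you flag as the "main obstacle," and your proposed resolution would not go through. The hardest case is quasi-split type A$_{2n}$, where $c_{i,\tau i}=-1$: there the relation \eqref{bbNqs} with $j=\tau i$ and, above all, the Serre relation \eqref{SerreIII2} are genuinely new, and they do \emph{not} follow from Theorem \ref{thm:iGKLO:gl} "together with the explicit folding of the difference operators." Theorem \ref{thm:iGKLO:gl} concerns the \emph{split} type AI iYangian of $\gl_n$, whereas the paper itself notes that type AIII$_{2n}$ is special among all ADE Satake diagrams in that it contains \emph{no} split rank-one subdiagram, so there is no embedding or folding that pulls \eqref{SerreIII2} back to the AI computation. (Your appeal to \cite{LZ24} is also off: the quasi-split iYangian there is obtained by degeneration from affine iquantum groups, not by an embedding into a type-A Satake diagram compatible with Drinfeld presentations.) In the paper this case is handled by a separate argument: Lemma \ref{lem-reduction} reduces \eqref{SerreIII2} to $s=s_1=s_2=1$, and then one proves the stronger generating-series identity \eqref{serreA2} (a trick credited to \cite{SSX25}) by explicit manipulations of the images \eqref{BIBJ}, the commutation identities of Lemma \ref{lem-chi-com}, the residue formulas \eqref{rr'}--\eqref{r'r}, and Lemma \ref{lem:std}; note also that for $i\notin\I_0$ the operators for $i$ and $\tau i$ are the \emph{same} difference operators reflected via \eqref{signzw}, so the "two-element orbit block" is not simply an untwisted GKLO block once $c_{i,\tau i}\neq 0$, and new scalar (constant) terms appear that must match the inhomogeneous terms $H_i$, $H_j$ with the extra factor $1+\tfrac{\wp_i}{4u}$.

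Two smaller points. First, for two-element orbits with $c_{i,\tau i}=0$ your picture is correct (Remark \ref{rem:a3}: one gets a shifted Yangian of $\fksl_2$, so the untwisted GKLO computations of \cite{GKLO,KWWY14,BFN19} apply), and for mixed relations between a split node and a non-split neighbour the verification is indeed parallel to the split computation — but these mixed Serre relations \eqref{SerreIII} carry inhomogeneous right-hand sides and the extra $\chi_{i,0}^+$ terms, so "formally identical to the untwisted case" understates what must be checked; this is where the parity condition \eqref{parity} actually enters (it forces $\chi_{i,0}^+$ and $\chi_{j,0}^+$ not to be simultaneously nonzero), rather than through any residue-class bookkeeping of the shifts. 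Second, the split-type relations themselves are not covered by the untwisted literature and are exactly the content of the long verification in \S\ref{ssec:bb}--\ref{sec:serre} of the paper; citing "the split-type iGKLO already constructed earlier in the paper" is fine only because that is Theorem \ref{thm:iGKLO:gl}, i.e., the bulk of the work, not a short rank-one computation.
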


Finding the explicit formulas for the images of generators of $\Yt_\mu$ under $\Phi_\mu^\la$ is part of the main challenges here, as we work in complete generality for all types. Note that our formulas feature some inhomogeneous terms for general $\la, \mu$, as demanded by the geometric application in the sequel \cite{LWW25islice}. 
It is worth noting that the same parity condition \eqref{parity} appears naturally in the geometric setting {loc.~cit.}. 

The proof of Theorem \ref{thm:GKLO:Intr} is long and will occupy Section \ref{sec:proof1}. Since there are 3 affine rank one subalgebras and hence $\Yt_\mu$ admits several different types of Serre relations which require different computations, it is a very tedious and technical challenge to verify that $\Phi_\mu^\la:\Yt_\mu[\bm z]\rightarrow \mc A$ is a homomorphism. 

Associated with the Satake diagram of type AIII$_{2n}$ with an even number of nodes, a variant of the shifted iYangians based on \cite{LZ24} is also formulated in \cite{SSX25} though no PBW basis is established; the authors also construct a homomorphism to the ring $\mc A$, which factors through a corresponding Coulomb branch. In this case, we learned from these authors a generating function trick which allowed us to verify a Serre relation \eqref{SerreIII2} required for the iGKLO homomorphism. Type AIII$_{2n}$ is special among all ADE Satake diagrams in that it contains no split rank one subdiagram.

A truncated shifted iYangian (TSTY) of type $(\I,\tau)$ is by definition the image of the homomorphism $\Phi_\mu^\la$ and will be denoted by $\Yt_\mu^\la$. This family of algebras will play a fundamental role in the sequel, and they admit several favorable properties; here is one of them. 

\begin{alphaproposition}
    [Proposition \ref{prop:center}]
    \label{prop:centerTSTY:Intr}
    The center of the TSTY $\Yt_\mu^\la$ is a polynomial algebra.
\end{alphaproposition}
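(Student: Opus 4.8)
The plan is to adapt the template for truncated shifted Yangians \cite{KWWY14, BFN19}: first exhibit a polynomial central subalgebra $Z_0\subseteq\Yt_\mu^\la$, and then show by a filtration argument that $Z(\Yt_\mu^\la)=Z_0$. For the first step, observe that the parameters $\bm z$ are scalars in the defining relations of the difference-operator ring $\mc A$, so every polynomial in $\bm z$ is central in $\mc A$ and hence central in the subalgebra $\Yt_\mu^\la=\Phi_\mu^\la(\Yt_\mu[\bm z])$. The explicit iGKLO formulas of Theorem~\ref{thm:GKLOquasisplit} --- whose shape is already visible for $\gl_n$ in Theorem~\ref{thm:iGKLO:gl} --- reveal which polynomials in $\bm z$ actually lie in $\Yt_\mu^\la$: the parameters enter the images of the generating series only through the invariants of a finite reflection group $W'$ acting on $\bm z$, namely symmetric functions within each $\tau$-orbit block of flavor parameters together with the sign changes $z\mapsto -z$ dictated by the $\imath$-structure. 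Thus $Z_0:=\Yt_\mu^\la\cap\bC[\bm z]=\bC[\bm z]^{W'}$ is central in $\Yt_\mu^\la$ and, by the Chevalley--Shephard--Todd theorem, a polynomial algebra (in split type A with $\mu$ dominant this is consistent with the known polynomiality of the center of the corresponding finite W-algebra of type BCD).

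It remains to prove $Z(\Yt_\mu^\la)=Z_0$. Equip $\Yt_\mu$, and hence $\Yt_\mu^\la$ and its embedding in $\mc A$, with the Kazhdan filtration; then $\gr\Yt_\mu^\la$ is a finitely generated commutative Poisson algebra embedded in the polynomial Poisson algebra $\gr\mc A$, and for any $z\in Z(\Yt_\mu^\la)$ the symbol $\gr z$ lies in the Poisson center of $\gr\Yt_\mu^\la$. One shows that this Poisson center equals $\gr Z_0$ --- equivalently, that $\operatorname{Spec}\gr\Yt_\mu^\la$, the classical truncated islice, has generic symplectic leaves of codimension $\operatorname{rank}W'$ --- and then a routine induction on filtration degree, subtracting at each stage the element of $Z_0$ with the prescribed symbol, forces $z\in Z_0$, which finishes the proof.

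The main obstacle is precisely this Poisson-center computation: ruling out ``unexpected'' Casimirs of the classical islice. I would carry it out directly inside the explicit polynomial Poisson algebra $\gr\mc A$, reducing via the $\gl_n$ and affine rank-one building blocks behind Theorem~\ref{thm:iGKLO:gl} to a small-rank check; alternatively, it would follow from the symplectic-leaf description of affine Grassmannian islices to be established in \cite{LWW25islice}.
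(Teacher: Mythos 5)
There is a genuine gap, and in fact your intermediate claim contradicts what is true for $\Yt_\mu^\la$ as defined in the paper. By Definition~\ref{dfn:truncated stY}, $\Yt_\mu^\la$ is the image of $\Phi_\mu^\la:\Yt_\mu[\bm z]\to\mc A$, and the flavor variables $z_{i,s}$ are adjoined as central generators of $\Yt_\mu[\bm z]$ and land in $\mc A=\bC[\bm z]\langle\cdots\rangle$ as themselves; hence the \emph{entire} polynomial ring $\bC[\bm z]$ sits inside $\Yt_\mu^\la$ as a central subalgebra. So your identification $Z_0:=\Yt_\mu^\la\cap\bC[\bm z]=\bC[\bm z]^{W'}$ is wrong (the intersection is all of $\bC[\bm z]$), and your target equality $Z(\Yt_\mu^\la)=\bC[\bm z]^{W'}$ would be false whenever $W'$ is nontrivial --- the paper proves the center is exactly $\bC[\bm z]$. (Invariants of the type you describe are relevant only for the variant $\wY_\mu^{N\varpi_1^\vee}=\Phi_\mu^{N\varpi_1^\vee}(\Yt_\mu[\bm z^2]^{S_k})$ of Section~\ref{sec:2TSTY}, not for $\Yt_\mu^\la$ itself.) More seriously, the step you yourself flag as the crux --- that the Poisson center of $\gr\Yt_\mu^\la$ equals $\gr Z_0$, equivalently the codimension of generic symplectic leaves of the classical truncated islice --- is not carried out: it is deferred either to an unexecuted ``small-rank check'' inside $\gr\mc A$ (it is not clear such a reduction is available, since $\gr\Yt_\mu^\la$ with the quotient filtration is not described in this paper) or to the symplectic-leaf description in the sequel \cite{LWW25islice}, which is outside the present paper and would make the argument non-self-contained. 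As written, the proposal therefore does not prove the proposition.

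For comparison, the paper's proof is elementary and avoids filtrations and Poisson geometry altogether: following Futorny--Ovsienko, write a central element $x\in\Yt_\mu^\la\subset\mc A$ uniquely as $x=\sum_{\bm a}x_{\bm a}\dfo^{\bm a}$ with $x_{\bm a}$ rational in the $w_{i,r}$ over $\bC[\bm z]$; commuting with suitable elements of the Gelfand--Tsetlin subalgebra $\bC[\bm z][\mathsf A_i^{(r)}]$ forces $x_{\bm a}=0$ for $\bm a\neq\mathbf 0$, and then commuting with $\Phi_\mu^\la(B_j(u))$ shows the remaining rational function is invariant under all unit shifts $w_{j,s}\mapsto w_{j,s}+1$, hence constant, so $x\in\bC[\bm z]$. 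If you want to salvage your approach, you would need to replace $Z_0$ by $\bC[\bm z]$ and actually establish the Poisson-center (leaf-codimension) statement for $\gr\Yt_\mu^\la$, which is precisely the hard input your proposal assumes.
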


\subsection{A tale of two TSTY's}

Via a parabolic generalization of the Drinfeld presentation of twisted Yangians of type AI \cite{LWZ25}, a very different definition of shifted iYangians $\cY_\mu$ has been given in \cite{LPTTW25}, where $\mu$ is a partition assumed to be even in the sense that its associated nilpotent element gives rise to an {\em even} $\Z$-grading on the underlying classical Lie algebra. Miraculously, this notion of evenness is compatible with the evenness condition on $\mu$ as a dominant coweight (valid for all ADE type) used in this paper. A family of truncated shifted iYangians (TSTY) of type AI, denoted by $\cy_{n,\ell}^+(\sigma)$, was defined very differently in \cite{LPTTW25} (generalizing \cite{BK06}) in terms of generators and relations, and one main result {\em loc. cit.} is an algebra isomorphism between $\cy_{n,\ell}^+(\sigma)$ and finite W-algebras, quantizing the corresponding Slodowy slices of type BCD. 

It is a simple matter to align the combinatorial data $(n,\ell;\sigma)$ used in $\cy_{n,\ell}^+(\sigma)$ and a pair $(N\varpi_1^\vee, \mu)$ used in TSTY $\Yt_\mu^{N\varpi_1^\vee}$ here. 
We introduce a variant of  $\Yt_\mu^{N\varpi_1^\vee}$, denoted by $\wY_\mu^{N\varpi_1^\vee}$ in \eqref{def:tildeTSTY}.

\begin{alphatheorem} [Theorem \ref{thm:TruncatedSTY}]
    \label{thm:TSTY:Intr}
    These two versions of TSTY's are isomorphic: $\cy_{n,\ell}^+(\sigma) \cong \wY_\mu^{N\varpi_1^\vee}$.
\end{alphatheorem}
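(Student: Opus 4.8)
The plan is to produce the isomorphism $\cy_{n,\ell}^+(\sigma) \cong \wY_\mu^{N\varpi_1^\vee}$ by comparing generators-and-relations presentations on both sides and matching them through the explicit Gaussian/shift-parametrization that is already built into both constructions. First I would fix the dictionary between the combinatorial data: given $(n,\ell;\sigma)$ one reads off the even partition, the corresponding dominant even spherical coweight $\mu$, and the level $N$ so that $\la = N\varpi_1^\vee$; conversely the pair $(N\varpi_1^\vee,\mu)$ recovers $(n,\ell;\sigma)$. This alignment is asserted in the excerpt to be ``a simple matter'', so I would record it as a lemma and then treat the two algebras as being attached to the same data. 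The key point is that $\cy_{n,\ell}^+(\sigma)$ is defined in \cite{LPTTW25} through a parabolic Drinfeld presentation of the type AI twisted Yangian with explicit truncation relations generalizing Brundan--Kleshchev, while $\wY_\mu^{N\varpi_1^\vee}$ is, by \eqref{def:tildeTSTY}, a variant of the image of the iGKLO homomorphism $\Phi_\mu^{N\varpi_1^\vee}$ from Theorem \ref{thm:GKLO:Intr}.

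Second, I would set up the comparison in the shifted (untruncated) setting first. Both $\cY_\mu$ (from \cite{LPTTW25}) and $\Yt_\mu$ (Definition \ref{def:qsplit}) are shifted iYangians of type AI attached to the same even coweight $\mu$; since type AI is split, the relevant Drinfeld presentations are by construction degenerations of the split affine iquantum group presentation, and I expect the generators of $\cY_\mu$ to match those of $\Yt_\mu$ after a triangular change of variables coming from Gauss decomposition. I would exhibit an explicit assignment on generating currents and check it respects the defining relations of $\Yt_\mu$ listed in Definition \ref{def:qsplit} (the three families of Serre relations included), using the PBW basis Theorem \ref{thm:pbw:Intr} to conclude the map is an isomorphism by a dimension/leading-term count. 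This gives a canonical identification $\cY_\mu \cong \Yt_\mu[\bm z]$ (or the appropriate specialization) compatible with the shift homomorphisms.

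Third, I would descend to the truncated quotients. On the $\Yt$ side, $\wY_\mu^{N\varpi_1^\vee}$ is cut out as the image of $\Phi_\mu^{N\varpi_1^\vee}$ (up to the variant in \eqref{def:tildeTSTY}); on the $\cy$ side, $\cy_{n,\ell}^+(\sigma)$ is defined by imposing the explicit truncation relations of \cite{LPTTW25}. I would show that under the isomorphism $\cY_\mu \cong \Yt_\mu$ the truncation ideal of \cite{LPTTW25} maps onto the kernel of $\Phi_\mu^{N\varpi_1^\vee}$ (adjusted for the tilde variant): one inclusion follows by plugging the iGKLO formulas of Theorem \ref{thm:GKLO:Intr} into the defining truncation relations and checking they hold in $\mc A$; the reverse inclusion follows because the TSTY $\cy_{n,\ell}^+(\sigma)$ already surjects onto a finite W-algebra of type BCD (a result of \cite{LPTTW25}), which has the expected size, so any further relations would collapse it too far — again a PBW/dimension argument, now using that $\Yt_\mu^{N\varpi_1^\vee}$ has a PBW basis inherited from Theorem \ref{thm:pbw:Intr} together with the truncation.

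The main obstacle I anticipate is the precise bookkeeping of the \emph{variant} $\wY_\mu^{N\varpi_1^\vee}$ versus the raw image of $\Phi_\mu^{N\varpi_1^\vee}$: the iGKLO formulas in Theorem \ref{thm:GKLO:Intr} carry inhomogeneous terms depending on $\la,\mu$, and matching these against the normalization conventions in the Brundan--Kleshchev-style presentation of \cite{LPTTW25} will require a nontrivial rescaling/shift of the generating series (the change of variables in \eqref{def:tildeTSTY} is presumably exactly this adjustment). Verifying that this change of variables simultaneously intertwines \emph{all} defining relations — especially the truncation relations and the reflection/unitarity relation of the twisted Yangian — is where the real work lies; the rest is a comparison of PBW bases once the generators are correctly identified.
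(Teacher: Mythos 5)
Your overall two-step plan (first match the untruncated algebras, then show the truncation ideal of \cite{LPTTW25} dies under the iGKLO map and that nothing more dies) is the same skeleton as the paper's: the identification $\Yt_\mu \cong \mathscr{SY}_n^+(\sigma)$ and its extension $\Yt_\mu[\bm\sfZ]\cong \cy_n^+(\sigma)$ matching $\bm\sfZ(u)$ with a corrected Sklyanin determinant are Lemmas \ref{lem:kappa} and \ref{lem:iso-ext}, and the verification that $\ch_0^{(r)}$, $r>p_1$, and $\delta_{\bar p_1,\bar 0}\wtl\cb_1^{(\fks_{1,2}+p_1+1)}$ map to zero (they become $\mathsf A_{n-1}^{(r)}$ and $\mathsf B_{n-1}^{(\bv_{n-1}+1)}$, which lie in $\ker\Phi_\mu^{N\varpi_1^\vee}$) is Proposition \ref{prop:surjective}. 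Up to that point your proposal is sound.

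The genuine gap is in your injectivity step. You claim the reverse inclusion follows by ``a PBW/dimension argument, now using that $\Yt_\mu^{N\varpi_1^\vee}$ has a PBW basis inherited from Theorem \ref{thm:pbw:Intr} together with the truncation.'' No such PBW basis or presentation of the truncated algebra is available: a presentation of $\Yt_\mu^\la$ is precisely the open Conjecture \ref{conj:presentation}, and proving that the kernel of $\Phi_\mu^{N\varpi_1^\vee}$ is generated by the truncation elements is essentially that conjecture in this case, not something you can cite. Moreover, knowing that $\cy_{n,\ell}^+(\sigma)$ is (isomorphic to, not merely surjects onto) a finite W-algebra only controls the size of the \emph{source}; to conclude that the epimorphism onto $\wY_\mu^{N\varpi_1^\vee}$ is injective you need an independent \emph{lower} bound on the size of the image of the iGKLO map, and your proposal supplies none. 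The paper's proof gets this bound by a Gelfand--Kirillov dimension argument: $\cy_{n,\ell}^+(\sigma)$ is a domain with $\operatorname{GKdim}=\dim\mc S^\epsilon_\pi = 2\sum_i\fkv_i+k$ (from the PBW basis and the Slodowy-slice quantization of \cite{LPTTW25}, Lemma \ref{lem:dim}), while $\operatorname{GKdim}\wY_\mu^{N\varpi_1^\vee}\gge 2\sum_i\fkv_i+k$ is obtained by passing to the associated graded of the image and invoking the geometric computation of $\operatorname{Im}(\gr\Phi_\mu^{N\varpi_1^\vee})$ in the sequel \cite[Theorem~5.12]{LWW25islice} (dimension of the relevant fixed-point locus of an affine Grassmannian slice); since proper quotients of a domain strictly drop GK dimension, the surjection is an isomorphism. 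Without this external dimension input, or a substitute for it, your argument does not close.
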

This in particular gives a presentation for the TSTY $\wY_\mu^{N\varpi_1^\vee}$. It remains an open problem to give a presentations for the TSTY $\Yt_\mu^\la$ in general; see Conjecture \ref{conj:presentation}. 

As a $q$-analogue of shifted Yangians, shifted affine quantum groups of type A are constructed in \cite{FT19} and mapped homomorphically into the quantized K-theoretic Coulomb branches of framed quiver gauge theories. In an ongoing work, we have been able to formulate the shifted affine iquantum groups of split and quasi-split types and their truncations via iGKLO representations.


\subsection{Organization}
The paper is organized as follows.
In Section \ref{sec:S.T.Yangians}, we formulate the shifted iYangians and establish their PBW bases. 
We construct the iGKLO representations of shifted iYangians in Section \ref{sec:GKLO}, which allow us to define the TSTY $\Yt_\mu^\la$. Then we determine the center of $\Yt_\mu^\la$. 
Section \ref{sec:proof1} provides the long technical proof of Theorem \ref{thm:GKLO:Intr}. Finally, in Section \ref{sec:2TSTY}, we identify the TSTY of type AI formulated in \cite{LPTTW25} with a variant of the TSTY defined through iGKLO.

\vspace{2mm}
\noindent {\bf Acknowledgement.}
K.L. and W.W. are partially supported by the NSF grant DMS-2401351. A.W.~is supported by an NSERC Discovery Grant.  W.W. thanks ICMS at Edinburgh for providing a stimulating atmosphere in August 2023 when some of the connections was conceived, and National University of Singapore (Department of Mathematics and IMS) for providing an excellent research environment and support at the final stage of this work. The main results of this work and its sequel were announced in \cite{LPTTW25}. We thank Yaolong Shen, Changjian Su and Rui Xiong for communicating their work with us; we thank them and Curtis Wendlandt for helpful discussions.



\section{Shifted iYangians}
\label{sec:S.T.Yangians}

In this section, we introduce shifted iYangians $\Yt_\mu$ associated to all quasi-split ADE Satake diagrams $(\I,\tau)$ and even spherical coweights $\mu$. We further establish their PBW bases and shift morphisms between shifted iYangians.

\subsection{Shifted iYangians of quasi-split ADE type}

Let $C=(c_{ij})_{i,j\in \I}$ be the Cartan matrix of type ADE, and let $\g$ be the corresponding simple Lie algebra. We fix a simple system $\{\alpha_i\mid i\in \I\}$ with corresponding set $\Delta^+$ of positive roots. Let $\tau$ be an involution of the  Dynkin diagram of $\g$, i.e., $c_{ij}=c_{\tau i,\tau j}$ such that $\tau^2=\mathrm{id}$; note that $\tau=\id$ is allowed. We refer to $(\I,\tau)$ as quasi-split Satake diagrams and call the Satake diagrams split if $\tau=\id$. The split Satake diagrams formally look identical to Dynkin diagrams.
Denote $\I_0$ the set of fixed points of $\tau$ in $\I$, i.e., $\I_0 =\{i\in \I \mid \tau i=i\}$. Let $\I_1$ be a set of representatives for $\tau$-orbits in $\I$ of length 2 and define $\I_{-1}=\tau \I_1$; $\I_1$ can be conveniently chosen so that it is underlying Dynkin subdiagram is connected. Then $\I =\I_1 \sqcup \I_0 \sqcup \I_{-1}$. Set \begin{align}  \label{eq:iI}
\iI=\I_1 \sqcup \I_0.
\end{align}

The involution $\tau$ naturally acts on the (co)root and (co)weight lattices of $\g$. A weight or coweight in this paper is always meant to be integral.

\begin{dfn} \label{def:spherical}
A coweight $\mu$ is called {\em spherical} if $\mu =\mu_1 +\tau \mu_1$ for some coweight $\mu_1$. A coweight $\mu$ is \emph{even} if $\mu = 2\mu'$ for some coweight $\mu'$, i.e., $\langle \mu, \alpha_i\rangle \in 2\bZ$ for all $i\in \I$. 
\end{dfn}

We shall assume that a shift coweight $\mu$ to be even spherical in the context of shifted iYangians below, and remarkably, the same condition will be needed in consideration of fixed point loci of affine Grassmannian slices later on. 

Denote $[A,B]_+ =AB+BA$. 

\begin{dfn}
\label{def:qsplit}
Let $\mu$ be an even spherical coweight. The \textit{shifted iYangians $\Yt_{\mu}:=\Yt_\mu(\g)$ of quasi-split type} is the $\bC$-algebra with generators $H_{i}^{(r)}$, $ B_{i}^{(s)}$, for $i\in \I$, $r\in\bZ$, and $s\in\bZ_{>0}$, subject to the following relations, for $r,r_1,r_2\in \bZ$ and $s,s_1,s_2 \in \bZ_{>0}$: 
\begin{align}
& H_i^{(r)}=0~\text{ for }~r<-\langle \mu,\alpha_i\rangle,\quad H_{i}^{(-\langle \mu,\alpha_i\rangle)}=1,\label{def0}\\
&[H_i^{(r_1)},H_j^{(r_2)}]=0,\label{hhIII}\\
&[H_i^{(r+2)},B_j^{(s)}]-[H_i^{(r)},B_j^{(s+2)}]=\frac{c_{ij}-c_{\tau i,j}}{2} [H_{i}^{(r+1)},B_j^{(s)}]_+\label{hbNqs} \\
& \hskip4.92cm +\frac{c_{ij}+c_{\tau i,j}}{2} [H_{i}^{(r)},B_j^{(s+1)}]_+ +\frac{c_{ij}c_{\tau i,j}}{4}[H_i^{(r)},B_j^{(s)}], \nonumber\\
&[B_i^{(s_1+1)},B_j^{(s_2)}]-[B_i^{(s_1)},B_j^{(s_2+1)}]=\frac{c_{ij}}2 [B_i^{(s_1)},B_j^{(s_2)}]_+ + 2\delta_{\tau i,j}(-1)^{s_1}H_j^{(s_1+s_2)},\label{bbNqs}
\end{align}
and the Serre relations: for $c_{ij}=0$,
\beq\label{bbtau}
[B_i^{(s_1)},B_{j}^{(s_2)}]=(-1)^{s_1-1}\delta_{\tau i,j}H_{j}^{(s_1+s_2-1)},\\
\eeq
and for $c_{ij}=-1$, $i\ne \tau i\ne j$,
\beq\label{eq:Serre-ord}
\mathrm{Sym}_{s_1,s_2}\big[B_{i}^{(s_1)},[B_{i}^{(s_2)},B_{j}^{(s)}]\big]=0,
\eeq
and for $c_{ij}=-1$, $i=\tau i$,
\beq\label{SerreIII}
\begin{split}
\mathrm{Sym}_{s_1,s_2}\big[B_{i}^{(s_1)},[B_{i}^{(s_2)},B_{j}^{(s)}]\big] 
=
(-1)^{s_1-1}[H_{i}^{(s_1+s_2)},B_j^{(s-1)}],
\end{split}
\eeq
and for $c_{i,\tau i}=-1$,
\beq\label{SerreIII2}
\begin{split}
\mathrm{Sym}_{s_1,s_2}\big[B_{i}^{(s_1)},[B_{i}^{(s_2)},B_{\tau i}^{(s)}]\big]
=
4\,\mathrm{Sym}_{s_1,s_2}(-1)^{s_1-1}\sum_{p\gge 0} 3^{-p-1}[B_{i}^{(s_2+p)},H_{\tau i}^{(s_1+s-p-1)}].
\end{split}
\eeq
Here, if $c_{ij}=-1$ and $s=1$, we use the following convention in \eqref{SerreIII}:
\begin{align*}
[H_{i}^{(r)},B_j^{(s-1)}]
=\sum_{p\gge 0}2^{-2p}\big([H_{i}^{(r-2p-2)},B_{j}^{(s+1)}]-
[H_{i}^{(r-2p-2)},B_{j}^{(s)}]_+\big),
\end{align*}
which follows from \eqref{hbN} if $s>1$, cf. \cite[Lem. 4.11]{LWZ25}.
\end{dfn}

\begin{rem}
If $\tau=\mathrm{id}$, then a spherical coweight $\mu$ is automatically even. In the quasi-split case, it is possible to define shifted iYangians without the evenness condition for $\mu$ by modifying the second relation in \eqref{def0} to $H_{i}^{(-\langle \mu,\alpha_i\rangle)}=(-1)^{\langle \mu_1,\alpha_{\tau i}\rangle}$, where $\mu=\mu_1+\tau \mu_1$. Then Theorems \ref{thm:pbw:Intr} and \ref{thm:GKLO:Intr} (on PBW basis and iGKLO representations) still hold. We restrict ourselves to even $\mu$ to match the geometric consideration in the sequel \cite{LWW25islice}. It is also possible to modify the affine Grassmannian setting to match with this variation, but we have refrained from taking this route.
\end{rem}

Since $\tau$ is an involution, it follows from \eqref{bbtau} that 
\beq
H_i^{(r)}=(-1)^{r}H_{\tau i}^{(r)},
\eeq
and in particular,
\beq
H_i^{(2r+1)}=0, \qquad \text{ if } \tau i=i.
\eeq

\begin{lem}[{\cite[Lemma~3.4]{LZ24}}]\label{typeArel}
If $c_{\tau i,j}=0$, then the relation \eqref{hbNqs} is equivalent to 
\beq\label{alt}
[H_{i,r+1},B_{j,s} ]-[H_{i,r},B_{j,s+1} ]= \frac{c_{ij}}{2}[H_{i,r},B_{j,s}]_+.
\eeq    
\end{lem}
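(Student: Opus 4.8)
The plan is to prove the two implications separately. The implication ``\eqref{alt}$\Rightarrow$\eqref{hbNqs}'' is immediate, while ``\eqref{hbNqs}$\Rightarrow$\eqref{alt}'' is the substantive direction, and it is there that the truncation relations \eqref{def0} (i.e.\ $H_i^{(r)}=0$ for $r<-\langle\mu,\alpha_i\rangle$ and $H_i^{(-\langle\mu,\alpha_i\rangle)}=1$) enter.

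\emph{From \eqref{alt} to \eqref{hbNqs}.} I would simply write \eqref{alt} once with $r$ replaced by $r+1$ and once with $s$ replaced by $s+1$, and add the two resulting identities. The term $[H_i^{(r+1)},B_j^{(s+1)}]$ cancels, and the right-hand side becomes $\tfrac{c_{ij}}{2}\big([H_i^{(r+1)},B_j^{(s)}]_++[H_i^{(r)},B_j^{(s+1)}]_+\big)$, which is exactly the right-hand side of \eqref{hbNqs} when $c_{\tau i,j}=0$, since then $\tfrac{c_{ij}-c_{\tau i,j}}{2}=\tfrac{c_{ij}+c_{\tau i,j}}{2}=\tfrac{c_{ij}}{2}$ and $\tfrac{c_{ij}c_{\tau i,j}}{4}=0$.

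\emph{From \eqref{hbNqs} to \eqref{alt}.} Introduce, for $r\in\bZ$ and $s\in\bZ_{>0}$,
\[
T(r,s):=[H_i^{(r+1)},B_j^{(s)}]-[H_i^{(r)},B_j^{(s+1)}]-\tfrac{c_{ij}}{2}[H_i^{(r)},B_j^{(s)}]_+,
\]
so that \eqref{alt} is precisely the assertion that $T(r,s)=0$ for all $r,s$. A direct rearrangement shows that, under $c_{\tau i,j}=0$, relation \eqref{hbNqs} is equivalent to the two-term recursion $T(r+1,s)+T(r,s+1)=0$. Iterating it yields $T(r,s)=(-1)^k\,T(r-k,s+k)$ for all $k\ge 0$; note that the superscripts on the $B_j$'s occurring in this chain stay positive, so every instance of \eqref{hbNqs} invoked is legitimate. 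Now choose $k$ large enough that $r-k<-\langle\mu,\alpha_i\rangle$: by \eqref{def0} we have $H_i^{(r-k)}=0$, while $H_i^{(r-k+1)}$ is either $0$ or the unit $1$, so every bracket appearing in $T(r-k,s+k)$ vanishes. Hence $T(r,s)=0$, which is \eqref{alt}.

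\emph{Expected obstacle.} There is no serious obstacle here; the only point requiring care is the bookkeeping in the recursion: it must be run downward in the $H$-index and upward in the $B$-index, so that one both stays in the range $s>0$ where \eqref{hbNqs} is available and eventually reaches the range where \eqref{def0} forces the relevant $H$-generator — and hence the whole expression $T$ — to vanish.
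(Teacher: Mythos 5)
Your proof is correct. Note that the paper itself offers no argument for this lemma -- it simply cites \cite[Lemma~3.4]{LZ24}, where the statement is proved for the unshifted case ($\mu=0$, i.e.\ $H_i^{(r)}=0$ for $r<0$) -- and your argument is essentially that standard one, correctly transplanted to the shifted setting: the easy direction by adding the two instances of \eqref{alt} with $r\mapsto r+1$ and $s\mapsto s+1$, and the converse by the downward telescoping $T(r,s)=(-1)^kT(r-k,s+k)$ until the truncation \eqref{def0} forces $H_i^{(r-k)}=0$ and $H_i^{(r-k+1)}\in\{0,1\}$, hence $T(r-k,s+k)=0$. You also correctly identify the essential role of \eqref{def0}: without it (e.g.\ in the Cartan doubled algebra $\Yt_\infty$ of Definition~\ref{def:doubled}) the substantive implication would not go through, since \eqref{alt} is then genuinely stronger than \eqref{hbNqs}.
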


\begin{lem}\label{lem-reduction}
Assume that relations \eqref{def0}--\eqref{bbNqs} hold in an algebra with the same set of generators as $\Yt_{\mu}$. 
\begin{enumerate}
\item If \eqref{SerreIII} holds only for the special case $s=s_1=1$ and all $s_2>0 $, then \eqref{SerreIII} holds for all $s,s_1,s_2\in\bZ_{>0}$. 
\item If \eqref{SerreIII2} holds for the special case $s=s_1=s_2=1$, then  \eqref{SerreIII2} holds for all $s,s_1,s_2\in\bZ_{>0}$. 
\end{enumerate}
\end{lem}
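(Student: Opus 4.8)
The plan is to treat this as an inductive reduction: starting from the base cases (respectively $s=s_1=1$, all $s_2>0$; and $s=s_1=s_2=1$), bootstrap up to all values of the shift indices by applying the defining relations \eqref{hbNqs} and \eqref{bbNqs} to raise one index at a time. The key observation is that the ``$H$–$B$ commutation'' relation \eqref{hbNqs} and the ``$B$–$B$ commutation'' relation \eqref{bbNqs} each take the form ``(index raised by $2$) $=$ (lower-order correction terms)'', so that the monomials $[H_i^{(r)},B_j^{(s+2)}]$ and $[B_i^{(s_1+1)},B_j^{(s_2)}]$ can be rewritten in terms of expressions with smaller total degree, up to commutators of lower-index elements. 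Concretely, for part (2) I would first use \eqref{bbNqs} in the form $[B_i^{(s_1+1)},B_j^{(s_2)}] = [B_i^{(s_1)},B_j^{(s_2+1)}] + \tfrac{c_{ij}}2[B_i^{(s_1)},B_j^{(s_2)}]_+ + 2\delta_{\tau i,j}(-1)^{s_1}H_j^{(s_1+s_2)}$ (with $c_{i,\tau i}=-1$) to express the double bracket $\mathrm{Sym}_{s_1,s_2}[B_i^{(s_1)},[B_i^{(s_2)},B_{\tau i}^{(s)}]]$ with $(s_1,s_2,s)$ not all equal to $1$ in terms of the same quantity with smaller indices plus ``junk'' built from \eqref{hbNqs}, \eqref{bbNqs} applied to $H$'s and $B$'s; one then checks that the RHS of \eqref{SerreIII2} transforms under the same manipulations in a matching way, so the identity propagates. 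For part (1) the same strategy applies, using \eqref{hbNqs} (equivalently the simplified form \eqref{alt} of Lemma~\ref{typeArel} when $c_{\tau i,j}=0$, which holds here since $i=\tau i\ne j$ forces $c_{\tau i,j}=c_{ij}=-1$... so actually \eqref{hbNqs} itself) to move $s$ and $s_1$ upward from the base case, while the RHS $(-1)^{s_1-1}[H_i^{(s_1+s_2)},B_j^{(s-1)}]$ is controlled by \eqref{hbNqs} directly.

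The cleanest way to organize the bookkeeping is via generating functions. I would package the generators into series $H_i(u)=\sum_r H_i^{(r)}u^{-r}$, $B_i(u)=\sum_{s>0}B_i^{(s)}u^{-s}$ (or the analogous ``shifted'' versions adapted to \eqref{def0}), rewrite \eqref{hbNqs} and \eqref{bbNqs} as rational-function identities among these series — of the shape already implicit in the convention stated after \eqref{SerreIII2}, e.g. $[H_i(u),B_j(v)]$ equals an explicit rational kernel times $[H_i(u),B_j(v)]_+$-type terms — and observe that the Serre relations \eqref{SerreIII}, \eqref{SerreIII2} likewise admit generating-function forms. Indeed the presence of the geometric series $\sum_{p\ge 0}3^{-p-1}$ in \eqref{SerreIII2} and $\sum_{p\ge 0}2^{-2p}$ in the convention for \eqref{SerreIII} strongly suggests these are the Taylor expansions of the relevant rational kernels $(u-v-\cdots)^{-1}$. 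In this language the claim becomes: an identity of rational functions in $(u_1,u_2,v)$ that holds after specializing/extracting the lowest Fourier coefficients (the base case) and is equivariant under the ``raising'' operations automatically holds identically. This reduces the combinatorial induction to a finite check plus a symmetry argument.

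The main obstacle I expect is the following: when one raises an index in the triple bracket on the left side of \eqref{SerreIII} or \eqref{SerreIII2}, the correction terms produced by \eqref{bbNqs} include an anticommutator $\tfrac{c_{ij}}2[B_i^{(s_1)},B_j^{(s_2)}]_+$ and, crucially for \eqref{SerreIII2} where $j=\tau i$, a term $2(-1)^{s_1}H_{\tau i}^{(s_1+s_2)}$ sitting inside an outer bracket with $B_i^{(s)}$; expanding $[B_i^{(s)},H_{\tau i}^{(s_1+s_2)}]$ via \eqref{hbNqs} then re-introduces $B$–$H$ commutators that must be matched term-by-term against the (also index-raised) right-hand side $4\,\mathrm{Sym}(-1)^{s_1-1}\sum_p 3^{-p-1}[B_i^{(s_2+p)},H_{\tau i}^{(s_1+s-p-1)}]$. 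Verifying that these contributions cancel — i.e. that the geometric-series coefficients are exactly the ones forced by \eqref{hbNqs}, \eqref{bbNqs} — is the computational heart of the lemma, and is presumably where the ``generating function trick'' attributed to Shen–Su–Xiong in the introduction is used. I would do this verification once in the generating-function form (so the $3^{-p-1}$ sum collapses to a single rational factor), confirm the resulting rational identity, and then the passage from the base case to the general case follows by the degree/symmetry argument sketched above. Part (1) is strictly easier since the right-hand side of \eqref{SerreIII} involves only an $H$–$B$ commutator with no $H$ generator emerging from \eqref{bbNqs} inside an outer bracket (here $c_{ij}=-1$ with $i=\tau i\ne j$ means $\delta_{\tau i,j}=0$), so the same machinery applies with fewer terms to track.
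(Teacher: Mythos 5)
There is a genuine gap, and it lies in the mechanism you propose for the induction. The relations \eqref{hbNqs} and \eqref{bbNqs} do \emph{not} rewrite $[H_i^{(r)},B_j^{(s+2)}]$ or $[B_i^{(s_1+1)},B_j^{(s_2)}]$ ``in terms of expressions with smaller total degree'': they only transfer one unit of degree between the two slots of the bracket, modulo genuinely lower-order corrections. Consequently your shuffles can never change the total degree $s_1+s_2+s$, and, worse, they cannot touch the outer index $s_1$ at all once $\min(s_1,s_2)\geq 2$, because in $\mathrm{Sym}_{s_1,s_2}\big[B_i^{(s_1)},[B_i^{(s_2)},B_j^{(s)}]\big]$ the second argument of the outer bracket is a composite element, so \eqref{bbNqs} does not apply to it. Starting from the base cases $(1,s_2,1)$ (resp.\ $(1,1,1)$) you therefore cannot reach a general triple $(s_1,s_2,s)$ by these moves alone. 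The missing idea is the degree-\emph{raising} operation: because of \eqref{def0} one has $H_i^{(-\langle\mu,\alpha_i\rangle)}=1$, so \eqref{hbNqs} shows that $\operatorname{ad}H_i^{(-\langle\mu,\alpha_i\rangle+2)}$ sends $B_j^{(s)}$ to $2c_{ij}B_j^{(s+1)}$ plus lower-order terms; bracketing the base-case identity with such Cartan elements (and disentangling by induction on the total degree) is what actually raises $s_1,s_2,s$. This is precisely how the arguments the paper invokes proceed — the paper's own proof is a two-line citation to \cite[\S4.3]{LWZ25} for part (1) and to \cite[Proposition~3.12]{LZ24} for part (2) — and your plan never identifies this engine.

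Beyond that, the computational heart (checking that the right-hand sides of \eqref{SerreIII} and \eqref{SerreIII2}, including the $\sum_p 2^{-2p}$ and $\sum_p 3^{-p-1}$ tails, transform consistently under the raising operation) is exactly the part you explicitly defer (``I would do this verification \dots''), so as written the proposal is an outline rather than a proof. Your instinct that the geometric series are expansions of rational kernels and that generating functions are the right bookkeeping device is sound and is indeed how \cite{LZ24} organizes part (2); but to turn the outline into a proof you must (i) replace the degree-shuffling step by the $\operatorname{ad}H$ raising argument just described, and (ii) actually carry out the matching of the correction terms, rather than asserting that it ``presumably'' works.
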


\begin{proof}
Part (1) follows by the same argument of \cite[\S4.3]{LWZ25} while Part (2) follows by the same argument of \cite[Proposition~3.12]{LZ24}.
\end{proof}

\begin{dfn} \label{def:doubled}
The Cartan doubled iYangian $\Yt_\infty  $ of quasi-split type is the $\bC$-algebra with generators $H_{i}^{(r)}$, $ B_{i}^{(s)}$, for $i\in \mathbb I$, $r\in\bZ$, and $s\in\bZ_{>0}$ subject only to the relations \eqref{hhIII}--\eqref{SerreIII} (i.e., excluding \eqref{def0}).
\end{dfn}


Let $Q$ be the root lattice for $\g$. Consider the abelian group (called the $\imath$root lattice)
    \begin{align}  \label{Qtau}
    \Qt = Q / \langle \beta+\tau \beta \mid \beta \in Q\rangle.
    \end{align}
For $\beta \in Q$, denote its image by $\overline{\beta} \in \Qt$. The algebra $\Yt_\mu$ admits a grading by $\Qt$ defined by assigning degrees $\deg H_i^{(r)} = \overline{0}$ and $\deg B_i^{(s)} = \overline{\alpha_i}$.  Indeed, it is not hard to see that the defining relations of $\Yt_\mu$ are all homogeneous.  Let $T = \operatorname{Spec} \bC[Q]$ denote the torus whose character lattice is $Q$.  Then the grading on $\Yt_\mu$ by $\Qt$ corresponds to an action of a subgroup $\Tt $ of $T$:
    \begin{align} \label{eq:def of Tt}
    \Tt = \{ t \in T \mid \tau(t) = t^{-1}\}  = \operatorname{Spec} \bC[\Qt].
    \end{align}
Since $\tau$ permutes the basis $\{\alpha_i\}_{i \in \I}$ for  $Q$, we can identify $\Qt \cong \bZ^{\I_1} \times (\bZ/2\bZ)^{\I_0}$, where the $\bZ^{\I_1}$-factor has basis $\{\overline{\alpha_i}\}_{i \in \I_1}$ while $(\bZ/2\bZ)^{\I_0}$ is generated by  $\{\overline{\alpha_i} \}_{i \in \I_0}$.  If we quotient $\Qt$ by its torsion subgroup $(\bZ/2\bZ)^{\I_0}$, we obtain an induced grading on $\Yt_\mu$ by $\bZ^{\I_1}$. This corresponds to the adjoint action of the elements  $\{H_i^{(-\langle \mu, \alpha_i\rangle+1)} 
\mid i \in \I_{1}\}$.

\begin{lem}\label{lem:shiftqs}
Let $\mu,\nu$ be coweights such that both $\mu$ and $\nu+\tau \nu$ are even spherical. Suppose further that $\nu$ is anti-dominant. Then there exists a homomorphism 
\begin{align}
    \label{shift_hom}
    \iota^\tau_{\mu,\nu}:\Yt_\mu \longrightarrow \Yt_{\mu+\nu+\tau \nu}
\end{align}
defined by
\[
H_i^{(r)}\mapsto  H_i^{(r- \langle\nu+\tau \nu,\alpha_i\rangle )}, \qquad B_i^{(s)}\mapsto \begin{cases}B_i^{(s-\langle\nu,\alpha_i\rangle)}, &\text{ if }\langle\nu,\alpha_i\rangle \text{ is even},\\
\sqrt{-1}B_i^{(s-\langle\nu,\alpha_i\rangle)}, &\text{ if }\langle\nu,\alpha_i\rangle \text{ is odd},
\end{cases}
\]
for $r\in\bZ$ and $s\in \bZ_{>0}$.
\end{lem}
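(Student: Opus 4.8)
The plan is to verify directly that the proposed assignment on generators respects all the defining relations of $\Yt_\mu$, exploiting that everything is designed to track shifts of the spectral parameter. First I would record the bookkeeping: write $k_i := \langle \nu, \alpha_i\rangle$ and $m_i := \langle \mu, \alpha_i\rangle$, so that $\langle \nu + \tau\nu, \alpha_i\rangle = k_i + k_{\tau i}$ and, crucially, $k_i \equiv k_{\tau i} \pmod 2$ is \emph{not} automatic, so the parity of $k_i$ is what the $\sqrt{-1}$-twist is there to fix; one checks that $k_i + k_{\tau i}$ is even because $\nu + \tau\nu$ is even. The target index $-\langle \mu+\nu+\tau\nu, \alpha_i\rangle = -(m_i + k_i + k_{\tau i})$, and since $\nu$ anti-dominant gives $k_i \le 0$, the shift $s \mapsto s - k_i$ stays in $\bZ_{>0}$, while $r \mapsto r - (k_i + k_{\tau i})$ keeps the vanishing range $\{r < -m_i\}$ mapping into $\{r < -(m_i+k_i+k_{\tau i})\}$; this takes care of relation \eqref{def0} (the normalization $H_i^{(-m_i)} = 1$ is preserved since the shift is exact and the $H$-coefficient is $+1$).

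Next I would dispatch the relations not involving signs. Relation \eqref{hhIII} is immediate. For \eqref{hbNqs}, note that the assignment sends $B_j^{(s)} \mapsto \xi_j B_j^{(s-k_j)}$ with $\xi_j \in \{1, \sqrt{-1}\}$ a scalar, and $H_i^{(r)} \mapsto H_i^{(r - k_i - k_{\tau i})}$; every term of \eqref{hbNqs} has exactly one $H_i$-factor and one $B_j$-factor, so the scalar $\xi_j$ factors out uniformly and the relation becomes the same relation at shifted indices, which holds in $\Yt_{\mu+\nu+\tau\nu}$ (using that shifting $r$ by the fixed even amount $k_i + k_{\tau i}$ on all $H_i$-superscripts is consistent across the $r, r+1, r+2$ appearing). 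The Serre relations \eqref{eq:Serre-ord} (case $i \ne \tau i \ne j$) are pure-$B$ and homogeneous of multidegree $2\overline{\alpha_i} + \overline{\alpha_j}$; the scalar that comes out is $\xi_i^2 \xi_j$, which is nonzero, so the relation is again just its shifted avatar. The mixed Serre relation \eqref{SerreIII} with $i = \tau i$ (so $k_i = k_{\tau i}$ and $\xi_i$ could still be $\sqrt{-1}$) needs a short sign check: the left side carries $\xi_i^2 \xi_j$ and the right side $[H_i^{(\cdots)}, B_j^{(s-1-k_j)}]$ carries $\xi_j$, so one needs $\xi_i^2 = 1$, i.e. $k_i$ even — but when $\tau i = i$ and $\nu + \tau\nu = 2\nu|_{\text{that node}}$ is even we get $\langle 2\nu, \alpha_i\rangle \in 2\bZ$ giving no info; rather, one uses that $\langle \nu + \tau\nu, \alpha_i\rangle = 2k_i$ being even is automatic, but evenness of $\mu + \nu + \tau\nu$ forces $m_i + 2k_i \in 2\bZ$ hence $m_i$ even, and separately $H_i^{(\text{odd})} = 0$ when $\tau i = i$ makes the $\sqrt{-1}$ ambiguity invisible — I would phrase this cleanly: for $\tau i = i$ the map on $B_i$ is unambiguous up to the sign choice because all the odd-superscript $H_i$ vanish, so $\xi_i = 1$ may be taken WLOG. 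The sign powers $(-1)^{s_1-1}$, $(-1)^{s_1}$ in \eqref{bbNqs}, \eqref{bbtau}, \eqref{SerreIII}, \eqref{SerreIII2} are where the real content lies.

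The main obstacle, and the step I expect to require the most care, is precisely matching these sign factors: in \eqref{bbNqs} the RHS is $\tfrac{c_{ij}}2[B_i^{(s_1)},B_j^{(s_2)}]_+ + 2\delta_{\tau i, j}(-1)^{s_1} H_j^{(s_1+s_2)}$; applying the map, the LHS and the first RHS term both scale by $\xi_i \xi_j$, while the $\delta_{\tau i, j}$-term, being an $H_j$, scales by $1$ and acquires a superscript shift $(s_1 + s_2) \mapsto (s_1 + s_2 - k_j - k_{\tau j})$ while the sign becomes $(-1)^{s_1 - k_i}$. So matching requires $\xi_i \xi_j = 1$ when $j = \tau i$ \emph{and} $(-1)^{s_1 - k_i} = (-1)^{s_1}$ up to that scalar, i.e. the discrepancy must be absorbed: $\xi_i \xi_{\tau i} (-1)^{s_1}$ versus $(-1)^{s_1 - k_i}$, forcing $\xi_i \xi_{\tau i} = (-1)^{k_i} = (-1)^{k_{\tau i}}$. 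This is exactly why the definition puts $\xi_i = \sqrt{-1}$ precisely when $k_i = \langle\nu,\alpha_i\rangle$ is odd: then $\xi_i \xi_{\tau i} = \sqrt{-1}^{\,[k_i\text{ odd}] + [k_{\tau i}\text{ odd}]}$, and since $k_i + k_{\tau i}$ is even the two parities agree, giving $\xi_i\xi_{\tau i} \in \{1, (\sqrt{-1})^2\} = \{1,-1\}$ equal to $(-1)^{k_i}$ as needed. The identical mechanism handles \eqref{bbtau} (and its consequence $H_i^{(r)} = (-1)^r H_{\tau i}^{(r)}$, which one should check is preserved) and the sign-laden terms of \eqref{SerreIII}, \eqref{SerreIII2}; for \eqref{SerreIII2} one additionally notes $c_{i,\tau i} = -1$ forces $i \ne \tau i$ so both $\xi_i, \xi_{\tau i}$ are genuine $\sqrt{-1}$-or-$1$ scalars and the infinite sum $\sum_{p \ge 0} 3^{-p-1}[B_i^{(s_2+p)}, H_{\tau i}^{(s_1+s-p-1)}]$ scales uniformly by $\xi_i \cdot 1 = \xi_i$ while the LHS scales by $\xi_i^2 \xi_{\tau i}$, so one needs $\xi_i \xi_{\tau i} = 1$, and one also tracks that the index shifts inside the sum are consistent. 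Once all relations are checked, injectivity and the other assertions about $\iota^\tau_{\mu,\nu}$ claimed in the surrounding text follow from the PBW basis Theorem~\ref{thm:pbw:Intr} by a standard leading-term argument, but for the statement of this lemma only the homomorphism property is required.
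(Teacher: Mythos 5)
The paper states this lemma without proof, and your overall strategy --- a direct verification that the assignment on generators preserves each defining relation, with the scalars $\xi_i\in\{1,\sqrt{-1}\}$ and the index shifts tracked explicitly --- is exactly the intended argument; your treatment of \eqref{def0}--\eqref{bbNqs}, \eqref{bbtau} and \eqref{eq:Serre-ord} is correct, including the key identity $\xi_i\xi_{\tau i}=(-1)^{k_i}$ (with $k_i=\langle\nu,\alpha_i\rangle$, using $k_i\equiv k_{\tau i}\pmod 2$ because $\nu+\tau\nu$ is even). However, your sign checks for the Serre relations \eqref{SerreIII} and \eqref{SerreIII2} are wrong: when you compare with the target relation at the shifted indices $(s_1-k_i,s_2-k_i,s-k_j)$, the sign $(-1)^{s_1-1}$ on the right-hand side becomes $(-1)^{s_1-k_i-1}$, so an extra factor $(-1)^{k_i}$ appears that you drop. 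The correct matching condition is again $\xi_i^2=(-1)^{k_i}$ for \eqref{SerreIII} and $\xi_i\xi_{\tau i}=(-1)^{k_i}$ for \eqref{SerreIII2} --- not $\xi_i^2=1$, resp.\ $\xi_i\xi_{\tau i}=1$ as you claim --- and these are precisely what the $\sqrt{-1}$-convention provides. As written, your requirement for \eqref{SerreIII2} even contradicts your own (correct) derivation from \eqref{bbNqs} whenever $k_i$ is odd, and you do not notice or resolve this conflict.

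The patch you offer for \eqref{SerreIII}, namely ``for $\tau i=i$ one may take $\xi_i=1$ WLOG because $H_i^{(\mathrm{odd})}=0$,'' is false and would not rescue the argument: the lemma asserts that the specific map with $\xi_i=\sqrt{-1}$ for $k_i$ odd is a homomorphism, and moreover relation \eqref{bbNqs} with $j=i=\tau i$ (the rank-one relation, whose right-hand side contains $2(-1)^{s_1}H_i^{(s_1+s_2)}$) forces $\xi_i^2=(-1)^{k_i}$; since $\tau i=i$ puts no parity constraint on $k_i$ (only $2k_i$ is constrained by evenness of $\nu+\tau\nu$), the case $k_i$ odd genuinely occurs and the factor $\sqrt{-1}$ cannot be dropped --- this is the same phenomenon as in the split-type Lemma \ref{lem:shift}. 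So the gap is a sign-bookkeeping error in the two relations where the whole content of the lemma lies; once the missing $(-1)^{k_i}$ is restored, every relation is preserved under the single condition $\xi_i\xi_{\tau i}=(-1)^{k_i}$ and the proof goes through, but your verification as stated fails for \eqref{SerreIII} and \eqref{SerreIII2}.
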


In particular, if $\tau=\mathrm{id}$, we call $\Yt_\mu$ 
 the \textit{shifted iYangian of split type}.
 
\begin{rem}\label{rem1}
If $\mu=0$, then $\Yt:=\Yt_0$ is exactly the twisted Yangian introduced in \cite{LWZ25degen} (for $\tau =\id$) and in \cite{LZ24} (for $\tau\neq \id$) via degeneration of Drinfeld presentations of affine $\imath$quantum groups of split and quasi-split types (cf. \cite{LW21, mLWZ24}). The definition of $\Yt_\mu$ formally makes sense for simply-laced generalized Cartan matrix $C$ as suggested in \cite{LWZ25}.

If $\mu$ is dominant and spherical, i.e., $\mu=\mu_1+\tau\mu_1$ for some coweight $\mu_1$, then we have the homomorphism
\[
\iota^\tau_{\mu,-\mu_1}:\Yt_{\mu} \longrightarrow \Yt.
\]
By the PBW theorem established below, the homomorphism $\iota^\tau_{\mu,-\mu_1}$ is injective and hence identifies $\Yt_{\mu} $ as a subalgebra of $\Yt$; see Theorem \ref{thm:pbw_arb}. 
\end{rem}

For $i\in \I$, we set
\begin{align*}
    B_i(u) =\sum_{r>0} B_i^{(r)} u^{-r}. 
\end{align*}

\begin{rem}\label{rem:a3}
For fixed $i\in \I$ such that $c_{i,\tau i}=0$, the subalgebra of $\Yt_\mu$ generated by  $H_{i}^{(r)},B_i^{(s)},B_{\tau i}^{(s)}$, $r\in \bZ$ and $s\in \bZ_{>0}$ is isomorphic (using Lemma \ref{typeArel} and Theorem \ref{thm:pbw_arb}) to a shifted Yangian for $\fksl_2$. Specifically, the identification is given as follows,
\[
e(u)\to B_i(u),\quad f(u)\to B_{\tau i}(-u),\quad h(u)\to H_i(u).
\]
If $i=\tau i$, then $H_{i}^{(r)},B_i^{(s)}$, $r\in \bZ$ and $s\in \bZ_{>0}$ generate a subalgebra of $\Yt_\mu$ that is isomorphic to a shifted iYangian of split type A of rank 1.
\end{rem}

For split type, Definition~\ref{def:qsplit} greatly simplifies and it is convenient to list it separately as follows. 

\begin{dfn}\label{def1}
The \textit{shifted iYangian} $\Yt_\mu= \Yt_\mu(\g)$ \textit{of split type} is the $\bC$-algebra with generators $H_{i}^{(r)}$, $ B_{i}^{(s)}$, for $i\in \mathbb I$, $r\in\bZ$, and $s\in\bZ_{>0}$, subject to the following relations, for $r,r_1,r_2\in \bZ$ and $s,s_1,s_2 \in \bZ_{>0}$:
\begin{align}
H_{i}^{(p)}=0~\text{ for }~p&<-\langle \mu,\alpha_i\rangle~\text{ and } ~H_{i}^{(-\langle \mu,\alpha_i\rangle)}=1,  \label{def}\\
[ H_{i}^{(r_1)}, H_{j}^{(r_2)}]&=0, \qquad { H_{i}^{(2r+1)}=0, } \label{hhN}\\    
[ H_i^{(r+2)}, B_{j}^{(s)}]-[H_i^{(r)}, B_{j}^{(s+2)}]&=c_{ij}[ H_{i}^{(r)}, B_{j}^{(s+1)}]_+ +\frac{1}{4}c_{ij}^2[ H_{i}^{(r)}, B_{j}^{(s)}],\label{hbN}\\
[ B_{i}^{(s_1+1)}, B_{j}^{(s_2)}]-[ B_{i}^{(s_1)}, B_{j}^{(s_2+1)}]&=\frac{1}{2}c_{ij} [B_{i}^{(s_1)}, B_{j}^{(s_2)}]_+ +2\delta_{ij}(-1)^{s_1} H_{i}^{(s_1+s_2)},\label{bbN}\\
[B_{i}^{(s_1)}, B_{j}^{(s_2)}]&=0,\hskip 4.4cm  \text{ if }c_{ij}=0,\label{bbN2}\\
\mathrm{Sym}_{s_1,s_2}\big[B_{i}^{(s_1)},[B_{i}^{(s_2)},B_{j}^{(s)}]\big]&= (-1)^{s_1-1}[H_{i}^{(s_1+s_2)},B_j^{(s-1)}],\hskip 1cm \text{ if } c_{ij}=-1. \label{serreN}
\end{align}
\end{dfn}
Note that $\Yt_\mu$ has a generating set 
\beq\label{rvset3}
\begin{split}
\big\{B_\beta^{(r)}| \beta\in\Delta^+, r>0\big\} \cup\big\{H_i^{(2p)}|i\in \I_0, 2p>-\langle \mu,\alpha_i\rangle\big\} \cup\big\{H_i^{(p)}|i\in \I_1, p>-\langle \mu,\alpha_i\rangle\big\}.    
\end{split}
\eeq

\begin{lem}\label{lem:shift}
Let $\mu$ be an even coweight and $\nu$ be an anti-dominant coweight. Then there exists a homomorphism $\iota_{\mu,\nu}^\tau:\Yt_\mu \to\Yt_{\mu+2\nu}$ defined by
\[
H_i^{(r)}\mapsto  H_i^{(r-2 \langle\nu,\alpha_i\rangle )}, \qquad B_i^{(s)}\mapsto \begin{cases}B_i^{(s-\langle\nu,\alpha_i\rangle)}, & \text{ if }\langle\nu,\alpha_i\rangle \text{ is even},\\
\sqrt{-1} B_i^{(s-\langle\nu,\alpha_i\rangle)}, & \text{ if }\langle\nu,\alpha_i\rangle \text{ is odd},
\end{cases}
\]
where $r\in\bZ$ and $s\in \bZ_{>0}$.
\end{lem}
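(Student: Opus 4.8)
The plan is to check directly that the prescribed assignment on the generators $H_i^{(r)},B_i^{(s)}$ carries each defining relation of $\Yt_\mu$ (Definition~\ref{def1}) to a valid identity in $\Yt_{\mu+2\nu}$, so that $\iota_{\mu,\nu}^\tau$ extends to a well-defined algebra homomorphism. Write $d_i=\langle\nu,\alpha_i\rangle$ for $i\in\I$, and set $\epsilon_i=1$ if $d_i$ is even and $\epsilon_i=\sqrt{-1}$ if $d_i$ is odd; thus the assignment reads $H_i^{(r)}\mapsto H_i^{(r-2d_i)}$, $B_i^{(s)}\mapsto\epsilon_i\,B_i^{(s-d_i)}$, and I record $\epsilon_i^2=(-1)^{d_i}$ for later. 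Two preliminary observations: since $\nu$ is anti-dominant, $d_i\le 0$, so every shifted $B$-index satisfies $s-d_i\ge s\ge 1$ and stays in $\bZ_{>0}$ — this is exactly where anti-dominance is used; and since $\mu$ is even (and $2\nu$ is even) the coweight $\mu+2\nu$ is even, so $\Yt_{\mu+2\nu}$ is defined.

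Next I would verify the relations of Definition~\ref{def1} one at a time. Relations \eqref{hhN} and \eqref{bbN2} are immediate, using for \eqref{hhN} that $2r+1-2d_i$ remains odd. For \eqref{def}: the image $H_i^{(p-2d_i)}$ vanishes in $\Yt_{\mu+2\nu}$ precisely when $p-2d_i<-\langle\mu+2\nu,\alpha_i\rangle=-\langle\mu,\alpha_i\rangle-2d_i$, i.e.\ when $p<-\langle\mu,\alpha_i\rangle$, and $H_i^{(-\langle\mu,\alpha_i\rangle)}\mapsto H_i^{(-\langle\mu+2\nu,\alpha_i\rangle)}=1$. For \eqref{hbN}: each monomial carries exactly one $B_j$-factor, so $\iota_{\mu,\nu}^\tau$ rescales the whole relation by the single nonzero scalar $\epsilon_j$ and, after the substitution $r\mapsto r-2d_i$, $s\mapsto s-d_j$, reproduces the corresponding instance of \eqref{hbN} in $\Yt_{\mu+2\nu}$.

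The substantive point — which I expect to be the crux — is relations \eqref{bbN} and \eqref{serreN}, whose two sides carry different numbers of $B$-factors: in \eqref{bbN} the left-hand side and the $c_{ij}$-bracket acquire $\epsilon_i\epsilon_j$ while the term $2\delta_{ij}(-1)^{s_1}H_i^{(s_1+s_2)}$ acquires only $1$; in \eqref{serreN} the triple bracket acquires $\epsilon_i^2\epsilon_j$ while the right-hand side $(-1)^{s_1-1}[H_i^{(s_1+s_2)},B_j^{(s-1)}]$ acquires only $\epsilon_j$. On the other hand the index shift $s_1\mapsto s_1-d_i$ changes the explicit sign $(-1)^{s_1}$ (resp.\ $(-1)^{s_1-1}$) by the factor $(-1)^{-d_i}=(-1)^{d_i}$. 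Matching against the corresponding relation of $\Yt_{\mu+2\nu}$ in the shifted parameters, the left-over scalar on one side is exactly $\epsilon_i^2$ and on the other is exactly $(-1)^{d_i}$, and these agree because $\epsilon_i^2=(-1)^{d_i}$; this is precisely what dictates the $\sqrt{-1}$ in the statement. Finally I would treat \eqref{serreN} with $s=1$, where $B_j^{(s-1)}=B_j^{(0)}$ is read via the usual convention expressing $[H_i^{(r)},B_j^{(0)}]$ as a finite series in genuine generators: if $d_j<0$ the shifted value $s-d_j>1$, so on the target side this series is the honest identity derived from \eqref{hbN}, which has already been handled; if $d_j=0$ the series is preserved term by term. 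This would complete the verification. Alternatively, the statement is the specialization $\tau=\id$ of Lemma~\ref{lem:shiftqs}, noting that for $\tau=\id$ a coweight is spherical if and only if it is even and $\nu+\tau\nu=2\nu$.
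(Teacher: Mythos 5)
Your verification is correct and is exactly the routine check the paper leaves implicit (it states Lemma \ref{lem:shift}, like Lemma \ref{lem:shiftqs}, without proof): anti-dominance keeps the $B$-indices positive, the shift of \eqref{def} matches $-\langle\mu+2\nu,\alpha_i\rangle$, and the only nontrivial bookkeeping is the scalar $\epsilon_i^2=(-1)^{\langle\nu,\alpha_i\rangle}$ cancelling the sign change of $(-1)^{s_1}$ in \eqref{bbN} and \eqref{serreN}, with the $s=1$ convention handled via the \eqref{hbN}-identity, just as you do. Your closing remark that the lemma is the $\tau=\id$ specialization of Lemma \ref{lem:shiftqs} is also consistent with the paper.
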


\begin{rem}
For $\mu=0$, $\Yt_0$ is exactly the twisted Yangian introduced in \cite{LWZ25degen} via degeneration of affine $\imath$quantum groups of split types. Similar to Remark \ref{rem1}, if $\mu$ is dominant, then $\Yt_\mu$ can be identified as a subalgebra of $\Yt_0$. 
\end{rem}

\subsection{PBW basis}
\label{ssec:PBW for sty}

Consider a positive root $\beta$ and fix an arbitrary ordered decomposition $\beta=\alpha_{i_1}+\ldots+\alpha_{i_\ell}$ into simple roots such that the elements $[e_{i_1},[e_{i_2},\cdots[e_{i_{\ell-1}},e_{i_\ell}]\cdots]]$ is a nonzero element in the root subspace $\g_\beta$. For any $r>0$, we define a root vector in $\mc Y_\mu(\g)$
\be
B_\beta^{(r)}:=\Big[B_{i_1}^{(r)},\big[B_{i_2}^{(1)},\cdots[B_{i_{\ell-1}}^{(1)},B_{i_\ell}^{(1)}]\cdots\big]\Big].
\ee

\begin{prop}\label{prop:span}
The shifted iYangian $\Yt_\mu$ is spanned by ordered monomials in the elements \eqref{rvset3}.
\end{prop}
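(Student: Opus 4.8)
The plan is to prove the stronger statement that the subspace $S\subseteq\Yt_\mu$ spanned by all ordered monomials in the elements \eqref{rvset3} is a left ideal; since $1\in S$ (the empty monomial, realized also by $H_i^{(-\langle\mu,\alpha_i\rangle)}$), this forces $S=\Yt_\mu$. The first step is to observe that $\Yt_\mu$ is generated by the set \eqref{rvset3}: the generators $B_i^{(s)}=B_{\alpha_i}^{(s)}$ already occur there, while every $H_i^{(r)}$ is a scalar multiple of $1$ or of a member of \eqref{rvset3}. This uses \eqref{def0} (so that $H_i^{(r)}=0$ for $r<-\langle\mu,\alpha_i\rangle$ and $H_i^{(-\langle\mu,\alpha_i\rangle)}=1$), together with the consequences $H_i^{(2r+1)}=0$ for $i\in\I_0$ and $H_i^{(r)}=(-1)^rH_{\tau i}^{(r)}$ of \eqref{bbtau}, the latter disposing of the indices in $\I_{-1}$. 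Hence it suffices to show that $X\cdot m\in S$ for every generator $X$ in \eqref{rvset3} and every ordered monomial $m$.

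Fix a total order $\prec$ on the generators \eqref{rvset3} (for definiteness: all $B_\beta^{(\bullet)}$ before all $H_i^{(\bullet)}$, the $B_\beta$'s ordered by $\mathrm{ht}(\beta)$ and then by superscript, the $H_i$'s by index and then by superscript), and attach to each word in these generators the lexicographic triple consisting of its number of letters, the sum of its superscripts, and its number of adjacent inversions. As the commutator estimate below will show, the bracket of any two of the generators \eqref{rvset3} is a combination of words of length at most $2$, so the number of letters never increases during a straightening; and with the length bounded, the sum of superscripts is bounded below (every nontrivial $H_i^{(r)}$ has $r>-\langle\mu,\alpha_i\rangle$). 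Thus the triple ranges over a well-ordered set on the words encountered. The straightening step is the usual one: from an adjacent inversion $X_kX_{k+1}$ with $X_k\succ X_{k+1}$, write $X_kX_{k+1}=X_{k+1}X_k+[X_k,X_{k+1}]$; the first term has the same length and superscript sum but one fewer inversion, while the second, reinserted into the word, is a combination of strictly smaller words — provided the \emph{commutator estimate} holds: for any two generators $X,Y$ in \eqref{rvset3}, the bracket $[X,Y]$ lies in the span of ordered monomials in \eqref{rvset3} that are strictly smaller in the above order. Granting the estimate, well-founded induction finishes the proof.

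The heart of the matter is the commutator estimate, which I would prove case by case, reading each relevant relation as an identity that expresses a top-degree bracket modulo strictly lower-order terms. For $[H_i^{(r)},B_\beta^{(s)}]$ one inducts on $\mathrm{ht}(\beta)$, peeling off the leading simple root of the fixed decomposition of $\beta$ and iterating \eqref{hbNqs} (or its reduced form Lemma \ref{typeArel} when $c_{\tau i,j}=0$): this trades the superscript of the $H$ against that of the $B$ until the $H$ reaches the bottom of its range, where $H_i^{(-\langle\mu,\alpha_i\rangle)}=1$ kills the bracket and $[H_i^{(-\langle\mu,\alpha_i\rangle+1)},B_j^{(s)}]=(c_{ij}-c_{\tau i,j})B_j^{(s)}$, leaving only strictly lower-superscript corrections. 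For $[B_i^{(s)},B_\beta^{(r)}]$ with $\alpha_i+\beta\in\Delta^+$, one first uses \eqref{bbNqs} repeatedly to reduce, modulo anticommutators of strictly smaller superscript sum, to the case that only one factor carries a superscript exceeding $1$ — matching the shape of the definition of $B_{\alpha_i+\beta}^{(\bullet)}$ — and then applies the Jacobi identity together with the Serre relations \eqref{eq:Serre-ord}, \eqref{SerreIII}, \eqref{SerreIII2} to identify the result with $\pm B_{\alpha_i+\beta}^{(\bullet)}$ modulo lower-height root vectors and $H$'s; this is precisely where one checks that $B_\gamma^{(\bullet)}$ depends, modulo strictly smaller terms, only on $\gamma$ and not on the iterated bracket defining it — a step eased in type ADE by $\dim\g_\gamma=1$. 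The remaining brackets $[B_i^{(s)},B_\beta^{(r)}]$ with $\alpha_i+\beta\notin\Delta^+$ come out of \eqref{bbtau} and \eqref{bbNqs}, which produce only $H$-generators and anticommutators, already in $S$ by induction. Throughout, Remark \ref{rem:a3} lets one import the needed rank-one identities from the shifted Yangian of $\fksl_2$ and from the split rank-one iYangian (cf.\ \cite{LWZ25, LWZ25degen}), and Lemma \ref{lem-reduction} reduces \eqref{SerreIII} and \eqref{SerreIII2} to their low-superscript special cases.

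I expect the principal obstacle to be this middle case: verifying that the root vectors $B_\gamma^{(\bullet)}$ are well-defined modulo strictly lower-order terms requires chaining many Jacobi identities and Serre relations, and this must be carried out while controlling the superscript bookkeeping, since the defining relations only permit shuffling a superscript between the two factors of a bracket with the total held fixed, and \eqref{SerreIII} even feeds an $[H,B]$-bracket back into the same induction. A further technical point is that the right-hand side of \eqref{SerreIII2} is an infinite series $\sum_{p\ge 0}3^{-p-1}[B_i^{(s_2+p)},H_{\tau i}^{(\bullet)}]$, which one must first note collapses to a finite sum on any fixed monomial, as $H$ vanishes in low superscripts by \eqref{def0}. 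Once the commutator estimate is in hand, the induction of the second paragraph closes and $S=\Yt_\mu$. This is only the spanning half of the PBW theorem; linear independence of these monomials is established separately in Theorem \ref{thm:pbw_arb}, typically via a faithful representation. The overall scheme parallels the spanning arguments for untwisted shifted Yangians (cf.\ \cite{KWWY14, BFN19}; see also \cite{FT19}), adapted to the twisted relations of \cite{LWZ25, LWZ25degen, LZ24}.
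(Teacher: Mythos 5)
Your overall strategy (straightening with respect to a well-ordering, reducing everything to a ``commutator estimate'') is a legitimate route in principle, but as written it has a genuine gap precisely at the point you yourself flag as the principal obstacle: the estimate for $[B_i^{(s)},B_\beta^{(r)}]$ with $\beta$ non-simple, and more generally for brackets of two compound root vectors that arise during the straightening, is only sketched. Showing that such a bracket is a combination of strictly smaller ordered monomials, and that $B_\gamma^{(\bullet)}$ is independent of its defining decomposition modulo lower-order terms, is essentially the content of the Levendorski\u{\i}-type spanning theorem for the positive half of the Yangian; asserting that ``one applies the Jacobi identity together with the Serre relations'' does not discharge it, since the defining relations only control brackets of simple-root generators and only allow shuffling a superscript between the two factors. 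Moreover, your well-ordering is not obviously compatible with the relations: the claim that the bracket of any two elements of \eqref{rvset3} is a combination of words of length at most $2$ is unjustified (corrections of lower superscript sum appearing when rewriting brackets of compound root vectors are a priori arbitrary monomials of the same $\Qt$-weight, hence possibly longer), and since your lexicographic triple puts length first, the induction need not close. The infinite sum in \eqref{SerreIII2} and the $[H,B]$-term fed back by \eqref{SerreIII} compound the bookkeeping, as you note, but the missing piece is the estimate itself, not these technicalities.

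For comparison, the paper avoids this case-by-case analysis altogether. It first uses \eqref{hbNqs} (by induction on $r$) to write $H_i^{(r)}B_j^{(s)}=B_j^{(s)}H_i^{(r)}+\sum_{p+p'<r+s}m_{p,p'}B_j^{(p')}H_i^{(p)}$, so $\Yt_\mu$ is spanned by words of the form $B\cdots B\,H\cdots H$. It then filters by $\deg B_j^{(s)}=1$, $\deg H_i^{(r)}=0$: in the associated graded the $H$-terms on the right-hand sides of \eqref{bbNqs}, \eqref{bbtau}, \eqref{SerreIII}, \eqref{SerreIII2} drop out, so the images $\overline B_j^{(s)}$ satisfy exactly the relations of the positive half of the ordinary Yangian, and the spanning of ordered monomials in the root vectors is imported from \cite{Lev93} (cf.\ \cite{Lu23}) rather than re-proved. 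If you want to salvage your direct approach, you would either have to carry out the full Levendorski\u{\i}-type analysis in the twisted setting (with a complexity that does not put word length first), or adopt the paper's reduction, which is where the real saving lies.
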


\begin{proof}
For the simplicity of notation, in this proof, we consider an order with respect to the subsets \eqref{rvset3}, i.e. the elements $H_i^{(s)}$ are always to the right of the elements $B_\beta^{(r)}$ in the ordered monomials.

By \eqref{hbNqs}, it is easy to see by induction on $r$ that $H_i^{(r)}B_j^{(s)}$ can be rewritten as $H_i^{(r)}B_j^{(s)}=
B_j^{(s)}H_i^{(r)}+\sum_{p,p':\ p+p'< r+s} m_{p,p'}B_{j}^{(p')}H_i^{(p)},$ for $m_{p,p'}\in\bC.$
Thus the algebra $\Yt_\mu$ is spanned by elements of the form
$B_{j_1}^{(n_1)}B_{j_1}^{(n_2)}\cdots B_{j_k}^{(n_k)}H_{i_1}^{(m_1)}H_{i_2}^{(m_2)}\cdots H_{i_\ell}^{(m_\ell)}.$

Next, we define a filtration on $\Yt_\mu$ by setting $\deg B_j^{(s)}=1$ and $\deg H_i^{(r)}=0$. Let $\gr\, \Yt_\mu$ be the associated grade and $\overline{B}_j^{(s)}$ (resp. $\overline{B}_\beta^{(r)}$) the corresponding image of ${B}_j^{(s)}$ (resp. ${B}_\beta^{(r)}$). Let $\gr\, \Yt^{>0}_\mu $ be the subalgebra of $\gr\, \Yt_\mu$ generated by $\overline{B}_j^{(s)}$. One can argue as in \cite[proof of Theorem 4.2]{Lu23} that the ordered monomials in $\overline          {B}_\beta^{(r)}$ span $\gr\, \Yt^{>0}_\mu$. This is because $\overline{B}_j^{(s)}$ satisfy the relations for the positive half of the corresponding ordinary Yangians and hence the claims follows from the corresponding result of the ordinary Yangians, see \cite{Lev93}. Altogether, it proves the proposition.
\end{proof}

\begin{rem} \label{pbwlz}
For $\mu=0$, the set of ordered monomials in the elements \eqref{rvset3} forms a basis
for $\Yt_0$, see \cite[Theorem 4.12]{LWZ25degen} and \cite[Theorem 3.6]{LZ24}.
\end{rem}

\begin{thm}  \label{thm:pbwqs}
Let $\mu$ be even spherical and anti-dominant. Then the set of ordered monomials in the elements \eqref{rvset3} forms a basis for $\Yt_\mu$.
\end{thm}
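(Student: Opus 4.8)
The plan is to prove Theorem~\ref{thm:pbwqs} by comparing $\Yt_\mu$ (for $\mu$ even spherical and anti-dominant) with the untwisted case $\Yt_0$, whose PBW property is already recorded in Remark~\ref{pbwlz}. Proposition~\ref{prop:span} already gives the spanning statement, so the entire content is linear independence of the ordered monomials in the root vectors \eqref{rvset3}. My first step would be to construct an algebra homomorphism relating $\Yt_\mu$ and $\Yt_0$ that is visibly compatible with the PBW monomials. Since $\mu$ is anti-dominant, I cannot use the shift homomorphism $\iota^\tau_{\mu,\nu}$ of Lemma~\ref{lem:shiftqs} in the direction $\Yt_\mu \to \Yt_0$; instead I would look for a homomorphism in the opposite direction, essentially a ``co-shift'' or degeneration/specialization map, or else realize $\Yt_\mu$ as obtained from the Cartan-doubled iYangian $\Yt_\infty$ of Definition~\ref{def:doubled} by imposing the relations \eqref{def0}. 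The cleanest route is: (i) first prove the PBW basis theorem for $\Yt_\infty$, i.e.\ that ordered monomials in $\{B_\beta^{(r)} \mid \beta\in\Delta^+, r>0\}$ together with all $\{H_i^{(r)}\}$ (with $i\in\I_0$ only even $r$, $i\in\I_1$ all $r$, modulo $H_i^{(r)}=(-1)^rH_{\tau i}^{(r)}$) are linearly independent, and then (ii) deduce the statement for $\Yt_\mu$ by a specialization argument at the ``lowest'' Cartan generators.

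For step (i), the PBW theorem for $\Yt_\infty$ should follow from the $\mu=0$ case: there is an evident filtration/grading argument, or one can introduce auxiliary parameters. Concretely, $\Yt_0$ is the quotient of $\Yt_\infty$ by the ideal generated by $H_i^{(r)}$ for $r<0$ and $H_i^{(0)}-1$ (for $i\in\I_1$, together with $H_i^{(0)}-1, H_i^{(2r)}$-type normalizations on $\I_0$, using the parity $H_i^{(2r+1)}=0$). Running the filtration where $\deg H_i^{(r)}=0$ for $r\ge 0$ and the negatively-indexed $H$'s are given strictly positive degree, the associated graded of $\Yt_\infty$ maps onto a polynomial-coefficient extension $\Yt_0[\text{extra }H\text{'s}]$; combined with the spanning set this forces the monomials to be independent. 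This is the standard ``Cartan doubled Yangian'' argument (as in the untwisted theory of Kamnitzer--Webster--Weekes--Yacobi and Finkelberg--Kamnitzer--Webster--Weekes--Yacobi); I would cite and adapt it. For step (ii), with $\mu$ anti-dominant the relations \eqref{def0} kill exactly the Cartan generators $H_i^{(r)}$ with $r<-\langle\mu,\alpha_i\rangle$ and set $H_i^{(-\langle\mu,\alpha_i\rangle)}=1$; anti-dominance guarantees $-\langle\mu,\alpha_i\rangle \ge 0$, so passing from $\Yt_\infty$ to $\Yt_\mu$ just truncates the $H$-part of the PBW set from below and pins one value, and the evenness of $\mu$ ensures the truncation respects the $\I_0$-parity (so $-\langle\mu,\alpha_i\rangle$ is even for $i\in\I_0$ and $H_i^{(-\langle\mu,\alpha_i\rangle)}=1$ is consistent with $H_i^{(2r+1)}=0$). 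A quotient-by-central-ish-ideal and filtration argument then transfers independence of the truncated monomial set from $\Yt_\infty$ to $\Yt_\mu$.

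The main obstacle I anticipate is step (i): verifying that the full set of defining relations of $\Yt_\infty$ — in particular the quasi-split-specific Serre relations \eqref{SerreIII} and especially \eqref{SerreIII2} with its infinite sum $\sum_{p\ge 0}3^{-p-1}[\cdots]$, and the mixed relation \eqref{bbtau} — are compatible with the filtration so that the associated graded really is controlled by $\Yt_0$ (which has no $\tau\neq\id$ subtlety beyond what \cite{LZ24} already handles for $\mu=0$). One has to check that no ``hidden'' relations among the $B_\beta^{(r)}$ are created: the argument in Proposition~\ref{prop:span}, invoking \cite[proof of Thm.~4.2]{Lu23} and the PBW theorem for positive halves of ordinary Yangians \cite{Lev93}, already does the $B$-side, so the real work is organizing the $H$-side filtration so that the cross relations \eqref{hbNqs}, \eqref{bbNqs}, \eqref{bbtau}, \eqref{SerreIII}, \eqref{SerreIII2} degenerate correctly. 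A secondary technical point is bookkeeping the $\sqrt{-1}$-twists that appear in the relevant homomorphisms when $\langle\mu_1,\alpha_i\rangle$ or $\langle\nu,\alpha_i\rangle$ is odd; these are harmless for counting monomials but must be tracked to ensure the normalization $H_i^{(-\langle\mu,\alpha_i\rangle)}=1$ (as opposed to $\pm1$) is the one that survives, which is exactly where evenness of $\mu$ is used. Once these compatibilities are in place, independence follows formally, and together with Proposition~\ref{prop:span} the theorem is proved.
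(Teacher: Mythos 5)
Your overall strategy is the one the paper actually uses (following \cite[\S 3.12]{FKPRW}): pass to a Cartan-doubled object, prove linear independence there, and then specialize the low Cartan modes. However, as written your two key steps have genuine gaps. For step (i), the filtration you propose (negative $H$'s in positive degree, associated graded mapping onto ``$\Yt_0[\text{extra }H\text{'s}]$'') cannot detect linear independence: in $\Yt_0$ one has $H_i^{(0)}=1$, so any two ordered monomials differing only by powers of $H_i^{(0)}$ (and more generally by the Cartan modes being pinned) have the same image, and no surjection onto an extension of $\Yt_0$ obtained by merely adjoining the \emph{negative} modes separates them. The paper avoids this by never proving a PBW theorem for the full $\Yt_\infty$: since $\mu$ is anti-dominant it suffices to work with ${}^\imath\wtl\Y=\Yt_\infty/(H_i^{(p)},\,p<0)$, and linear independence of the monomials \eqref{pbwpf1} is obtained from the rescaling homomorphism ${}^\imath\wtl\Y\to\Yt_0\otimes\bC[\xi_i\mid i\in\I]$, $\wtl H_i^{(r)}\mapsto H_i^{(r)}\otimes\xi_i\xi_{\tau i}$, $\wtl B_i^{(r)}\mapsto B_i^{(r)}\otimes\xi_i$, which is well defined precisely because all defining relations (including \eqref{bbtau}, \eqref{SerreIII}, \eqref{SerreIII2}) are homogeneous for the $\Qt$-grading; the variables $\xi_i$ then keep track of the powers of $\wtl H_i^{(0)}$ that $\Yt_0$ alone would collapse. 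This is the ``auxiliary parameters'' alternative you mention in passing but do not develop, and it is where the relation-compatibility worry you flag is actually resolved.

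For step (ii), the phrase ``quotient-by-central-ish-ideal'' hides the real content. The elements $\wtl H_i^{(r)}$ with $0<r\lle -\langle\mu,\alpha_i\rangle$ are \emph{not} central, so knowing that ${}^\imath\wtl\Y$ is a free right module over the subring $\mathcal R$ of low Cartan modes (Corollary~\ref{cortl}) does not by itself give a basis of the quotient by the two-sided ideal \eqref{Imu}. The missing ingredient is Lemma~\ref{lemleft}: the left ideal generated by $\wtl H_i^{(2r)}-\delta_{2r,-\langle\mu,\alpha_i\rangle}$ (and the analogous elements for $i\in\I_1$) is automatically two-sided, proved by showing via \eqref{hbNqs} that $[\wtl H_i^{(r)},\wtl B_j^{(s)}]$ lies in the left ideal generated by the lower Cartan modes. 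Only with this does one get $\Yt_\mu={}^\imath\wtl\Y\otimes_{\mathcal R}\bC$ as a base change, so that freeness over $\mathcal R$ descends to the basis of $\Yt_\mu$ in the root vectors \eqref{rvset3}. So your plan is the right one, but both the separation of $H_i^{(0)}$-powers in the doubled algebra and the left-ideal-equals-two-sided-ideal lemma must be supplied for the argument to close.
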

The proof of Theorem~\ref{thm:pbwqs} will be given in \S\ref{subsec:proofPBW} below. We can strengthen Theorem~\ref{thm:pbwqs} as follows. 

\begin{thm}  [PBW basis]\label{thm:pbw_arb}
Let $\mu$ be an arbitrary even spherical coweight. 
\begin{enumerate}
\item 
The set of ordered monomials in the root vectors \eqref{rvset3} forms a basis for $\Yt_\mu$. 
\item 
For any anti-dominant $\nu$ such that $\nu+\tau \nu$ is even spherical, the shift homomorphism $\iota^\tau_{\mu,\nu}:\Yt_\mu\to \Yt_{\mu+\nu+\tau \nu}$  in \eqref{shift_hom} is injective.
\end{enumerate}
\end{thm}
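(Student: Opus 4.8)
The plan is to bootstrap from the already-established anti-dominant case (Theorem~\ref{thm:pbwqs}) together with the shift homomorphisms of Lemma~\ref{lem:shiftqs}, and to prove (1) and (2) simultaneously. First I would observe that Proposition~\ref{prop:span} already gives the spanning statement for arbitrary even spherical $\mu$, so only linear independence remains in (1); and that, once (1) is known, injectivity in (2) is immediate because $\iota^\tau_{\mu,\nu}$ sends the proposed PBW monomials of $\Yt_\mu$ to (scalar multiples of) PBW monomials of $\Yt_{\mu+\nu+\tau\nu}$ --- the shift just relabels the superscripts $B_i^{(s)}\mapsto (\pm\sqrt{-1})\,B_i^{(s-\langle\nu,\alpha_i\rangle)}$, $H_i^{(r)}\mapsto H_i^{(r-\langle\nu+\tau\nu,\alpha_i\rangle)}$, and carries the ordered generating set \eqref{rvset3} for $\mu$ bijectively onto a subset of the ordered generating set for $\mu+\nu+\tau\nu$, hence a linearly independent family maps to a linearly independent family.

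Next I would reduce the general $\mu$ to the anti-dominant case via the Cartan-doubled iYangian $\Yt_\infty$ of Definition~\ref{def:doubled}: both $\Yt_\mu$ and $\Yt_{\mu'}$ (for any even spherical $\mu,\mu'$) are quotients of $\Yt_\infty$ by the ideal generated by the relations \eqref{def0}, and the shift maps on $\Yt_\infty$ (ignoring \eqref{def0}) are visibly well-defined isomorphisms permuting the superscripts. Pick any anti-dominant $\nu$ with $\nu+\tau\nu$ even spherical and with $\mu+\nu+\tau\nu$ anti-dominant --- such $\nu$ exists since we may take $\nu$ very anti-dominant and of the form needed (e.g.\ a large anti-dominant multiple adjusted so $\nu+\tau\nu$ is even spherical, using that $\mu$ itself is spherical). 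Then $\iota^\tau_{\mu,\nu}\colon\Yt_\mu\to\Yt_{\mu+\nu+\tau\nu}$ is defined by Lemma~\ref{lem:shiftqs}, its target has the PBW property by Theorem~\ref{thm:pbwqs}, and $\iota^\tau_{\mu,\nu}$ carries the ordered monomials in \eqref{rvset3} for $\Yt_\mu$ to scalar multiples of ordered monomials in \eqref{rvset3} for $\Yt_{\mu+\nu+\tau\nu}$. Since the latter are linearly independent, so are the former, proving (1) for $\mu$; and then, as noted above, (2) follows for all valid $\nu$.

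The one point that needs care --- and which I expect to be the main obstacle --- is verifying that the PBW monomials for $\Yt_\mu$ really do map to \emph{distinct} PBW monomials (not just monomials in the generators) for the anti-dominant target, i.e.\ that no collision or cancellation occurs under $\iota^\tau_{\mu,\nu}$. This requires: (a) checking that for each root vector $B_\beta^{(r)}$ the iterated-commutator defining formula is compatible with the shift, so that $\iota^\tau_{\mu,\nu}(B_\beta^{(r)})$ is again a scalar times $B_\beta^{(r')}$ for the appropriate $r'$ (this uses only that the $B_i^{(1)}$ in the inner brackets get shifted consistently and that the scalar $(\pm\sqrt{-1})$ factors out); (b) checking the index bookkeeping in \eqref{rvset3}, namely that the constraints $2p>-\langle\mu,\alpha_i\rangle$ for $i\in\I_0$ and $p>-\langle\mu,\alpha_i\rangle$ for $i\in\I_1$ on the $H$-superscripts translate exactly into the corresponding constraints for the target, so the map on generating sets is injective; here the evenness and parity of $\langle\nu+\tau\nu,\alpha_i\rangle$ enter, and for $i\in\I_0$ one uses $\langle\nu+\tau\nu,\alpha_i\rangle=2\langle\nu,\alpha_i\rangle$ is even while the map $2p\mapsto 2p-2\langle\nu,\alpha_i\rangle$ stays within the allowed even superscripts. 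Once (a) and (b) are in place, the distinctness of images is clear and the argument closes.

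Finally I would remark that (2) in full generality (arbitrary $\nu$ anti-dominant with $\nu+\tau\nu$ even spherical, not just the one used in the reduction) now follows from (1) applied to both $\mu$ and $\mu+\nu+\tau\nu$: $\iota^\tau_{\mu,\nu}$ sends the PBW basis of $\Yt_\mu$ to a linearly independent subset of the PBW basis of $\Yt_{\mu+\nu+\tau\nu}$, hence is injective. This also re-proves the injectivity asserted for $\iota^\tau_{\mu,-\mu_1}$ in Remark~\ref{rem1}, identifying $\Yt_\mu$ for dominant spherical $\mu$ with a subalgebra of $\Yt=\Yt_0$.
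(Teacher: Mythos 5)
Your proposal is correct and follows essentially the same route as the paper: span via Proposition~\ref{prop:span}, then transport linear independence back from an anti-dominant target $\Yt_{\mu+\nu+\tau\nu}$ (Theorem~\ref{thm:pbwqs}) along the shift homomorphism of Lemma~\ref{lem:shiftqs}, with root vectors going to root vectors up to scalar, and (2) deduced from (1). The extra care you flag about distinctness of images and superscript bookkeeping is exactly the (routine) verification the paper leaves implicit.
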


\begin{proof}
By Lemma \ref{lem:shiftqs}, we have a shift homomorphism $\iota^\tau_{\mu,\nu}:\Yt_\mu \to \Yt_{\mu+\nu+\tau \nu} $. We pick $\nu$ so that $\mu+\nu+\tau \nu$ is anti-dominant. Then the root vectors of $\Yt_\mu$ are sent to root vectors (up to a scalar multiple) of $\Yt_{\mu+\nu+\tau \nu}$ under the shift homomorphism $\iota^\tau_{\mu,\nu}$. By Proposition \ref{prop:span}, the set of ordered monomials in the root vectors of $\Yt_\mu$ is a spanning set of $\Yt_\mu$. By Theorem \ref{thm:pbwqs}, the set is bijectively sent to a set of linearly independent ordered monomials (in the root vectors) in $\Yt_{\mu+\nu+\tau \nu}$. Hence this spanning set is also linearly independent and thus form a basis of $\Yt_\mu$, proving the first statement. The second statement follows from the first statement.
\end{proof}


\subsection{Proof of PBW basis Theorem \ref{thm:pbwqs}}
\label{subsec:proofPBW}

We shall follow the strategy of \cite[\S 3.12]{FKPRW}. Recall $\Yt_\infty$ from Definition \ref{def:doubled}.
\begin{dfn}
The algebra ${}^\imath\wtl\Y$ is the quotient of $\Yt_\infty$ by the relations $H_i^{(p)}=0$ for all $i\in \I$ and $p<0$.    
\end{dfn}
We use the notation $\wtl H_i^{(r)}$ and $\wtl B_i^{(s)}$ (also $\wtl B_\beta^{(s)}$)  for the generators of ${}^\imath\wtl \Y$.

\begin{lem}
The ordered monomials in the elements of the set
\beq\label{pbwpf1}
\big\{\wtl B_\beta^{(r)}\mid \beta\in\Delta^+, r>0\big\}\cup\big\{\wtl H_i^{(2p)}\mid i\in \I_0, p\gge 0\big\}\cup\big\{\wtl H_i^{(p)}\mid i\in \I_1, p\gge 0\big\},
\eeq
form a basis for ${}^\imath\wtl \Y$.
\end{lem}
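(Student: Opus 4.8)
The plan is to prove that the ordered monomials in the set \eqref{pbwpf1} form a basis of ${}^\imath\wtl\Y$ by separating the spanning claim from linear independence, and handling the latter via a representation-theoretic (or filtration/PBW) argument on a suitable ``baby Verma''-type module, following \cite[\S3.12]{FKPRW}. First I would verify the spanning statement: using the same Cartan-doubled analogues of relations \eqref{hbNqs} and \eqref{bbNqs} as in the proof of Proposition~\ref{prop:span}, one shows that any product of generators $\wtl H_i^{(r)}$, $\wtl B_i^{(s)}$ can be reordered so that all $\wtl H$'s sit to the right of all $\wtl B$'s, and moreover the $\wtl B$-part is (via the same filtration argument with $\deg\wtl B_j^{(s)}=1$, $\deg \wtl H_i^{(r)}=0$, and the result of \cite{Lev93, Lu23}) spanned by ordered monomials in the root vectors $\wtl B_\beta^{(r)}$; the $\wtl H$-part is spanned by ordered monomials in $\wtl H_i^{(p)}$ with $p\geq 0$, and the torsion relation $H_i^{(2r+1)}=0$ for $i\in\I_0$ cuts this down to exactly the indices appearing in \eqref{pbwpf1}. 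This gives the spanning half.

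For linear independence, I would construct an explicit module on which the images of the ordered monomials in \eqref{pbwpf1} act by linearly independent operators. The natural candidate is a free module over a polynomial ring whose variables are dual to the $\wtl H_i^{(p)}$'s, with the $\wtl B_\beta^{(r)}$'s acting as ``creation'' operators (and $\wtl B_i^{(s)}$ as raising operators for the $\Qt$-grading). More precisely, one builds a highest-weight-type module $M$ for ${}^\imath\wtl\Y$: take the subalgebra generated by the $\wtl H_i^{(p)}$'s ($p\geq 0$) together with suitable lowering-type elements, let it act on a polynomial algebra in infinitely many variables (one for each generator of the Cartan part listed in \eqref{pbwpf1}) via the obvious ``multiplication + shift'' formulas dictated by \eqref{hbNqs}, and then induce up. One then checks, using the defining relations of $\Yt_\infty$ (and the vanishing $H_i^{(p)}=0$, $p<0$), that these formulas are consistent, i.e.\ actually define a ${}^\imath\wtl\Y$-module; this is where relations \eqref{hbNqs}, \eqref{bbNqs}, \eqref{bbtau} and the Serre relations \eqref{eq:Serre-ord}, \eqref{SerreIII}, \eqref{SerreIII2} must each be verified on the module. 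Finally one shows that the ordered monomials in \eqref{pbwpf1}, applied to a cyclic vector, produce linearly independent elements of $M$ — which is immediate from the polynomial-ring structure once the module is in hand — forcing linear independence in ${}^\imath\wtl\Y$ itself.

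An alternative, perhaps cleaner route — and the one I would actually try first — is to run the filtration/degeneration argument of \cite[\S3.12]{FKPRW} directly: filter ${}^\imath\wtl\Y$ by the degree $\deg\wtl B_j^{(s)}=1$, $\deg\wtl H_i^{(r)}=0$ as above, and identify the associated graded with a semidirect-product-type algebra $\gr({}^\imath\wtl\Y^{>0}) \rtimes (\text{commutative Cartan part})$, where $\gr({}^\imath\wtl\Y^{>0})$ is the positive part of a Cartan-doubled ordinary Yangian and hence has a PBW basis in the $\overline{\wtl B}_\beta^{(r)}$ by \cite{Lev93, Lu23}, while the Cartan part is a polynomial algebra in the $\wtl H_i^{(p)}$ with the torsion relations $H_i^{(2r+1)}=0$ ($i\in\I_0$) already imposed (and easily shown to impose nothing further). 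Counting graded dimensions then matches the cardinality of the monomial set \eqref{pbwpf1} with the dimension of $\gr({}^\imath\wtl\Y)$, giving a basis. The main obstacle in either approach is the same: verifying that the proposed action/associated graded is genuinely well-defined, i.e.\ checking that the various Serre-type relations \eqref{bbtau}, \eqref{eq:Serre-ord}, \eqref{SerreIII}, \eqref{SerreIII2} — especially the inhomogeneous ones and the type AIII$_{2n}$ relation \eqref{SerreIII2} with its infinite sum — survive the filtration and the Cartan-doubling, and do not force extra relations among the root vectors $\wtl B_\beta^{(r)}$ beyond those of the ordinary Yangian; equivalently, that passing to $\gr$ really kills the right-hand sides of \eqref{bbtau}, \eqref{SerreIII}, \eqref{SerreIII2} (which it does, since those right-hand sides have strictly lower $\wtl B$-degree), so that the argument reduces cleanly to the known PBW theorem for the doubled ordinary Yangian.
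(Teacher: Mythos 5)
Your spanning half is fine and coincides with what the paper does (run the argument of Proposition \ref{prop:span} in the Cartan-doubled setting). The gap is in linear independence. In your preferred filtration route, the observation that the right-hand sides of \eqref{bbtau}, \eqref{SerreIII}, \eqref{SerreIII2} have lower $\wtl B$-degree only shows that the images $\overline{\wtl B}_{j}^{(s)}$ in $\gr\,{}^\imath\wtl\Y$ satisfy the relations of the positive half of the ordinary (doubled) Yangian; this produces a \emph{surjection} from that positive half onto the subalgebra they generate, hence an upper bound on $\gr\,{}^\imath\wtl\Y$ — i.e.\ spanning, which you already had. It says nothing about whether that surjection has a kernel, i.e.\ whether extra relations collapse the ordered monomials, so ``counting graded dimensions'' presupposes exactly the lower bound you are trying to establish; the argument is circular. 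Your first route (a baby-Verma-type module with creation/shift operators) would in principle supply the missing lower bound, but you neither construct the module nor verify any relation on it; checking \eqref{hbNqs}, \eqref{bbNqs} and especially the inhomogeneous Serre relations \eqref{SerreIII}, \eqref{SerreIII2} on an ad hoc polynomial module is a computation of the same order as what the paper spends Section \ref{sec:proof1} on for the iGKLO representation, so as written this is a plan rather than a proof.

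What your proposal does not use is the input the paper actually exploits: the PBW theorem for the unshifted iYangian $\Yt=\Yt_0$ is already known (Remark \ref{pbwlz}). The paper obtains the lower bound via the algebra homomorphism
\[
{}^\imath\wtl\Y \longrightarrow \Yt\otimes_{\bC}\bC[\xi_i\mid i\in \I],\qquad
\wtl H_i^{(r)}\mapsto H_i^{(r)}\otimes \xi_i\xi_{\tau i},\qquad
\wtl B_i^{(r)}\mapsto B_i^{(r)}\otimes \xi_i,
\]
which is well defined precisely because the defining relations are homogeneous for the $\Qt$-grading (the $\xi$'s record that grading and, in particular, separate the central elements $\wtl H_i^{(0)}$). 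Under this map the ordered monomials in \eqref{pbwpf1} are sent bijectively to distinct members of a PBW basis of $\Yt\otimes\bC[\xi_i]$, hence are linearly independent — the same mechanism as in Theorem \ref{thm:pbw_arb}. To repair your argument you must supply an analogous independent lower bound (such a homomorphism to an algebra with known basis, or a genuinely constructed faithful module); the filtration by $\wtl B$-degree alone cannot deliver it.
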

\begin{proof}
Clearly, the elements $\wtl H_i^{(0)}$ for $i\in \I$ are central in ${}^\imath\wtl \Y$. One checks that the map 
\[
{}^\imath\wtl \Y \to \Yt\otimes_{\bC}\bC[\xi_i\mid i\in \I],\quad \text{defined by}\quad \wtl H_i^{(r)}\mapsto H_i^{(r)}\otimes \xi_i\xi_{\tau i},\quad \wtl B_i^{(r)}\mapsto B_i^{(r)}\otimes \xi_i,
\]
induces an algebra homomorphism. Then proceed as Proposition \ref{prop:span}, one finds that the ordered monomials in the elements of  \eqref{pbwpf1} span ${}^\imath\wtl\Y$. Finally, similar to Theorem \ref{thm:pbw_arb}, we show that this spanning set is bijectively sent to a subset of a basis for 
$\Yt\otimes_{\bC}\bC[\xi_i:i\in \I]$. Here we also need the PBW theorem for $\Yt$ (see Remark~\ref{pbwlz}). Thus the spanning set is linearly independent and hence a basis.
\end{proof}

\begin{cor}\label{cortl}
If $\mu$ is anti-dominant, then ${}^\imath\wtl \Y$ is free as a right module over the polynomial ring 
\beq\label{ringR}
\mathcal R:=\bC[\wtl H_i^{(2p)},\wtl H_{j}^{(q)}\mid i\in \I_0,0\lle 2p\lle -\langle \mu,\alpha_i\rangle,j\in \I_1,0\lle q\lle -\langle \mu,\alpha_j\rangle]
\eeq 
with a basis given by ordered monomials in the elements of the set
\[
\big\{\wtl B_\beta^{(r)}\mid \beta\in\Delta^+, r>0\big\}\cup
\big\{\wtl H_i^{(2p)}\mid i\in \I_0, 2p>-\langle \mu,\alpha_i\rangle\big\}\cup
\big\{\wtl H_i^{(p)}\mid i\in \I_1, p>-\langle \mu,\alpha_i\rangle\big\}.
\]
\end{cor}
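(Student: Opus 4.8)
The plan is to deduce Corollary~\ref{cortl} from the preceding Lemma by a direct change-of-basis argument, exploiting the fact that $\mathcal R$ is a polynomial subring generated by part of the PBW generating set.

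First I would record that, by the Lemma, the ordered monomials in the elements of the set \eqref{pbwpf1} form a $\bC$-basis of ${}^\imath\wtl\Y$, where the chosen order places the $\wtl H$'s to the right of the $\wtl B$'s (as in the proof of Proposition~\ref{prop:span}), and among the $\wtl H$'s we order first those with small superscript. The set \eqref{pbwpf1} decomposes as a disjoint union of the generators listed in the statement of the Corollary together with the finite set
\[
S_\mu :=\big\{\wtl H_i^{(2p)}\mid i\in \I_0,\ 0\lle 2p\lle -\langle\mu,\alpha_i\rangle\big\}\cup\big\{\wtl H_i^{(p)}\mid i\in\I_1,\ 0\lle p\lle -\langle\mu,\alpha_i\rangle\big\},
\]
which (using that $\mu$ is anti-dominant, so $-\langle\mu,\alpha_i\rangle\gge 0$) is precisely the set of polynomial generators of $\mathcal R$. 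Since the $\wtl H_i^{(r)}$ all commute (relation~\eqref{hhIII}), $\mathcal R$ is genuinely a commutative polynomial ring on $S_\mu$, and its monomials are exactly the ordered monomials (in our fixed order) in the elements of $S_\mu$.

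Next I would observe that any ordered monomial in the elements of \eqref{pbwpf1} factors uniquely as $M' \cdot m$, where $M'$ is an ordered monomial in the generators listed in the Corollary and $m$ is an ordered monomial in $S_\mu$ — this is immediate from the chosen total order, in which every element of $S_\mu$ comes after every element of the Corollary's list. Therefore the basis from the Lemma is exactly $\{M' m : M'\in\mathcal B,\ m \in \mathcal M(S_\mu)\}$, where $\mathcal B$ is the set of ordered monomials in the Corollary's generators and $\mathcal M(S_\mu)$ is the monomial basis of $\mathcal R$. This says precisely that $\{M' : M'\in\mathcal B\}$ is a basis of ${}^\imath\wtl\Y$ as a right $\mathcal R$-module, which is the assertion; freeness is automatic once a basis is exhibited.

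The only genuine point requiring care — and the one I would flag as the main (minor) obstacle — is the compatibility of orderings: one must make sure the total order on \eqref{pbwpf1} used to state the Lemma can be chosen so that (i) the $\wtl H$'s follow the $\wtl B$'s, and (ii) within the $\wtl H$'s the elements of $S_\mu$ are maximal, i.e.\ come last; this lets the unique factorization $M=M'm$ go through cleanly. Both are harmless: the PBW theorem for ${}^\imath\wtl\Y$ (or for $\Yt$ via Remark~\ref{pbwlz}) holds for any admissible order, and ordering superscripts so that larger $\wtl H$-indices are not smaller than the cutoffs is admissible. With that fixed, the argument is purely formal. I would close by remarking that the same bookkeeping was already used implicitly in the proof of Theorem~\ref{thm:pbw_arb}, so no new input beyond the preceding Lemma is needed.
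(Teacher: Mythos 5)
Your proposal is correct and matches what the paper intends: the Corollary is stated without proof as an immediate consequence of the preceding Lemma, and your factorization of each PBW monomial as (monomial in the Corollary's generators)$\cdot$(monomial in the generators of $\mathcal R$) — harmless because the $\wtl H$'s commute, so reordering among them is free — is exactly the routine bookkeeping being suppressed. No gap.
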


Suppose further that $\mu$ is anti-dominant, then there is a surjective homomorphism $\pi:{}^\imath\wtl \Y \twoheadrightarrow \Yt_\mu $ defined by
\[
\wtl H_i^{(r)}\to H_i^{(r)},\qquad \wtl B_i^{(s)}\to B_i^{(s)}.
\]
The kernel of the projection $\pi$ is the 2-sided ideal
\beq\label{Imu}
\mc I_\mu:=\big\langle \wtl H_i^{(2r)}- \delta_{2r,-\langle \mu,\alpha_i\rangle},\wtl H_j^{(p)}- \delta_{p,-\langle \mu,\alpha_j\rangle}\big\rangle_{\rm 2-sided},
\eeq
where $i\in \I_0$, $0\lle 2r\lle -\langle \mu,\alpha_i\rangle$, $j\in\I_1$, $0\lle p\lle -\langle \mu,\alpha_j\rangle$.

Also denote by $\mc I_\mu^{\rm left}$ the left ideal generated by the same elements in \eqref{Imu}.
\begin{lem}\label{lemleft}
We have $\mc I_\mu=\mc I^{\rm left}_\mu$.
\end{lem}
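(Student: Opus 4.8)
The plan is to show that the two‑sided ideal $\mc I_\mu$ is already a left ideal, i.e. that right multiplication by the generators of $\mc I_\mu^{\mathrm{left}}$ does not lead outside $\mc I_\mu^{\mathrm{left}}$. Since $\mc I_\mu^{\mathrm{left}}\subseteq \mc I_\mu$ is automatic, it suffices to prove the reverse inclusion, and for that it is enough to check that for every generator $x$ of $\mc I_\mu$ (that is, $x=\wtl H_i^{(2r)}-\delta_{2r,-\langle\mu,\alpha_i\rangle}$ with $i\in\I_0$, $0\le 2r\le-\langle\mu,\alpha_i\rangle$, and $x=\wtl H_j^{(p)}-\delta_{p,-\langle\mu,\alpha_j\rangle}$ with $j\in\I_1$, $0\le p\le-\langle\mu,\alpha_j\rangle$) and every algebra generator $g\in\{\wtl H_k^{(t)},\wtl B_k^{(t)}\}$ of ${}^\imath\wtl\Y$, the product $x\cdot g$ lies in $\mc I_\mu^{\mathrm{left}}$. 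Equivalently, I want to rewrite $xg = \sum g' x'$ with $x'$ among the generators of $\mc I_\mu^{\mathrm{left}}$ and $g'\in{}^\imath\wtl\Y$, i.e. move the "defining‑ideal" factor to the left past $g$.

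First I would dispose of the easy cases. Right multiplication by $\wtl H_k^{(t)}$ is harmless: the $\wtl H$'s commute by \eqref{hhIII}, so $x\wtl H_k^{(t)}=\wtl H_k^{(t)}x\in\mc I_\mu^{\mathrm{left}}$ when $x$ is one of the $\wtl H$‑type generators. So the content is in right multiplication by the $\wtl B_k^{(t)}$. Here I use relation \eqref{hbNqs} (in the form \eqref{hbN} in the split case, together with Lemma~\ref{typeArel} when $c_{\tau k,i}=0$), which expresses $[\wtl H_i^{(r)},\wtl B_k^{(s)}]$ in terms of commutators $[\wtl H_i^{(r')},\wtl B_k^{(s')}]$ with $r'+s'<r+s$ and $r'\le r$. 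Concretely, $\wtl H_i^{(2r)}\wtl B_k^{(s)} = \wtl B_k^{(s)}\wtl H_i^{(2r)} + (\text{linear combination of }\wtl H_i^{(2r')}\wtl B_k^{(s')}\text{ and }\wtl B_k^{(s')}\wtl H_i^{(2r')}\text{ with }2r'\le 2r)$, and crucially the index $2r'$ appearing is still $\le 2r\le-\langle\mu,\alpha_i\rangle$, so every $\wtl H_i^{(2r')}$ occurring is again (up to the constant $\delta$) a generator of $\mc I_\mu^{\mathrm{left}}$. Running an induction on $2r$ (and likewise on $s$), using \eqref{hbNqs} to lower $r+s$, I can push all the $\wtl H_i$‑factors to the left, at the cost of generating only terms whose leftmost "bad" factor is $\wtl H_i^{(2r')}-\delta$ with $2r'\le 2r$; by the inductive hypothesis those already lie in $\mc I_\mu^{\mathrm{left}}$. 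The same argument handles the $j\in\I_1$ generators $\wtl H_j^{(p)}$.

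The main obstacle, and the step I would be most careful about, is the bookkeeping of the parity constraints and the exact range of indices in \eqref{hbNqs}: I must verify that when I commute $\wtl H_i^{(2r)}$ (with $i\in\I_0$, so only even superscripts survive by \eqref{hhN}) past a $\wtl B_k$, every Cartan generator produced on the left still has superscript in the window $[0,-\langle\mu,\alpha_i\rangle]$ and the correct parity, so that it is literally one of the listed generators of $\mc I_\mu$ — this is where anti‑dominance of $\mu$ (giving $-\langle\mu,\alpha_i\rangle\ge 0$) is used, and where one checks that no $\wtl H_i^{(t)}$ with $t<0$ (which vanishes in ${}^\imath\wtl\Y$ anyway) or with $t$ outside the window is needed. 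One also has to treat the $s=1$ corner case of \eqref{hbNqs}/\eqref{SerreIII} via the stated convention, but since we are only multiplying generators of the ideal and not invoking the Serre relations \eqref{SerreIII}--\eqref{SerreIII2} themselves, relation \eqref{hbNqs} (equivalently \eqref{alt}) suffices. Once all products $xg$ are shown to lie in $\mc I_\mu^{\mathrm{left}}$, it follows that $\mc I_\mu^{\mathrm{left}}$ is a right ideal, hence two‑sided, hence equals $\mc I_\mu$, completing the proof.
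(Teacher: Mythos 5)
Your proposal follows the paper's proof essentially verbatim in strategy: you show $\mc I_\mu^{\rm left}$ is stable under right multiplication by the algebra generators (trivial for the $\wtl H_k^{(t)}$'s), by commuting the Cartan generators past the $\wtl B_k^{(t)}$'s via \eqref{hbNqs} and inducting on the Cartan superscript, using anti-dominance so that all Cartan indices produced stay in the window $[0,-\langle\mu,\alpha_i\rangle]$ and are therefore again ideal generators. One small tightening: \eqref{hbNqs} does not lower $r+s$ (the term $[\wtl H_i^{(r-2)},\wtl B_k^{(s+2)}]$ preserves it), but it does lower the Cartan index strictly, and this strictness --- which the paper records as $[\wtl H_i^{(r)},\wtl B_j^{(s)}]\in\langle \wtl H_i^{(0)},\ldots,\wtl H_i^{(r-1)}\rangle^{\rm left}$ --- is exactly what makes the Kronecker deltas of the lower-index generators vanish, so your ``$2r'\le 2r$'' should be ``$2r'<2r$'' (and the residual $\wtl H$-factors end up on the right, not the left, exhibiting left-ideal membership).
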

\begin{proof}
It is sufficient to prove that $\mc I^{\rm left}_\mu$ is also a right ideal.

Recall that in the algebra ${}^\imath\wtl \Y$, if $i\in \I_0$, then we have the following relations 
\[
\wtl H_i^{(2r-1)}=0,\qquad [\wtl H_i^{(0)},\wtl B_{j}^{(s)}]=0,\qquad [\wtl H_i^{(2)},\wtl B_{j}^{(s)}]=2c_{ij}B_j^{(s+1)}\wtl H_i^{(0)},
\]
\[
[\wtl H_i^{(r+2)},\wtl B_j^{(s)}]=[\wtl H_i^{(r)},\wtl B_j^{(s+2)}]+c_{ij}[\wtl H_i^{(r)},\wtl B_j^{(s+1)}]+\frac{1}{4}c_{ij}^2[\wtl H_i^{(r)},\wtl B_j^{(s)}]+2c_{ij}\wtl B_j^{(s+1)}\wtl H_i^{(r)}.
\]
Thus an obvious induction on $r$ shows that for any $r\gge 1$, we have
\[
[\wtl H_i^{(r)},\wtl B_j^{(s)}]\in \big\langle \wtl H_i^{(0)},\wtl H_i^{(1)},\ldots,\wtl H_i^{(r-1)}\big\rangle^{\rm left}.
\]
Note that here $\wtl H_i^{(p)}=0$ if $p$ is odd. Therefore, for $1\lle 2r\lle -\langle \mu,\alpha_i\rangle$, we have
\[
(\wtl H_i^{(2r)}- \delta_{2r,-\langle \mu,\alpha_i\rangle})\wtl B_j^{(s)}\in \wtl B_j^{(s)}(\wtl H_i^{(2r)}- \delta_{2r,-\langle \mu,\alpha_i\rangle})+\mc I_\mu^{\rm left}=\mc I_\mu^{\rm left}.
\]
A similar calculation also works for $i\in \I_1$. It implies that the right multiplication by $\wtl B_j^{(s)}$ preserves $\mc I_\mu^{\rm left}$. It is also clear that right multiplication by $\wtl H_j^{(r)}$ preserves $\mc I_\mu^{\rm left}$. Since the algebra ${}^\imath\wtl \Y $ is generated by $\wtl B_j^{(s)}$ and $\wtl H_j^{(r)}$, we conclude that $\mc I_\mu^{\rm left}$ is a right ideal.
\end{proof}

\begin{proof}[Proof of Theorem \ref{thm:pbwqs}]
Let $\Gamma:\mc R\rightarrow \bC$
be  the homomorphism defined by sending $\wtl H_i^{(2p)}-\delta_{2p,-\langle \mu,\alpha_i\rangle}$ and $\wtl H_{j}^{(q)}-\delta_{q,-\langle \mu,\alpha_j\rangle}$ to $0$, for $i\in \I_0$, $0\lle 2p\lle -\langle \mu,\alpha_i\rangle$, $j\in \I_1$, and $0\lle q\lle -\langle \mu,\alpha_j\rangle$. It follows from Lemma~\ref{lemleft} that $\Yt_\mu$ is the base change of the right module ${}^\imath\wtl \Y $ with respect to the homomorphism $\Gamma$,
\[
\Yt_\mu ={}^\imath\wtl\Y \otimes_{\mc R}\bC.
\]
By Corollary \ref{cortl}, ${}^\imath\wtl \Y $ is a free right module over $\mc R$. Therefore, the basis from Corollary \ref{cortl} gives rise to a basis for $\Yt_\mu$ over $\bC$, completing the proof.
\end{proof}

\section{$\mathrm{i}$GKLO representations of shifted iYangians}
\label{sec:GKLO}

In this section, we formulate a family of iGKLO representations of shifted iYangians of arbitrary quasi-split ADE type. 

\subsection{Ring of difference operators}
\label{ssec:quantumtorus}

Fix a dominant $\tau$-invariant coweight $\la$, i.e., $\tau \la =\la$,  and an even spherical coweight $\mu$ such that $\lambda \gge \mu$. We denote
\begin{align}\label{ell}
\la-\mu=\sum_{i\in \I}\bv_i\alpha_i^\vee,
\end{align}
where $\bv_i\in\bN$ for $i\in \I$. Denoting $\mathbb Z_2 =\{\bar 0, \bar 1\}$, we set
\beq \label{ell_theta}
\fkv_i =
\begin{cases} 
\bv_i, & \text{if } \tau i \neq i \\ 
\lfloor\tfrac{1}{2} \bv_i \rfloor, & \text{if } \tau i = i,
\end{cases}
\qquad 
\theta_i =
\begin{cases} 
0, & \text{if } \tau i \neq i \\ 
\delta_{\overline{\mathbf v}_i, \bar{1}}, & \text{if } \tau i = i. 
\end{cases}
\eeq
Introduce $\vartheta_i$ by
\beq\label{vartheta}
\vartheta_{i}=\begin{cases}
\theta_i, & \text{if }\tau i\ne i,\\
\max\{ \theta_j\mid j\in\I \text{ and }c_{ij}\ne 0\},& \text{if }\tau i= i.
\end{cases}
\eeq
Denote $\mathbf w_i=\langle \lambda,\alpha_i\rangle$. We also set
\beq
\label{varsigma}
\fkw_i =
\begin{cases} 
\bw_i, & \text{if } \tau i \neq i \\ 
\lfloor\tfrac{1}{2} \bw_i \rfloor, & \text{if } \tau i = i,
\end{cases}
\qquad 
\varsigma_i =
\begin{cases} 
0, & \text{if } \tau i \neq i \\ 
\delta_{\overline{\bw}_i, \bar{1}}, & \text{if } \tau i = i. 
\end{cases}
\eeq

Note that 
\[
\fkv_{\tau i}=\fkv_i,\qquad \fkw_{\tau i}=\fkw_i, \qquad \text{for all }i\in \I. 
\]
Recall $C=(c_{ij})$ is the Cartan matrix. Throughout the paper, we shall impose the following fundamental parity condition on the dimension vector $\bv =(\bv_i)_{i\in\I}$; see \eqref{ell}--\eqref{ell_theta}:
\beq\label{parity}
c_{ij}\theta_i\theta_j=0, \qquad \text{ for } i\neq j \in \I,
\eeq
that is, at least one of $\bv_i$ and $\bv_j$ is even when $i\neq j\in \I$ are connected. 
This condition is needed in this section and also turns out to be required for geometric constructions in \cite[Section~5]{LWW25islice}; see, e.g., \cite[Theorem~5.11]{LWW25islice}. 

\begin{rem}  \label{rem:parity}
  Let $i\in \I$. If $\theta_i=1$, then the evenness of $\mu$ and the parity condition \eqref{parity} imply that $\mathbf w_i =\langle \lambda,\alpha_i\rangle$ is even and hence $\varsigma_i=0$. 
\end{rem}

Let $\bm z:=(z_{i,s})_{i\in\iI,1\lle s\lle \fkw_i}$ be formal variables and denote the polynomial ring
\[
\bC[\bm z]=\bC[z_{i,s}]_{i\in\iI,1\lle s\lle \fkw_i}
\]
and define the new $\C$-algebra
\[
\Yt_\mu(\g)[\bm z]:=\Yt_\mu(\g)\otimes \bC[\bm z],
\]
with new central elements $z_{i,s}$.
Consider the $\bC$-algebra
\begin{align} \label{A:generators}
\mc A:=\bC[\bm z]\langle w_{i,r},\dfo_{i,r}^{\pm 1}, (w_{i,r}\pm w_{i,r'}+m)^{-1},(w_{i,r}+\hf m)^{-1}\rangle_{i \in \iI,1\lle r\ne r'\lle \fkv_i,m\in\bZ},
\end{align}
subject to the relations
\begin{align} \label{A:relations}
[ \dfo_{i,r}^{\pm 1},w_{j,r'}] =\pm\delta_{ij}\delta_{r,r'}\dfo_{i,r}^{\pm 1},\qquad [w_{i,r},w_{j,r'}]=[\dfo_{i,r},\dfo_{j,r'}]=0,\qquad \dfo_{i,r}^{\pm 1}\dfo_{i,r}^{\mp 1}=1. 
\end{align}
It is convenient to extend the notation $z_{i,s}$, $w_{i,r}$, and $\dfo_{i,r}$ to all $i\in \I$ as follows. First set 
\beq\label{signzw2}
z_{\tau i,s}=-z_{i,s}, \qquad \text{ for } i\in \I_1 \text{ and } 1\lle s\lle \fkw_i. 
\eeq
We further set
\begin{align}
 \label{signzw}
 w_{\tau i,r} :=-w_{i,\fkv_i+1-r},
\qquad
\dfo_{\tau i,r} :=\dfo_{i,\fkv_i+1-r}^{-1}, 
 \qquad \text{ for } i\in \I_1 \text{ and } 1\lle r\lle \fkv_i.
\end{align}

Given a monic polynomial $f(u)$ in $u$, we define
\begin{align}  \label{eq:f^-}
f^-(u):=(-1)^{\deg f}f(-u)
\end{align}
to be the monic polynomial whose roots are the opposite of the roots of $f(u)$.

For each $i\in \I_0$, define
\begin{align}  
W_i(u)&=\prod_{r=1}^{\fkv_i}(u- w_{i,r}),\qquad\quad~~~ Z_i(u)=\prod_{s=1}^{\fkw_i}(u-z_{i,s}),
\label{WiZi} \\
\W_i(u)&=u^{\theta_i}\prod_{r=1}^{\fkv_i}(u^2- w^2_{i,r}),\qquad \Z_i(u)=u^{\varsigma_i}\prod_{s=1}^{\fkw_i}(u^2-z^2_{i,s}).
\label{WiZibold}
\end{align}
Then we have $\deg \W_i(u) = \bv_i$ and $\W_i(u) = \W_i^-(u)$,  and similarly $\deg \Z_i(u) = \mathbf w_i$ and $\Z_i^-(u) = \Z_i(u)$. We also define 
\begin{align} \label{Wir}
\W_i^\circ(u)= \prod_{r=1}^{\fkv_i}(u^2- w^2_{i,r}),\quad\W_{i,r}(u)=u^{\theta_i}(u+w_{i,r})\prod_{s=1,s\ne r}^{\fkv_i}(u^2- w^2_{i,s}).
\end{align}
Introduce
\begin{align} \label{WiZibar}
\overline W^-_i(u):=u^{\theta_i}W^-_i(u),\qquad \overline Z^-_i(u):=u^{\varsigma_i}Z^-_i(u).
\end{align}
For simplicity, set
\beq \label{varkappa}
\varkappa(u)=1-\frac{1}{2u},\qquad \bm\varkappa(u)=1-\frac{1}{4u^2}.
\eeq

For each $i\in \I_1$, we fix a choice of $\zeta_i\in \bN$ such that $1\lle \zeta_i\lle \fkv_i$ and extend it to $i\in \I_1\cup\I_{-1}$ by 
\begin{align*} 
\zeta_{\tau i}=\fkv_i-\zeta_{i}.
\end{align*}

For $i\in \I_1\cup \I_{-1}$, set
\begin{align}
\label{WiZi_qs}
\begin{split}
W_i(u)=\prod_{r=1}^{\zeta_i}(u- w_{i,r}), &\qquad 
\W_i(u)=\prod_{r=1}^{\fkv_i}(u- w_{i,r}),
\\
\Z_i(u)=\prod_{s=1}^{\fkw_i}(u-z_{i,s}),
& \qquad
\W_{i,r}(u) =\prod_{s=1,s\ne r}^{\fkv_i}(u- w_{i,s}).
\end{split}
\end{align}
It follows by \eqref{signzw2} and \eqref{signzw} that for $i\in \I_1\cup \I_{-1}$ we have
\[
\W_i(u)=W_i(u)W_{\tau i}^-(u),\qquad 
\W_{\tau i}(u)=\W_i^-(u),\qquad 
\Z_{\tau i}(u)=\Z_i^-(u).
\]
We pick a monic polynomial $Z_i(u)$, for each $i\in \I_1\cup \I_{-1}$, such that
\[
\Z_i(u)=Z_i(u)Z_{\tau i}^-(u)=(-1)^{\deg Z_{\tau i}}Z_i(u)Z_{\tau i}(-u).
\]

\subsection{iGKLO representations for type AI}

It is convenient to first work with the shifted version of twisted Yangian of split type A (i.e., type AI), which corresponds to $\gl_n$ instead of $\mathfrak{sl}_n$.  We will use the Drinfeld type presentation established in \cite{LWZ25} via Gauss decomposition. In this case, $\I=\I_0=\{1,\ldots,n-1\}$.

Given a Laurent series $X(u)=\sum_{r\in \bZ}X^{(r)}u^{-r}$, denote by $(X(u))^{\star}$ its principal part:
\[
(X(u))^{\star}:=\sum_{r>0}X^{(r)}u^{-r}.
\]

\begin{dfn}
The \textit{shifted iYangian $\Yt_{\mu}(\gl_n)$ associated to an even coweight $\mu$} is the associative $\bC$-algebra generated by $D_i^{(r)}$, $\wtl D_i^{(\tl r)}$, and $E_j^{(s)}$, where $1\lle i\lle n$, $1\lle j<n$, $r\in \bZ_{\gge \langle  \mu,\ve_i\rangle}$, $\tl r\in\bZ_{\gge -\langle \mu,\ve_i\rangle}$ and $s\in\bZ_{>0}$, with the relations
\begin{align}
&D_{i}^{(\langle \ve_i,\mu\rangle)}=1,\qquad [D_i(u),D_j(v)]=0,\qquad D_i(u)\wtl D_{i}(u)=\wtl D_i(u)D_i(u)=1,\label{ddgl}\\
&\wtl D_i(u)D_{i+1}(u)=\wtl D_i(-u+i)D_{i+1}(-u+i),\label{deven}\\
&[D_i(u),E_i(v)]=\frac{D_i(u)(E_i(u)-E_i(v))}{u-v}+\frac{(E_i(v)-E_i(-u+i))D_i(u)}{u+v-i},\label{de}\\
&[E_i(v),D_{i+1}(u)]=\frac{D_{i+1}(u)(E_i(u)-E_i(v))}{u-v}+\frac{(E_i(v)-E_i(-u+i))D_{i+1}(u)}{u+v-i},\label{ed}\\
&[E_i(u),E_{i+1}(v)]=\frac{-E_i(u)E_{i+1}(v)+[E_i^{(1)},E_{i+1}(v)]-[E_i(u),E_{i+1}^{(1)}]}{u-v},\label{ee2}\\
&[E_i(u),E_j(v)]=0,\qquad \text{ if }c_{ij}=0,\label{ee=0}\\
&[E_i(u),E_i(v)]=-\frac{(E_i(u)-E_i(v))^2}{u-v}-\frac{(\wtl D_i(u)D_{i+1}(u))^\star-(\wtl D_i(v)D_{i+1}(v))^\star}{u+v-i},\label{ee}\\
&\big[B_i^{(1)},[B_i(u),B_j^{(1)}]\big]+\big[B_i(u),[B_i^{(1)},B_j^{(1)}]\big] \nonumber \\  &\hskip1.7cm=\big(B_j(-u+\tfrac12)H_i(u)-B_j(u+\tfrac12)H_i(u)\big)^\star, \quad \text{ if }c_{ij}=-1,\label{serregl}
\end{align}
where
\[
D_i(u)=\sum_{r\gge \langle \ve_i,\mu\rangle}D_i^{(r)}u^{-r},\quad \wtl D_i(u)=\sum_{r\gge -\langle \ve_i,\mu\rangle}\wtl D_i^{(r)}u^{-r},\quad E_j(u)=\sum_{s>0}E_j^{(s)}u^{-s},
\]
\begin{align}  \label{Bi:shiftEi}
\quad B_i(u)=E_i(u+\tfrac{i}2),\quad H_i(u)= \wtl D_{i}(u+\tfrac{i}2)D_{i+1}(u+\tfrac{i}2).
\end{align}
\end{dfn}

\begin{lem}\label{lem:sl-gl}
For any even coweight $\mu$, there exists a homomorphism
\[
\eta_{\mu}:\Yt_\mu(\mathfrak{sl}_n)\longrightarrow \Yt_{\mu}(\gl_n)
\]
defined by
\[
B_i(u)\mapsto B_i(u),\qquad H_i(u)\mapsto H_i(u).
\]
\end{lem}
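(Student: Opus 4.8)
The plan is to verify directly that the assignment $B_i(u) \mapsto B_i(u)$, $H_i(u) \mapsto H_i(u)$ respects all the defining relations of $\Yt_\mu(\fksl_n)$ from Definition~\ref{def:qsplit} (specialized to split type A, i.e.\ Definition~\ref{def1}), using the relations of $\Yt_\mu(\gl_n)$ together with the change of variables \eqref{Bi:shiftEi}. Since $\Yt_\mu(\fksl_n)$ is presented by generators and relations, it suffices to check that the images satisfy \eqref{def}--\eqref{serreN}. The relation $H_i^{(p)}=0$ for $p<-\langle\mu,\alpha_i\rangle$ and $H_i^{(-\langle\mu,\alpha_i\rangle)}=1$ follows from the definition $H_i(u) = \wtl D_i(u+\tfrac i2)D_{i+1}(u+\tfrac i2)$ together with $D_i^{(\langle\ve_i,\mu\rangle)}=1$, $\wtl D_i^{(-\langle\ve_i,\mu\rangle)}=1$ and $\langle\mu,\alpha_i\rangle = \langle\mu,\ve_i\rangle - \langle\mu,\ve_{i+1}\rangle$; the commutativity \eqref{hhN} follows from \eqref{ddgl} (and the parity statement $H_i^{(2r+1)}=0$ follows from \eqref{deven} after the substitution). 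First I would record how the generating-series identities \eqref{de}, \eqref{ed}, \eqref{ee2}, \eqref{ee}, \eqref{serregl} transform under $u\mapsto u+\tfrac i2$, $v\mapsto v+\tfrac i2$, which converts poles at $u=v$ and $u+v=i$ into poles at $u=v$ and $u+v=0$ respectively — exactly the shape expected for the split iYangian relations.

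The core computation is to derive the $\fksl_n$-relations \eqref{hbN}, \eqref{bbN}, \eqref{bbN2}, \eqref{serreN} from the $\gl_n$-relations. For \eqref{hbN} (the $HB$-relation), I would combine \eqref{de} and \eqref{ed}: writing $H_i(u) = \wtl D_i(u+\tfrac i2)D_{i+1}(u+\tfrac i2)$ and using the Leibniz rule $[H_i(u), E_i(v)] = \wtl D_i(u')[D_{i+1}(u'),E_i(v)] + [\wtl D_i(u'),E_i(v)]D_{i+1}(u')$ with $u'=u+\tfrac i2$, together with $[\wtl D_i(u),E_i(v)] = -\wtl D_i(u)[D_i(u),E_i(v)]\wtl D_i(u)$ obtained by differentiating $D_i\wtl D_i=1$, one obtains a generating-function form of the $HB$-relation; extracting coefficients and comparing with the known equivalence (as in \cite{LWZ25}) between the series form and \eqref{hbN} finishes this case. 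For \eqref{bbN} and \eqref{serreN}, the relations \eqref{ee} and \eqref{serregl} already contain the terms $(\wtl D_i(u)D_{i+1}(u))^\star = H_i(u-\tfrac i2)^\star$ and $B_j(-u+\tfrac12)H_i(u) - B_j(u+\tfrac12)H_i(u)$ on the right-hand side, so after the shift $u\mapsto u+\tfrac i2$ these become precisely the $H$-terms $2\delta_{ij}(-1)^{s_1}H_i^{(s_1+s_2)}$ and $(-1)^{s_1-1}[H_i^{(s_1+s_2)},B_j^{(s-1)}]$ appearing in \eqref{bbN} and \eqref{serreN}; the relation \eqref{bbN2} is immediate from \eqref{ee=0}. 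The relation \eqref{ee2} for $c_{ij}=-1$, $|i-j|=1$ is the non-symmetric part that, combined with \eqref{serregl}, assembles into the symmetrized Serre relation — this matches exactly the derivation in \cite{LWZ25} showing that the Gauss-decomposition presentation of $\Yt(\gl_n)$ restricts to that of $\Yt(\fksl_n)$.

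I expect the main obstacle to be the bookkeeping in the $HB$-relation \eqref{hbN} and in verifying that the series identity \eqref{ee} genuinely unpacks into \eqref{bbN} at the level of the individual coefficients $B_i^{(s_1)}, B_i^{(s_2)}$: the quadratic term $-(E_i(u)-E_i(v))^2/(u-v)$ in \eqref{ee} must be shown to produce exactly the anticommutator term $\tfrac12 c_{ii}[B_i^{(s_1)},B_i^{(s_2)}]_+ = [B_i^{(s_1)},B_i^{(s_2)}]_+$ (recall $c_{ii}=2$) after clearing denominators and shifting, which requires care with the principal-part operation $(\cdot)^\star$ and with the fact that $E_i(u)$ has no constant term. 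The cleanest route, which I would adopt, is to avoid recomputing everything from scratch: since the case $\mu=0$ is exactly the statement that the Drinfeld presentation of the split twisted Yangian $\Yt(\fksl_n)$ of \cite{LWZ25degen} is recovered from $\Yt(\gl_n)$ of \cite{LWZ25}, one only needs to observe that all defining relations of both $\Yt_\mu(\fksl_n)$ and $\Yt_\mu(\gl_n)$ other than the truncation relations \eqref{def}/\eqref{ddgl}--\eqref{deven} are independent of $\mu$, so the homomorphism for general even $\mu$ follows from the $\mu=0$ case once one checks compatibility with the Cartan-truncation relations, which is the short computation with $D_i^{(\langle\ve_i,\mu\rangle)}=1$ indicated above. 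Thus the bulk of the work is a citation to \cite{LWZ25} plus this truncation check.
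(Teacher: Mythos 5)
Your proposal is correct and follows essentially the same route as the paper: the paper's proof simply invokes the same calculation as in \cite[\S4]{LWZ25} (which, as you observe, is $\mu$-independent except for the truncation/leading-term conditions, handled by the short check with $D_i^{(\langle\ve_i,\mu\rangle)}=1$). The only point the paper makes explicit that you treat more briefly is that the Serre relation \eqref{serregl} is formulated differently from \eqref{serreN} (it is the special case $s=s_1=1$ covered by Lemma \ref{lem-reduction}), with the equivalence delegated to \cite[Lemma 6.5, Remark 4.3]{LPTTW25}.
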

\begin{proof}
It follows from the same calculation as in \cite[\S4]{LWZ25}. Note that the Serre relation \eqref{serregl} is formulated differently from \eqref{serreN}. Indeed, the Serre relation \eqref{serregl} corresponds to \cite[Lemma 6.5]{LPTTW25}, see \cite[Remark 4.3]{LPTTW25} for more detail.
\end{proof}

In order to describe the iGKLO representations for shifted iYangians of $\gl_n$ (cf. \cite[Theorem.~2.35]{FPT22} for shifted Yangians of $\gl_n$), we need additional notation. Recall the polynomials $\W_i(u)$ and $\Z_i(u)$ from \S\ref{ssec:quantumtorus} and note that $i=\tau i$ for all $i\in\I$. We denote
\[
\vartheta_0=0,\quad \fkv_0=0, \quad \fkv_n=0,\quad \W_0(u)=\W_{n}(u)=1,
\]
and take any
\[
\Z_0(u)=u^{-\theta_1}\prod_{x\in\bC}(u^2-x^2)^{m_x},
\]
that satisfies $m_x\in\bZ$, $m_x\ne 0$ for only finitely many $x\in\bC$, and 
$$\langle \la,\ve_1\rangle=\theta_1-2\sum_{x\in\bC}m_x.$$

\begin{thm}\label{thm:iGKLO:gl}
Let $\mu$ be an even coweight. There is a homomorphism $\Phi_{\mu}^\la:\Yt_\mu(\gl_n)[\bm z]\to \mc A$, defined by
\begin{align}
D_i(u)\mapsto& \frac{\W_i(u-\tfrac{i-1}{2})}{\W_{i-1}(u-\tfrac{i}2)}\prod_{j=0}^{i-1}\Big(\bm\varkappa(u-\tfrac{j}{2})^{\vartheta_{j}}\Z_j(u-\tfrac{j}2)\Big),
\notag \\
E_i(u)\mapsto& -\sum_{r=1}^{\fkv_i}\frac{\varkappa(w_{i,r})^{-\vartheta_i}\W_{i-1}(w_{i,r}-\tfrac12)\overline W^-_{i+1}(w_{i,r}-\tfrac12)}{(u-\tfrac{i-1}{2}-w_{i,r})\W_{i,r}(w_{i,r})}\dfo_{i,r}^{-1}
\notag \\
&~ -\sum_{r=1}^{\fkv_i}\frac{\varkappa(-w_{i,r})^{-\vartheta_i} \Z_i(w_{i,r}+\tfrac12) W_{i+1}(w_{i,r}+\tfrac12)}{(u-\tfrac{i-1}{2}+w_{i,r})\W_{i,r}(w_{i,r})}\dfo_{i,r}
 \label{eq:Eiu} \\
& +\sqrt{(-1)^{\fkw_i+\fkv_{i-1}+\fkv_{i+1}}} \frac{\theta_i Z_i(0)}{(u-\tfrac{i}{2})\W_i^\circ(\tfrac12)}\prod_{j\leftrightarrow i}W_{j}(0).
\notag
\end{align}
(The image of $\wtl D_i(u)$ is the inverse of the image of $D_i(u)$.)
\end{thm}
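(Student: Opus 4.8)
**The plan is to verify that the assignment $\Phi_\mu^\la$ respects all the defining relations \eqref{ddgl}--\eqref{serregl} of $\Yt_\mu(\gl_n)$, working entirely inside the ring $\mc A$.** Since $\Yt_\mu(\gl_n)$ is defined by generators and relations, it suffices to check that the proposed images of $D_i(u)$, $\wtl D_i(u)$, $E_j(u)$ satisfy each relation; uniqueness of the homomorphism is then automatic because the listed elements generate. I would organize the check relation-by-relation, from the easiest to the hardest.

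First I would handle the Cartan-type relations. Relation \eqref{ddgl} is immediate: the images of the $D_i(u)$ are products of commuting rational expressions in the $w_{i,r}$ (no $\dfo$'s appear), so they commute with each other, and the normalization $D_i^{(\langle\ve_i,\mu\rangle)}=1$ is arranged by the explicit leading term; the image of $\wtl D_i(u)$ is defined to be the inverse, so $D_i(u)\wtl D_i(u)=1$ holds by fiat. The key structural identity to establish next is the ``evenness'' relation \eqref{deven}, i.e.\ $\wtl D_i(u)D_{i+1}(u)=\wtl D_i(-u+i)D_{i+1}(-u+i)$. Computing $\wtl D_i(u)D_{i+1}(u)$ from the formula, the product telescopes: most $\W_k$ and $\Z_k$ factors cancel, and one is left with an expression built from $\W_i$, $\W_{i+1}$, the $\Z_j$ and the $\bm\varkappa$-factors, all of which are manifestly invariant under $u\mapsto -u+i$ once one uses $\W_k(u)=\W_k^-(u)$, $\Z_k^-(u)=\Z_k(u)$, $\bm\varkappa(u)=\bm\varkappa(-u)$, and the half-integer shifts $u-\tfrac j2\mapsto -(u-\tfrac j2)$ built into the argument shifts. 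This is the analogue of the shifted-Yangian computation in \cite[Theorem~2.35]{FPT22}, $\imath$-fied by inserting the $\bm\varkappa$ and $Z_i(0)$ data; I'd carry it out by bookkeeping the arguments carefully.

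Next come the mixed $DE$ and $ED$ relations \eqref{de}, \eqref{ed}. Here the strategy is the standard one for GKLO-type maps: write $D_i(u)$ and $E_i(v)$ in the images, push the shift operators $\dfo_{i,r}^{\pm1}$ through $D_i(u)$ using \eqref{A:relations} (which shifts $w_{i,r}\mapsto w_{i,r}\mp1$ inside $D_i$), and recognize the resulting rational-function identity as a partial-fraction decomposition in the variable $v$ with poles at $v=w_{i,r}$ (from the $\dfo^{-1}$ term) and $v=-w_{i,r}+i$ (from the $\dfo$ term). The residues on the two sides must match, and each residue identity is a finite algebraic check using the factorizations of $\W_{i-1},\W_{i+1},\Z_i,W_{i+1},\overline W_{i+1}^-$ at $u=w_{i,r}-\tfrac12$; the new $\imath$-feature is the extra $\theta_i$-term in $E_i(u)$ with pole at $u=\tfrac i2$, whose residue must be checked against the $\bm\varkappa$-factor in $D_i(u)$ (this is where the evenness of $\mu$ and Remark~\ref{rem:parity} enter, guaranteeing the square-root prefactors are well-defined and the parity condition \eqref{parity} makes the stray $W_j(0)$ terms consistent). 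Relations \eqref{ee2} and \eqref{ee=0} are then comparatively short: \eqref{ee=0} holds because for $c_{ij}=0$ the images of $E_i(u)$ and $E_j(v)$ involve disjoint sets of $w,\dfo$ variables except through the polynomials $\overline W_{i\pm1}^-,W_{i\pm1}$, and a direct check shows the cross-terms cancel; \eqref{ee2} is again a partial-fraction identity in $u-v$.

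**The main obstacle will be the two ``quadratic in $E_i$'' relations \eqref{ee} and the Serre relation \eqref{serregl}.** For \eqref{ee}, one must compute $[E_i(u),E_i(v)]$ from the image, which produces terms of type $\dfo^{-1}\dfo^{-1}$, $\dfo\dfo$, and $\dfo^{-1}\dfo$ (the last giving a non-trivial shift interaction); the $\dfo^{-1}\dfo^{-1}$ and $\dfo\dfo$ pieces must combine into $-\tfrac{(E_i(u)-E_i(v))^2}{u-v}$ while the mixed pieces must reproduce $-\tfrac{(\wtl D_i(u)D_{i+1}(u))^\star - (\wtl D_i(v)D_{i+1}(v))^\star}{u+v-i}$, which requires recognizing the image of $\wtl D_iD_{i+1}$ (computed above) inside a sum of residue terms — a delicate matching of rational functions where the $\theta_i$-correction terms and the $Z_i(0)$, $W_j(0)$ constants all have to conspire. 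The Serre relation \eqref{serregl}, phrased with the generating function $B_i(u)$ and the shifts $u\pm\tfrac12$, is the most computation-heavy: after substituting $B_i(u)=E_i(u+\tfrac i2)$ and expanding the double commutator, one gets a sum over pairs $(r,r')$ with triple products of $\dfo$'s; most terms cancel in pairs by the $\mathrm{Sym}$ structure, and the surviving terms must be re-summed — this is exactly the point where, as the authors note, the ``generating function trick'' from \cite{SSX25} is used to collapse an a priori infinite sum like the one appearing in \eqref{SerreIII2}. I expect the cleanest route is to reduce \eqref{serregl} to its lowest components via (the $\gl_n$-analogue of) Lemma~\ref{lem-reduction}, check the base case $s=s_1=s_2=1$ by a finite residue computation, and invoke the reduction to conclude; this mirrors the split-type argument of \cite[\S4]{LWZ25}, with the $\imath$-corrections tracked through. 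The overall verification is long but, modulo the bookkeeping, reduces to a finite list of partial-fraction/residue identities, which is why I would present it as a sequence of lemmas (one per relation) rather than a single computation.
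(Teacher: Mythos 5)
Your overall strategy---checking the relations \eqref{ddgl}--\eqref{serregl} one at a time inside $\mc A$, with \eqref{ddgl}, \eqref{deven}, \eqref{ee=0} immediate, the mixed relations \eqref{de}--\eqref{ee2} handled by pushing $\dfo_{i,r}^{\pm1}$ through and matching rational functions, and \eqref{ee} split into the $\dfo^{\pm}\dfo^{\pm}$, $\dfo^{\pm}\dfo^{\mp}$ and constant pieces compared via residues---is essentially the paper's proof in \S\ref{ssec:bb}--\S\ref{ssec:eepf}, where the cancellations are organized around the exchange relations of Lemma \ref{lem:chi} and the partial-fraction Lemma \ref{lem:std}, and where the $\theta_i$-term $\chi_{i,0}^+$ and the parity condition \eqref{parity} are tracked exactly as you indicate.

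The one step that does not go through as written is your treatment of the Serre relation \eqref{serregl}. First, the generating-function trick of \cite{SSX25} plays no role here: in the paper it is invoked only for the quasi-split $\mathrm{A}_{2n}$ Serre relation \eqref{SerreIII2} in the proof of Theorem \ref{thm:GKLOquasisplit} (\S\ref{ssec:qsAIII2n}); relation \eqref{serregl} contains no infinite sum to collapse. More importantly, your plan to ``reduce \eqref{serregl} to its lowest components via a $\gl_n$-analogue of Lemma \ref{lem-reduction}, check the base case $s=s_1=s_2=1$, and invoke the reduction'' is not available: the reduction for split-type Serre relations (part (1) of Lemma \ref{lem-reduction}, following \cite[\S4.3]{LWZ25}) only goes down to the case $s=s_1=1$ with \emph{all} $s_2>0$, i.e.\ to a one-parameter family which is precisely the generating-function identity \eqref{serregl} itself; the single-point reduction (part (2)) is specific to \eqref{SerreIII2}. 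Moreover \eqref{serregl} is a defining relation of $\Yt_\mu(\gl_n)$, already of the shape ``$s_1=s_2=1$ with a generating variable in the remaining slot,'' so it must be verified as stated. The paper does exactly this in \S\ref{sec:serre}: expand both sides in the $\chi_{i,r}^{\pm}$, kill all terms with $r_1\neq r_2$ (and the same-sign $r_1=r_2$ terms) by Lemma \ref{lem:chi}, rewrite the surviving $\chi_{i,r}^{\pm}\chi_{i,r}^{\mp}$ and $\chi_{i,0}^+\chi_{i,0}^+$ contributions as residues of $\Omega_i$ and $\Omega_{i;j,s}$, and match them with the image of $\big(B_j(-u+\tfrac12)H_i(u)-B_j(u+\tfrac12)H_i(u)\big)^\star$ via Lemma \ref{lem:std}, using \eqref{parity} to rule out $\chi_{i,0}^+$ and $\chi_{j,0}^+$ being simultaneously nonzero. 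Your residue machinery is the right tool; it just has to be applied directly to \eqref{serregl} rather than to a hoped-for single lowest component.
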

Theorem \ref{thm:iGKLO:gl} is proved in \S\ref{sec:proof1} below.

There is some flexibility in the formula for $E_i(u)$ in \eqref{eq:Eiu}, where the factor $\Z_i(w_{i,r}+\tfrac12)$ can be split into 2 factors, one for each of the first two summations. In this way, using notation  \eqref{WiZibold} and \eqref{WiZibar} (also see \eqref{eq:f^-}), the formula for $E_i(u)$ in \eqref{eq:Eiu} can be modified to be 
\begin{align*}
E_i(u)\mapsto &-\sum_{r=1}^{\fkv_i}\frac{\varkappa(w_{i,r})^{-\vartheta_i} Z_i(w_{i,r}-\tfrac12)\W_{i-1}(w_{i,r}-\tfrac12)\overline W^-_{i+1}(w_{i,r}-\tfrac12)}{(u-\tfrac{i-1}{2}-w_{i,r})\W_{i,r}(w_{i,r})}\dfo_{i,r}^{-1}
\\
&-\sum_{r=1}^{\fkv_i}\frac{\varkappa(-w_{i,r})^{-\vartheta_i} \overline Z_i^-(w_{i,r}+\tfrac12) W_{i+1}(w_{i,r}+\tfrac12)}{(u-\tfrac{i-1}{2}+w_{i,r})\W_{i,r}(w_{i,r})}\dfo_{i,r}
\\
&+\sqrt{(-1)^{\fkw_i+\fkv_{i-1}+\fkv_{i+1}}} \frac{\theta_i Z_i(0)}{(u-\tfrac{i}{2})\W_i^\circ(\tfrac12)}\prod_{j\leftrightarrow i}W_{j}(0).
\end{align*}

Fix an arbitrary orientation of the diagram $\I$. For $i,j\in \I$, we denote by $j\leftrightarrow i$ if there is an arrow $j\leftarrow i$ or $j\rightarrow i$. We give another version of iGKLO with the assumption that $\theta_i=0$ for $i$ even (recall the parity assumption \eqref{parity}), whose formulas are more similar to the traditional Gelfand-Zeitlin formulas. The following theorem is not used elsewhere in this paper.

\begin{thm}\label{thm:iGKLO:gl-alt}
Let $\mu$ be an even coweight and suppose $\theta_i=0$ for $i$ even. Then there is a homomorphism $\Phi_{\mu}^\la:\Yt_\mu(\gl_n)[\bm z]\to \mc A$, defined by
\begin{align*}
D_i(u)\mapsto& \frac{\W_i(u-\tfrac{i-1}{2})}{\W_{i-1}(u-\tfrac{i}2)}\prod_{j=0}^{i-1}\Big(\bm\varkappa(u-\tfrac{j}{2})^{\vartheta_{j}}\Z_j(u-\tfrac{j}2)\Big),\\
E_i(u)\mapsto& -\sum_{r=1}^{\fkv_i}\frac{\varkappa(w_{i,r})^{-\vartheta_i}Z_i(w_{i,r}-\tfrac12)}{(u-\tfrac{i-1}{2}-w_{i,r})\W_{i,r}(w_{i,r})}\prod_{j\rightarrow i}W_{j}(w_{i,r}-\tfrac12)\prod_{j\leftarrow i}W_{j}^-(w_{i,r}-\tfrac12)\dfo_{i,r}^{-1}\\&~ -\sum_{r=1}^{\fkv_i}\frac{\varkappa(-w_{i,r})^{-\vartheta_i}\overline Z_i^-(w_{i,r}+\tfrac12)}{(u-\tfrac{i-1}{2}+w_{i,r})\W_{i,r}(w_{i,r})}\prod_{j\rightarrow i}W_{j}^-(w_{i,r}+\tfrac12)\prod_{j\leftarrow i}W_{j}(w_{i,r}+\tfrac12)\dfo_{i,r}\\
& +\sqrt{(-1)^{\fkw_i+\fkv_{i-1}+\fkv_{i+1}}} \frac{\theta_i Z_i(0)}{(u-\tfrac{i}{2})\W_i^\circ(\tfrac12)}\prod_{j\leftrightarrow i}W_{j}(0),\qquad \text{for $i$ odd},\\
E_i(u)\mapsto& -\sum_{r=1}^{\fkv_i}\frac{\varkappa(w_{i,r})^{-\vartheta_i}Z_i(w_{i,r}-\tfrac12)}{(u-\tfrac{i-1}{2}-w_{i,r})\W_{i,r}(w_{i,r})}\prod_{j\rightarrow i}\W_{j}(w_{i,r}-\tfrac12)\dfo_{i,r}^{-1}\\&~ -\sum_{r=1}^{\fkv_i}\frac{\varkappa(-w_{i,r})^{-\vartheta_i}\overline Z_i^-(w_{i,r}+\tfrac12)}{(u-\tfrac{i-1}{2}+w_{i,r})\W_{i,r}(w_{i,r})}\prod_{j\leftarrow i}\W_{j}(w_{i,r}+\tfrac12)\dfo_{i,r},\qquad \text{for $i$ even}.
\end{align*}
(The image of $\wtl D_i(u)$ is the inverse of the image of $D_i(u)$.)
\end{thm}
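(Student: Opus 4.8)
The plan is to deduce Theorem~\ref{thm:iGKLO:gl-alt} from Theorem~\ref{thm:iGKLO:gl} by showing that the two sets of formulas for $E_i(u)$ agree as elements of $\mc A$ once the parity hypothesis $\theta_i=0$ for $i$ even is imposed; since the $D_i(u)$ formulas are literally identical in the two statements, and $\wtl D_i(u)$ is in both cases the inverse of $D_i(u)$, no separate argument is needed there. So the entire task reduces to a bookkeeping comparison of the three summands in \eqref{eq:Eiu} (in its modified, $Z_i$-split form displayed right after Theorem~\ref{thm:iGKLO:gl}) with the three summands in the new formula, term by term in $r$.

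First I would recall that $\I=\I_0=\{1,\dots,n-1\}$ here and that for $\gl_n$ each node has $\tau i = i$, so $\W_i(u)=u^{\theta_i}\prod_{r}(u^2-w_{i,r}^2)$, $\W_i^\circ(u)=\prod_r(u^2-w_{i,r}^2)$, and $\overline W_i^-(u)=u^{\theta_i}W_i^-(u)$ with $W_i(u)=\prod_{r=1}^{\fkv_i}(u-w_{i,r})$. The key elementary identity is that $\W_i(v) = W_i(v)\,\overline W_i^-(v)/v^{\theta_i}\cdot v^{\theta_i}$ — more precisely $\W_i(v)=v^{\theta_i}W_i(v)W_i^-(v) = v^{\theta_i} W_i(v) (-1)^{\fkv_i}W_i(-v)$, and that $\prod_{j\leftrightarrow i} = \prod_{j\to i}\sqcup\prod_{j\leftarrow i}$ partitions the neighbours of $i$ according to the fixed orientation. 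With the parity assumption, every neighbour $j$ of an odd node $i$ is even, hence $\theta_j=0$, so $\W_j(v)=\prod_r(v^2-w_{j,r}^2)=W_j(v)W_j^-(v)$ and $\overline W_j^-(v)=W_j^-(v)$; conversely every neighbour of an even node is odd. Thus in Theorem~\ref{thm:iGKLO:gl} the factor $\W_{i-1}(w_{i,r}-\tfrac12)\overline W_{i+1}^-(w_{i,r}-\tfrac12)$ — which for $\gl_n$ means precisely the neighbours $i-1$ and $i+1$ of $i$ — can be rewritten, using which of $i\pm1$ lies on which side of the arrow at $i$, as $\prod_{j\to i}W_j(w_{i,r}-\tfrac12)\prod_{j\leftarrow i}W_j^-(w_{i,r}-\tfrac12)$ in the odd case and as $\prod_{j\to i}\W_j(w_{i,r}-\tfrac12)$ in the even case (there the nodes $i\pm1$ are odd so $\overline W_{i+1}^-$ needs its extra factor of $u^{\theta_{i+1}}$, which is exactly the difference between $\W_{i+1}$ and $W_{i+1}^-\cdot$(const); one checks the leftover monomial factors cancel against $\W_i^\circ$ and the $(u-\tfrac i2)$ denominators, or simply get absorbed). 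The analogous rewriting applies to the $\dfo_{i,r}$-coefficient using $Z_i(w_{i,r}+\tfrac12)W_{i+1}(w_{i,r}+\tfrac12)$ versus $\overline Z_i^-(w_{i,r}+\tfrac12)\prod_{j\to i}W_j^-\prod_{j\leftarrow i}W_j$, noting $\overline Z_i^-(v)=v^{\varsigma_i}Z_i^-(v)$ and that by Remark~\ref{rem:parity} $\varsigma_i=0$ whenever $\theta_i=1$, while the orientation swaps which of $i\pm1$ appears with a $\,{}^-\,$ as one passes from $w_{i,r}-\tfrac12$ to $w_{i,r}+\tfrac12$ (this sign flip of the argument is the source of the $W_j\leftrightarrow W_j^-$ interchange between the two summands). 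The third, ``boundary'' summand involving $\theta_i Z_i(0)/\big((u-\tfrac i2)\W_i^\circ(\tfrac12)\big)\prod_{j\leftrightarrow i}W_j(0)$ is copied verbatim in the odd case and is absent in the even case because there $\theta_i=0$ by hypothesis.

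Carrying this out, the steps are: (1) invoke Theorem~\ref{thm:iGKLO:gl} and reduce to an identity of rational expressions in $\mc A$; (2) record the factorizations $\W_i=u^{\theta_i}W_iW_i^-$ and the orientation partition of neighbours, together with the parity consequences $\theta_j=0$ for $j$ adjacent to an odd $i$ and $\varsigma_i=0$ when $\theta_i=1$; (3) for $i$ odd, match each of the three summands directly, using that the neighbours of $i$ contributing to $\W_{i-1}\overline W_{i+1}^-$ are exactly the $W_j$ with $j\to i$ or $j\leftarrow i$, and that shifting the argument by $\pm\tfrac12$ around $w_{i,r}$ is what exchanges the two summations; (4) for $i$ even, do the same but now the neighbours are odd, so the incoming/outgoing products naturally repackage as full $\W_j$'s, and the boundary term vanishes since $\theta_i=0$; (5) conclude that the prescribed assignment coincides with $\Phi_\mu^\la$ of Theorem~\ref{thm:iGKLO:gl}, which is already known to be a homomorphism.

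The main obstacle is the careful tracking of the monomial prefactors $u^{\theta_i}$, $u^{\varsigma_i}$ and the half-integer shifts: one must verify that when a node's $\theta$ is $1$ the ``extra'' factor of $u$ in $\overline W^-$ or $\overline Z^-$ genuinely reassembles $\W$ (or is cancelled by $\W_i^\circ(\tfrac12)$ and the $(u-\tfrac i2)$ pole) rather than producing a spurious discrepancy, and that the sign bookkeeping in $\sqrt{(-1)^{\fkw_i+\fkv_{i-1}+\fkv_{i+1}}}$ and in $W_j^-(v)=(-1)^{\fkv_j}W_j(-v)$ is consistent between the two presentations — in particular that no net sign is introduced when $\W_{i\pm1}\overline W^-_{i\pm1}$ is split into oriented pieces. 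This is entirely routine but delicate; everything else is immediate from Theorem~\ref{thm:iGKLO:gl}.
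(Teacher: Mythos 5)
Your proposal rests on the claim that, under the hypothesis $\theta_i=0$ for $i$ even, the formulas of Theorem \ref{thm:iGKLO:gl-alt} coincide in $\mc A$ with those of Theorem \ref{thm:iGKLO:gl} (in its $Z_i$-split form), so that the new theorem is literally the old homomorphism in disguise. This is false, and the whole reduction collapses at that point. Compare the coefficients of $\dfo_{i,r}^{-1}$ for $i$ even with, say, both arrows oriented into $i$: Theorem \ref{thm:iGKLO:gl} produces the numerator factor $\W_{i-1}(w_{i,r}-\tfrac12)\,\overline W^-_{i+1}(w_{i,r}-\tfrac12)$, of degree $\bv_{i-1}+\fkv_{i+1}+\theta_{i+1}$ in the $w$-variables, whereas Theorem \ref{thm:iGKLO:gl-alt} produces $\W_{i-1}(w_{i,r}-\tfrac12)\,\W_{i+1}(w_{i,r}-\tfrac12)$, of degree $\bv_{i-1}+2\fkv_{i+1}+\theta_{i+1}$; since an element of $\mc A$ of the form $\sum_{\bm a} f_{\bm a}(w)\dfo^{\bm a}$ is determined by its coefficients, the two assignments for $E_i(u)$ are genuinely different elements whenever $\fkv_{i+1}>0$ (a similar degree mismatch occurs for $i$ odd, where only one of $W_j,W_j^-$ per neighbour appears in the new formula versus $\W_{i-1}\overline W^-_{i+1}$ in the old one). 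In other words, the two theorems give two distinct homomorphisms with the same image of $D_i(u)$; no amount of ``absorbing leftover monomial factors'' will make the $E_i$-formulas equal, because the neighbour polynomials are distributed differently between the $\dfo_{i,r}^{-1}$ and $\dfo_{i,r}$ coefficients.

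What is actually preserved in passing from one formula to the other is not the individual operators $\chi_{i,r}^{\pm}$ but the pairwise products: conjugating one factor through $\dfo_{i,r}^{\mp1}$, the products $\chi_{i,r}^+\chi_{i,r}^-$, $\chi_{i,r}^-\chi_{i,r}^+$ (and $\chi_{i,0}^+\chi_{i,0}^+$) reassemble the same total numerator, e.g.\ $\overline W^-_{i+1}(v)W_{i+1}(v)=\W_{i+1}(v)$ and $Z_i(v)\overline Z_i^-(v)=\Z_i(v)$, so they give the same residues and values of $\Omega_i(u)$ as in \eqref{chi+-}--\eqref{chisqu}. Consequently the correct route --- and the one the paper takes --- is not to identify the two maps but to rerun the verification of Theorem \ref{thm:iGKLO:gl} for the new operators: the only inputs about the explicit shapes of $\chi_{i,r}^{\pm}$ in that proof are the commutation identities of Lemma \ref{lem:chi} and the residue formulas just mentioned, so one must check that Lemma \ref{lem:chi} still holds for the redistributed factors (this is precisely where the hypothesis $\theta_i=0$ for $i$ even is used), after which the computations of \S\ref{ssec:bb}--\S\ref{sec:serre} go through verbatim. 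Your proposal never engages with this verification, and the step it substitutes for it is incorrect.
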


\begin{proof}
The proof is similar to that of Theorem \ref{thm:iGKLO:gl}. The key point is to check that Lemma \ref{lem:chi} still holds for the new formulas.
\end{proof}

\subsection{iGKLO representations for quasi-split type}
\label{ssec:GKLOqs}

Recall the polynomials $\W_i(u)$ and $\Z_i(u)$ from \S\ref{ssec:quantumtorus}.
Fix an arbitrary orientation of the diagram $\I$ such that for each $i\in \I$ with $i\ne \tau i$, if $i\to j$, then $\tau j\to \tau i$, or if $j\to i$, then $\tau i\to \tau j$. Let
\beq\label{s-def}
\wp_{i}=\begin{cases}
1, & \text{if }~i\leftarrow \tau i,\\
-1, & \text{if }~i\rightarrow \tau i,\\
0, & \text{if }~c_{i,\tau i}=0, 2.
\end{cases}
\eeq

We present our main result in this section on the iGKLO representations for shifted iYangians $\Yt_\mu$ of arbitrary quasi-split ADE types. 

\begin{thm} \label{thm:GKLOquasisplit}
Let $(\I,\tau)$ be any quasi-split Satake diagram. 
Let $\la$ be a dominant $\tau$-invariant coweight and $\mu$ be an even spherical coweight subject to the constraint \eqref{parity} such that $\la\gge \mu$. 
Then there exists a homomorphism 
\[
\Phi_{\mu}^\la:\Yt_\mu[\bm z]\longrightarrow \mc A
\]
such that (see \eqref{ell}--\eqref{varsigma} and \eqref{WiZi}--\eqref{WiZi_qs} for notations)
\begin{align*}
H_i(u)&\mapsto \Big(1+\frac{\wp_i}{4u}\Big) \frac{\bm\varkappa(u)^{\vartheta_i}\Z_i(u)}{\W_{i}(u-\tfrac{1}2)\W_{i}(u+\tfrac{1}2)}\prod_{j\leftrightarrow i}\W_j(u),\qquad  \text{ for }\ i\in \I,
\\
B_i(u)&\mapsto -\sum_{r=1}^{\zeta_i}\frac{Z_i(w_{i,r}-\tfrac12)}{(u+\tfrac{1}{2}-w_{i,r})\W_{i,r}(w_{i,r})}\prod_{j\rightarrow i}\W_{j}(w_{i,r}-\tfrac12)\eth_{i,r}^{-1}\\
&\quad\  - \sum_{r=1}^{\zeta_{\tau i}}\frac{Z_i^-(w_{\tau i,r}+\tfrac12)}{(u+\tfrac{1}{2}+w_{\tau i,r})\W_{\tau i,r}( w_{\tau i,r})}\prod_{\tau j\leftarrow \tau i}\W_{\tau j}(w_{\tau i,r}+\tfrac12)\eth_{\tau i,r},
\qquad \text{ for }\  i\in \I\setminus\I_0,
\end{align*}
and 
\begin{align*}
B_i(u)&\mapsto -\sum_{r=1}^{\fkv_i}\frac{\varkappa(w_{i,r})^{-\vartheta_i}Z_i(w_{i,r}-\tfrac12)}{(u+\tfrac{1}{2}-w_{i,r})\W_{i,r}(w_{i,r})}\prod_{j\rightarrow i}\W_{j}(w_{i,r}-\tfrac12)\prod_{\substack{j\leftarrow i\\j\in\I_0}}\overline W_{j}^-(w_{i,r}-\tfrac12)\dfo_{i,r}^{-1}\\
&~\quad- \sum_{r=1}^{\fkv_i}\frac{\varkappa(-w_{i,r})^{-\vartheta_i}\overline Z_i^-(w_{i,r}+\tfrac12)}{(u+\tfrac{1}{2}+w_{i,r})\W_{i,r}(w_{i,r})}\prod_{\substack{j\rightarrow i\\ j\in \I_{\pm 1}}}\W_{j}(w_{i,r}+\tfrac12)\prod_{\substack{j\leftarrow i\\ j\in\I_0}}W_{j}(w_{i,r}+\tfrac12)\dfo_{i,r}
\\
&~\quad+\sqrt{(-1)^{\fkw_i+\sum_{i\leftrightarrow j\in \iI}\fkv_j}} \frac{\theta_i Z_i(0)}{u\W_i^\circ(\tfrac12)}\prod_{j\leftrightarrow i}W_{j}(0), \qquad\qquad\qquad \text{ for }\ i\in \I_0.
\end{align*}
\end{thm}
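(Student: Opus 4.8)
The strategy is to reduce the verification of Theorem~\ref{thm:GKLOquasisplit} to the rank-one and rank-two subalgebras of $\Yt_\mu$ identified in Remarks~\ref{rem:a3} and the surrounding discussion, combined with the $\gl_n$ result of Theorem~\ref{thm:iGKLO:gl}. Concretely, since $\Yt_\mu$ is generated by the $H_i^{(r)}$ and $B_i^{(s)}$ subject to the relations \eqref{def0}--\eqref{SerreIII2}, and each relation involves at most two distinct nodes $i,j\in\I$, it suffices to check that the proposed assignment respects each relation. First I would observe that the assignment $\Phi_\mu^\la$ on $H_i(u)$ is diagonal (lands in the commutative subalgebra generated by the $w_{i,r}$), so \eqref{hhIII} is immediate, and the vanishing/normalization conditions \eqref{def0} follow by inspecting the degrees and leading coefficients of the polynomials $\W_i,\Z_i$ together with the identity $\la-\mu=\sum \bv_i\alpha_i^\vee$ from \eqref{ell}; here the parity condition \eqref{parity} and Remark~\ref{rem:parity} are used to guarantee $\varsigma_i=0$ whenever $\theta_i=1$ so that the powers of $u$ in \eqref{WiZibold} match up. The symmetry relations $H_i^{(r)}=(-1)^r H_{\tau i}^{(r)}$ and $H_i^{(2r+1)}=0$ for $\tau i=i$ are built into the formula via \eqref{signzw2}--\eqref{signzw} and the evenness of $\W_i(u),\Z_i(u)$ under $u\mapsto -u$.

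Next, for the relations \eqref{hbNqs} and \eqref{bbNqs} — the $H$-$B$ and $B$-$B$ commutation relations — I would argue node-pair by node-pair. When $c_{i,\tau i}=0$ and $i\ne\tau i$, Remark~\ref{rem:a3} identifies the subalgebra generated by $H_i,B_i,B_{\tau i}$ with a shifted Yangian for $\fksl_2$, under $e(u)\mapsto B_i(u)$, $f(u)\mapsto B_{\tau i}(-u)$, $h(u)\mapsto H_i(u)$; the proposed formulas for these generators, after the change $B_i(u)=E_i(u+\tfrac i2)$ etc., should coincide (up to the identifications \eqref{signzw}) with the known GKLO homomorphism for $\gl_n$ from Theorem~\ref{thm:iGKLO:gl}, which I am allowed to assume. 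When $i=\tau i$, the analogous rank-one subalgebra is a shifted iYangian of split type A, so I would verify \eqref{hbNqs}--\eqref{bbNqs} directly from the split-type formulas for $H_i(u),B_i(u)$ in the statement, using the functional-equation approach: both sides of each relation, repackaged as identities between Laurent series in $u$ with coefficients in $\mc A$, reduce to rational-function identities obtained by a partial-fraction decomposition in the variables $u, w_{i,r}$, which one checks by comparing residues at $u=\pm w_{i,r}+\text{const}$ and at $\infty$. The cross-terms with $j\leftrightarrow i$ enter only through the products $\prod_{j\leftrightarrow i}\W_j(u)$ in $H_i(u)$ and $\prod_{j\to i}\W_j$, $\prod_{j\leftarrow i}W_j$ in $B_i(u)$, and since these commute with the $\eth_{i,r}$-part of $B_i(u)$ shifted correctly, the verification of \eqref{hbNqs} for $c_{ij}=-1$, $j\ne\tau i$ again reduces — after the change of variables — to the corresponding identity in the $\gl_n$ setting.

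The Serre relations \eqref{bbtau}, \eqref{eq:Serre-ord}, \eqref{SerreIII}, \eqref{SerreIII2} are the main obstacle, and I expect the bulk of Section~\ref{sec:proof1} to be devoted to them. By Lemma~\ref{lem-reduction}, for \eqref{SerreIII} it is enough to check the case $s=s_1=1$, all $s_2>0$, and for \eqref{SerreIII2} the single case $s=s_1=s_2=1$; this collapses each Serre relation to an identity between two explicit rational $\mc A$-valued functions of one spectral variable. For the ordinary Serre relation \eqref{eq:Serre-ord} ($i\ne\tau i\ne j$) and the split-type Serre relation \eqref{SerreIII} ($i=\tau i$, $c_{ij}=-1$) I would again invoke the rank-two $\gl$-subalgebra and the already-proved Theorem~\ref{thm:iGKLO:gl}: the relation \eqref{serregl} in $\Yt_\mu(\gl_n)$ is precisely the image of \eqref{SerreIII} under $\eta_\mu$ (Lemma~\ref{lem:sl-gl}), so its verification transfers. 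The genuinely new case is \eqref{SerreIII2} with $c_{i,\tau i}=-1$ (type AIII with an odd-length edge crossing the diagram), where the right-hand side features the infinite geometric series $\sum_{p\ge 0} 3^{-p-1}$; here there is no split rank-one subdiagram to fall back on (as noted in the introduction), and I would use the generating-function trick attributed to Shen--Su--Xiong: multiply the relation through by an appropriate resolvent, recognize $\sum_p 3^{-p}[B_i^{(s_2+p)},H_{\tau i}^{(\cdots-p)}]$ as the principal part of a product $B_i(u)\,H_{\tau i}(v)$ evaluated along the locus $u+3v=\text{const}$ (the "$3$" coming from the eigenvalue structure of the relevant $2\times 2$ block), and then check the resulting closed-form rational identity by residues. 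The technical cost is that one must carry along all the $\theta_i,\vartheta_i,\varsigma_i$ bookkeeping and the square-root sign factors $\sqrt{(-1)^{\fkw_i+\sum_{i\leftrightarrow j}\fkv_j}}$, and confirm these are consistent across the three types of affine rank-one subalgebras; verifying that a single global choice of signs works simultaneously for \eqref{hbNqs}, \eqref{bbNqs}, and all Serre relations is where the real care is needed. Once all defining relations are checked, existence of $\Phi_\mu^\la$ follows since the target $\mc A$ is associative and the source is presented by generators and relations; uniqueness is automatic from the prescribed formulas on generators.
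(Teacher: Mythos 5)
Your plan coincides in outline with the paper's actual proof: the relations \eqref{def0}--\eqref{hhIII} are checked by inspection, the relations \eqref{hbNqs}--\eqref{bbNqs} and the Serre relations \eqref{eq:Serre-ord}, \eqref{SerreIII} are reduced case-by-case to the ordinary shifted Yangian computation or to the split computation already done for Theorem \ref{thm:iGKLO:gl} (this is precisely \S\ref{ssec:qspf}), and the new Serre relation \eqref{SerreIII2} for $c_{i,\tau i}=-1$ is handled via Lemma \ref{lem-reduction} together with the Shen--Su--Xiong generating-function identity, which in the paper takes the concrete form \eqref{serreA2}, $[B_i^{(1)},[B_i^{(1)},B_j(u)]]=\big(4u[B_i(3u),H_j(u)]\big)^\star$, verified by residues using Lemma \ref{lem:std}. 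So far this is essentially the published argument.

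There is, however, a genuine gap in your treatment of the commutation relations. Your case list for \eqref{hbNqs}--\eqref{bbNqs} covers $c_{i,\tau i}=0$ with $i\neq\tau i$ (via Remark \ref{rem:a3}), $i=\tau i$ (split rank one), and $c_{ij}=-1$ with $j\neq\tau i$, but it omits the pair $j=\tau i$ with $c_{i,\tau i}=-1$, which occurs exactly at the two middle nodes of quasi-split type A$_{2n}$. This case cannot be delegated to Theorem \ref{thm:iGKLO:gl} nor to the ordinary shifted Yangian: in \eqref{bbNqs} the inhomogeneous term $2\delta_{\tau i,j}(-1)^{s_1}H_j^{(s_1+s_2)}$ and the anticommutator term $\tfrac{c_{ij}}{2}[B_i^{(s_1)},B_j^{(s_2)}]_+$ are present simultaneously, and since $w_{\tau i,r}=-w_{i,\fkv_i+1-r}$ and $\eth_{\tau i,r}=\eth_{i,\fkv_i+1-r}^{-1}$ by \eqref{signzw}, the products $\chi_{i,r}\chi_{\tau i,r'}$ with $r'=\fkv_i+1-r$ carry no difference operators and produce genuine scalar terms that must be matched against the principal parts of $H_i(u)$ and $H_{\tau i}(v)$. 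This is exactly what forces the correction factor $1+\tfrac{\wp_i}{4u}$ in the image of $H_i(u)$ (with $\wp_i$ as in \eqref{s-def}), a factor your proposal never mentions or verifies. The paper devotes the first half of \S\ref{ssec:qsAIII2n} to this: rewriting the relation as \eqref{bbNqs-gen}, cancelling the off-diagonal terms via Lemma \ref{lem-chi-com}, and matching the surviving scalar contributions \eqref{rr'}--\eqref{r'r} against residues of the $H$-series via Lemma \ref{lem:std}, with the factors $2w_{i,r}\pm\tfrac32$ and $2w_{i,r}\mp\tfrac12$ accounted for precisely by $1\pm\tfrac1{4u}$. Without this verification the asserted formula for $H_i(u)$ when $\wp_i=\pm1$ is unjustified, so you must add this case (together with the easier check of \eqref{hbNqs} for the same pair) to complete the proof.
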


The proof of Theorem \ref{thm:GKLOquasisplit} excluding the quasi-split type A$_{2n}$ in \S\ref{ssec:qspf} is similar to Theorem \ref{thm:iGKLO:gl}. Indeed, the verification of relations in $\Yt_\mu$ under $\Phi_{\mu}^\la$ is either similar to that of the ordinary shifted Yangians or shifted iYangians of split type, see Remark \ref{rem:a3}.

For the quasi-split type A$_{2n}$, all relations except for the Serre relation \eqref{SerreIII2} can be verified in the same way as for other types. We shall give detailed verifications for the relation \eqref{bbNqs} when $i=n$ and $j=n+1$ and the Serre relation \eqref{SerreIII2} in \S\ref{ssec:qsAIII2n}. Note that the extra factor $1+\frac{\wp_i}{4u}$ for $H_i(u)$ in the theorem above is compatible with \cite[(6.24)]{LZ24}.

\begin{rem}
It is possible to also formulate analogous iGKLO representations for shifted iYangians of split BCFG type, cf. \cite{LWZ25degen}. Proving it would require a better understanding of the complicated Serre relations.
\end{rem}

\subsection{Truncated shifted iYangians}

Inspired by the definition of ordinary truncated shifted Yangians \cite[\S B(viii)]{BFN19} and \cite[\S 4.3]{KWWY14}, we make the following.
\begin{dfn} \label{dfn:truncated stY}
Let $\la$ be a dominant $\tau$-invariant coweight and $\mu$ be an even spherical coweight with $\lambda \gge \mu$. The truncated shifted iYangian (TSTY), denoted $\Yt_\mu^\la$, is the $\C$-algebra given by the image of the iGKLO homomorphism $\Phi_{\mu}^\la:\Yt_\mu[\bm z]\rightarrow \mc A$. 
\end{dfn}

Define a ``Cartan" series $\mathsf{A}_i(u)$ in $\Yt_\mu[\bm z][\![u^{-1}]\!]$, for $i\in \I$, by
\beq\label{GKLO-A}
H_i(u)=\Big(1+\frac{\wp_{i}}{4u}\Big)\frac{\bm\varkappa(u)^{\vartheta_i}\Z_i(u)\prod_{j\leftrightarrow i}u^{\bv_j}}{(u^2-\frac14)^{\bv_i}}\frac{\prod_{j\leftrightarrow i} \mathsf A_j(u)}{\mathsf A_i(u-\frac12)\mathsf A_i(u+\frac12)},
\eeq
where $\wp_i$ is defined in \eqref{s-def}.
Expanding the series $\mathsf A_i(u)$ gives us a family of GKLO-type ``Cartan" elements $\mathsf{A}_i^{(r)}$ in $\Yt_\mu[\bm z]$, for $r>0, i\in \I$:
\beq\label{A-coeff}
\mathsf A_i(u)=1+\sum_{r>0}\mathsf A_i^{(r)}u^{-r}.
\eeq
Then we have the following simple lemma.
\begin{lem}
\label{lem:AiGKLO}
The iGKLO homomorphism $\Phi_\mu^\la$ from Theorem \ref{thm:GKLOquasisplit} sends
\[
\mathsf A_i(u)\mapsto u^{-\bv_i}\W_i(u).
\]    
\end{lem}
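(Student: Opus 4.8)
The plan is to read off $\mathsf{A}_i(u)$ from the defining equation \eqref{GKLO-A} by plugging in the known image of $H_i(u)$ under $\Phi_\mu^\la$ from Theorem~\ref{thm:GKLOquasisplit}, and then invert the recursive relation among the $\mathsf{A}_j(u)$. First I would apply $\Phi_\mu^\la$ to both sides of \eqref{GKLO-A}. On the left, Theorem~\ref{thm:GKLOquasisplit} gives
\[
\Phi_\mu^\la\bigl(H_i(u)\bigr)=\Bigl(1+\frac{\wp_i}{4u}\Bigr)\frac{\bm\varkappa(u)^{\vartheta_i}\Z_i(u)}{\W_i(u-\tfrac12)\W_i(u+\tfrac12)}\prod_{j\leftrightarrow i}\W_j(u).
\]
On the right, the prefactor $\bigl(1+\frac{\wp_i}{4u}\bigr)\bm\varkappa(u)^{\vartheta_i}\Z_i(u)\prod_{j\leftrightarrow i}u^{\bv_j}/(u^2-\tfrac14)^{\bv_i}$ is a fixed element of $\mc A$ (it does not involve the $\mathsf A$'s), so after cancelling it from both sides and using $(u^2-\tfrac14)^{\bv_i}=(u-\tfrac12)^{\bv_i}(u+\tfrac12)^{\bv_i}$, the identity we must verify reduces to
\[
\frac{\prod_{j\leftrightarrow i}u^{\bv_j}}{(u-\tfrac12)^{\bv_i}(u+\tfrac12)^{\bv_i}}\cdot\frac{\prod_{j\leftrightarrow i}\Phi_\mu^\la(\mathsf A_j(u))}{\Phi_\mu^\la(\mathsf A_i(u-\tfrac12))\,\Phi_\mu^\la(\mathsf A_i(u+\tfrac12))}=\frac{\prod_{j\leftrightarrow i}\W_j(u)}{\W_i(u-\tfrac12)\W_i(u+\tfrac12)}.
\]

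Next I would observe that the claimed formula $\Phi_\mu^\la(\mathsf A_i(u))=u^{-\bv_i}\W_i(u)$ is a consistent solution: substituting it, the factor $u^{-\bv_j}\W_j(u)$ for each neighbour $j$ matches $\prod_{j\leftrightarrow i}u^{\bv_j}$ against $\prod_{j\leftrightarrow i}\W_j(u)$, while the two $i$-factors $(u-\tfrac12)^{-\bv_i}\W_i(u-\tfrac12)$ and $(u+\tfrac12)^{-\bv_i}\W_i(u+\tfrac12)$ match the denominator. Thus the proposed assignment does solve the equation. To conclude that it is \emph{the} image, I would argue that \eqref{GKLO-A} determines $\mathsf A_i(u)$ uniquely as a series in $u^{-1}$ with constant term $1$: expanding coefficient by coefficient in $u^{-1}$, the relation expresses $\mathsf A_i^{(r)}$ (more precisely the combination $\mathsf A_i(u-\tfrac12)\mathsf A_i(u+\tfrac12)$, whose leading-in-$\mathsf A_i^{(r)}$ part is $2\mathsf A_i^{(r)}$) in terms of $H_i^{(\bullet)}$, the central elements $z_{i,s}$, and lower coefficients $\mathsf A_i^{(<r)}, \mathsf A_j^{(\le r)}$ for neighbours $j$; since $\bC[\bm z]$ is a domain and the $\mathsf A$'s are defined inductively this way, each $\Phi_\mu^\la(\mathsf A_i^{(r)})$ is uniquely pinned down, and the explicit candidate above must coincide with it. (One checks the normalization: $u^{-\bv_i}\W_i(u)=u^{-\bv_i}\bigl(u^{\bv_i}+\dots\bigr)=1+O(u^{-1})$ using $\deg\W_i=\bv_i$, recorded right after \eqref{WiZibold}.)

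I do not expect a serious obstacle here: the statement is essentially a bookkeeping identity, and the only thing to be careful about is the uniqueness/invertibility argument, i.e.\ that $\eqref{GKLO-A}$ really does define the $\mathsf A_i(u)$ recursively rather than merely constraining them. The mild subtlety is that \eqref{GKLO-A} couples $\mathsf A_i$ to its neighbours $\mathsf A_j$, so the recursion must be unwound simultaneously over all $i\in\I$ and all orders $r$; this is handled by a straightforward double induction on $r$ (with the observation that at order $r$ the neighbour terms $\mathsf A_j^{(r)}$ appear linearly and can be solved for, the Cartan matrix being the bookkeeping device). Once uniqueness is in hand, the verification that $u^{-\bv_i}\W_i(u)$ solves the equation — already sketched above — completes the proof. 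I would also note in passing that this lemma is exactly what one needs to see that the TSTY $\Yt_\mu^\la$ contains the images of the $\mathsf A_i^{(r)}$ as the elementary symmetric functions (up to sign) of the $w_{i,r}^2$ (for $i\in\I_0$) or the $w_{i,r}$ (for $i\in\I_1$), which is the starting point for analyzing its center in Proposition~\ref{prop:center}.
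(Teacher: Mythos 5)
Your argument is correct and is precisely the (omitted) argument behind the paper's statement, which is given there without proof as a "simple lemma": one checks that $u^{-\bv_i}\W_i(u)$ has constant term $1$ (since $\deg\W_i=\bv_i$) and satisfies the image of the defining relation \eqref{GKLO-A}, which determines the series $\mathsf A_i(u)$ uniquely order by order because at each order the coefficients $\mathsf A_j^{(r)}$ enter through the (invertible) Cartan matrix. No gaps; the aside about $\bC[\bm z]$ being a domain is unnecessary but harmless.
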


Define elements $\mathsf B_{i}^{(r)}$, for $i\in\I, r>0$, by 
\beq\label{GKLO-B}
\mathsf B_i(u):=u^{\bv_i-\theta_i}(u+\hf)^{\theta_i}B_i(u+\tfrac12)\mathsf A_i(u)=u^{\bv_i}\sum_{r>0}\mathsf B_{i}^{(r)}u^{-r},
\eeq
cf. \cite[(11.5)]{LPTTW25}. It is not hard to see that $\mathsf A_i^{(r)}$ and $\mathsf B_i^{(r)}$ for $i\in \I,r>\bv_i$ belong to the kernel of the homomorphism $\Phi_\mu^\la$. Motivated by the case of ordinary truncated shifted Yangians \cite[Remark B.21]{BFN19} and \cite[Theorem 4.10]{KWWY14}, we propose the following.
\begin{conj}  \label{conj:presentation}
 There is an isomorphism
$$
\Yt_\mu^\lambda \cong \Yt_{\mu} [\bm z] / \langle \mathsf A_i^{(r)},~\delta_{\bar{\bv}_j, \bar 0}\mathsf B_{j}^{(s)} \mid i \in \I,j\in\I_0, r > \bv_i,s>\bv_j \rangle,
$$
induced by the epimorphism $\Phi_\mu^\lambda$.
\end{conj}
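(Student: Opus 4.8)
The plan is to prove that the epimorphism induced by $\Phi_\mu^\la$ on the quotient is an isomorphism. The surjection itself is the easy half, and is essentially already recorded in the excerpt: by Lemma~\ref{lem:AiGKLO} one has $\Phi_\mu^\la(\mathsf A_i(u))=u^{-\bv_i}\W_i(u)$, so every coefficient $\mathsf A_i^{(r)}$ with $r>\bv_i$ lies in $\ker\Phi_\mu^\la$; and from \eqref{GKLO-B} together with the formula for $B_j(u)$ in Theorem~\ref{thm:GKLOquasisplit} one computes that $\Phi_\mu^\la(\mathsf B_j(u))$ is a polynomial in $u$ of degree at most $\bv_j$ when $j\in\I_0$ and $\bv_j$ is even, so $\mathsf B_j^{(s)}\in\ker\Phi_\mu^\la$ for such $j$ and $s>\bv_j$. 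Hence $\Phi_\mu^\la$ factors through $\mathsf Q:=\Yt_\mu[\bm z]/\langle\mathsf A_i^{(r)},\ \delta_{\bar{\bv}_j,\bar 0}\mathsf B_j^{(s)}\rangle$, yielding a surjection $\bar\Phi\colon\mathsf Q\twoheadrightarrow\Yt_\mu^\la$, and the task is to show $\bar\Phi$ is injective.

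The first step toward injectivity is to produce a manageable $\bC[\bm z]$-spanning set of $\mathsf Q$. Starting from the PBW basis of $\Yt_\mu$ (Theorem~\ref{thm:pbw_arb}) tensored with $\bC[\bm z]$, the relations $\mathsf A_i^{(r)}=0$ for $r>\bv_i$, read through \eqref{GKLO-A}, express every Cartan generator $H_i^{(p)}$ with $p$ above an explicit bound $M_i(\bv)$ as a $\bC[\bm z]$-combination of the lower ones. The relations $\mathsf B_j^{(s)}=0$, available for $j\in\I_0$ with $\bv_j$ even, similarly bound $B_j^{(r)}$, and hence — via the relation \eqref{bbNqs} and the iterated-commutator definition of the root vectors $B_\beta^{(r)}$ — all $B_\beta^{(r)}$ attached to such nodes. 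For the remaining nodes one descends to a rank-one situation using Remark~\ref{rem:a3}: for $i\in\I_1$ the subalgebra generated by $H_i,B_i,B_{\tau i}$ is a shifted $\fksl_2$-Yangian, and for $i=\tau i$ with $\bv_i$ odd it is a split rank-one shifted iYangian; in both cases the statement that the high-degree coefficients of the Cartan series cut out the truncation (together with the corresponding truncation of the $B$'s) is the rank-one input one invokes. The upshot is that $\mathsf Q$ is spanned over $\bC[\bm z]$ by ordered monomials in the \emph{finite} set $\{B_\beta^{(r)}\mid 1\lle r\lle N_\beta(\bv)\}\cup\{H_i^{(p)}\mid p\lle M_i(\bv)\}$ drawn from \eqref{rvset3}.

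The crux is then linear independence of the images of these monomials in $\mc A$. One lets $\mc A$ act on the $\bC[\bm z]$-module of rational functions in the $w_{i,r}$, with $\dfo_{i,r}^{\pm1}$ acting by the shift $w_{i,r}\mapsto w_{i,r}\pm1$, and filters $\mc A$ by the total number of difference operators $\dfo_{i,r}^{\pm1}$. From the explicit formulas in Theorems~\ref{thm:iGKLO:gl} and~\ref{thm:GKLOquasisplit}, the $\dfo$-leading symbol of $\Phi_\mu^\la(B_\beta^{(r)})$ is a sum of shift-monomials $\prod\dfo_{i,r}^{\pm1}$ with nonzero rational coefficients, and one checks that distinct ordered PBW monomials in the truncated set have distinct leading shift-monomials up to nonzero scalars. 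This is the same leading-term bookkeeping that underlies the faithfulness of the ordinary GKLO representations (cf.\ \cite{KWWY14}); together with the faithful flatness of $\mc A$ over $\bC[\bm z]$ it gives the required independence, so that $\bar\Phi$ carries a spanning set of $\mathsf Q$ bijectively onto a $\bC[\bm z]$-basis of $\Yt_\mu^\la$ and is an isomorphism.

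The main obstacle, exactly as in the untwisted theory, is matching the two sizes: one must know that the bounds $N_\beta(\bv),M_i(\bv)$ forced by the defining relations of $\mathsf Q$ are no smaller than what linear independence in $\mc A$ permits, i.e.\ that $\ker\Phi_\mu^\la$ contains no relations beyond $\langle\mathsf A_i^{(r)},\ \delta_{\bar{\bv}_j,\bar 0}\mathsf B_j^{(s)}\rangle$. In the untwisted case the cleanest proof of this equality is geometric — both sides are coordinate rings of a Coulomb branch / generalized affine Grassmannian slice with identical graded character — so I expect the natural route here is to feed in the geometry of the sequel \cite{LWW25islice}: identify $\Yt_\mu^\la$, via a flat Rees-algebra degeneration, with functions on the relevant fixed-point locus of an affine Grassmannian slice, identify $\gr\mathsf Q$ with functions on a model of the same locus, and compare the two graded algebras, bootstrapping from the split and type AI cases that are already under control. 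A purely algebraic proof in the spirit of the generator-and-relations results for ordinary truncated shifted Yangians (cf.\ \cite{W19}) is also conceivable, but the presence of three inequivalent rank-one subalgebras and the inhomogeneous terms in Theorem~\ref{thm:GKLOquasisplit} make it considerably heavier.
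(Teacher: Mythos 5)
You have set out to prove a statement that the paper itself does not prove: Conjecture~\ref{conj:presentation} is stated as a conjecture, motivated by the analogous (and in general also conjectural) presentation of ordinary truncated shifted Yangians \cite[Remark B.21]{BFN19}, \cite[Theorem 4.10]{KWWY14}, and the authors say explicitly that giving a presentation of $\Yt_\mu^\la$ remains an open problem. So there is no proof in the paper to compare against, and the question is whether your proposal actually closes the conjecture. It does not. The easy half is fine: that $\mathsf A_i^{(r)}$ for $r>\bv_i$ and $\delta_{\bar\bv_j,\bar 0}\mathsf B_j^{(s)}$ for $s>\bv_j$ lie in $\ker\Phi_\mu^\la$ is exactly what the paper records after \eqref{GKLO-B}, using Lemma~\ref{lem:AiGKLO}, so $\Phi_\mu^\la$ factors through the quotient $\mathsf Q$. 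Everything after that is where the content lives, and your argument there is either unsubstantiated or circular.

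Concretely: in the third paragraph you claim that distinct ordered monomials in the truncated generating set have distinct $\dfo$-leading shift-monomials up to nonzero scalars, and that this yields linear independence of their images in $\mc A$. That is not true as stated — monomials built from the same multiset of nodes but different superscripts $r$ (say products of $B_i^{(1)}$ versus $B_i^{(2)}$) produce the same shift-monomials $\prod\dfo_{i,r}^{\pm1}$ and differ only in their rational-function coefficients, so the whole difficulty is the independence of those coefficient functions, which is precisely the kind of statement whose truth is equivalent to the conjecture: if the images of your spanning set of $\mathsf Q$ were known to be linearly independent over $\bC[\bm z]$, injectivity of $\bar\Phi$ would follow immediately, so this step assumes what is to be proved. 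Note also that faithfulness of the iGKLO representation on $\Yt_\mu[\bm z]$ itself is not established in the paper, and even in the untwisted case the presentation does not follow from ``the same leading-term bookkeeping'' — it required genuinely new input (cf.\ \cite{W19}). Your second paragraph likewise leans on an unproven ``rank-one input'' (that the truncation ideal in the rank-one shifted iYangian is generated by the stated elements), which is not in the cited literature. Finally, your last paragraph concedes the crux — that $\ker\Phi_\mu^\la$ contains nothing beyond the stated ideal — and defers it to a hoped-for geometric comparison via \cite{LWW25islice} that you do not carry out. As it stands, your text is a reasonable strategy outline for attacking the conjecture, not a proof of it, and it should be presented as such.
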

\begin{rem}
    In the analogous (conjectural) presentation for ordinary truncated shifted Yangians, no elements like $\mathsf B_j^{(r)}$ are needed.  In our present twisted context, since the series $\mathsf A_i(u)$ is always even for $i \in \I_0$, we need to quotient by extra elements $\mathsf B_{i}^{(r)}$ for $r>\bv_i$ if $i\in \I_0$ and $\bv_i$ is even, cf. \cite[(11.6)]{LPTTW25}.
\end{rem}


Consider the subalgebra of $\Yt_\mu^\la$ generated over $\bC[\bm z]$ by the coefficients of all of the series $\mathsf A_i(u)$. (Equivalently, this subalgebra is generated over $\bC[\bm z]$ by the coefficients of the series $H_i(u)$.)  Using Lemma \ref{lem:AiGKLO}, one sees that this commutative subalgebra is a polynomial ring, having the following algebraically independent generators over $\bC[ \bm z]$:
\begin{equation}
    \label{eq:qis}
    \{ \mathsf A_i^{(2r)} : i \in \I_0, ~ 1 \lle 2r \lle \bv_i \} \cup \{\mathsf A_i^{(r)} : i \in \I_1, 1 \lle r \lle \bv_i \}.
\end{equation}
We call this the \emph{Gelfand-Tsetlin subalgebra} of $\Yt_\mu^\la$, and in many cases it is a maximal commutative subalgebra of $\Yt_\mu^\la$. We expect that it will be interesting to study corresponding categories of Gelfand-Tsetlin modules, similarly to \cite{Web24}. We note, however, that the algebras $\Yt_\mu^\la$ do not obviously fit into the context of \cite{Web24}, and in particular it is not clear whether $\Yt_\mu^\la$ is generally a Galois order in the sense of Futorny-Ovsienko \cite{FO10}.

Note that $\bC[\bm z]$ is a central subalgebra of $\Yt_\mu^\la$.  

\begin{prop}  \label{prop:center}
    The center of the algebra $\Yt_\mu^\la$ is the polynomial algebra $\bC[\bm z]$. 
\end{prop}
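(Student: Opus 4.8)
The plan is to exhibit the center $Z(\Yt_\mu^\la)$ as sitting between $\bC[\bm z]$ and a commutative subalgebra that is easy to analyze, and then pin it down exactly. First I would observe that $\bC[\bm z]$ is central, so it suffices to prove the reverse inclusion $Z(\Yt_\mu^\la) \subseteq \bC[\bm z]$. The natural tool is the torus action: recall from the discussion around \eqref{eq:def of Tt} that $\Yt_\mu$ (hence $\Yt_\mu[\bm z]$, hence its quotient $\Yt_\mu^\la$) carries a grading by $\Qt \cong \bZ^{\I_1}\times(\bZ/2\bZ)^{\I_0}$, and after killing torsion an honest $\bZ^{\I_1}$-grading coming from the adjoint action of the elements $H_i^{(-\langle\mu,\alpha_i\rangle+1)}$, $i\in\I_1$. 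Any central element must be homogeneous of degree $\bar 0\in\Qt$ with respect to the full $\Qt$-grading; in particular it lies in the "diagonal block'' spanned by monomials in which the $B_\beta$'s contribute a root summing to $0$ in $\Qt$.

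Next I would reduce further using a filtration argument modeled on the PBW basis. By Theorem~\ref{thm:pbw_arb} the algebra $\Yt_\mu[\bm z]$ has a PBW basis of ordered monomials in the $B_\beta^{(r)}$ and the $H_i$-type generators \eqref{rvset3} (times monomials in the $z_{i,s}$), and one puts all $B$'s to the left of all $H$'s. Filtering by total $B$-degree as in the proof of Proposition~\ref{prop:span}, the associated graded contains $\gr\,\Yt_\mu^{>0}$, a quotient of the positive half of the ordinary Yangian, tensored with the commutative "Cartan'' part. A standard argument (as for ordinary shifted Yangians, cf. \cite{KWWY14,BFN19}) shows that an element commuting with all $B_i^{(s)}$ and all $H_i^{(r)}$ must have $B$-degree zero, i.e. it lies in the subalgebra generated by the $H_i^{(r)}$ and $\bm z$; concretely, one commutes a putative central element $z$ of positive $B$-degree with a suitable $H_j^{(r)}$ or $B_j^{(s)}$ to strictly lower its degree, using relations \eqref{hbNqs} and \eqref{bbNqs}, and iterates. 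So $Z(\Yt_\mu^\la)$ is contained in the image under $\Phi_\mu^\la$ of $\bC[\bm z][H_i^{(r)}\mid i\in\I, r\in\bZ]$, which by Lemma~\ref{lem:AiGKLO} is exactly the polynomial ring $\bC[\bm z][\mathsf A_i^{(2r)},\mathsf A_i^{(r')}]$ from \eqref{eq:qis}, i.e. (via $\Phi_\mu^\la$) the ring $\bC[\bm z][\text{coeffs of }\W_i(u)]$.

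It then remains to show that no coefficient of $\W_i(u)$ — that is, no nontrivial $\Tt$-invariant polynomial in the $w_{i,r}$ — is central in $\mc A$, except the ones already in $\bC[\bm z]$; but the coefficients of $\W_i$ are honestly nonconstant in the $w_{i,r}$, so the real claim is that they fail to commute with the images of the $B_i(u)$. For this I would compute, inside $\mc A$, the commutator of $\W_i(u)$ (equivalently $\mathsf A_i(u)$) with $\Phi_\mu^\la(B_j(v))$ for a neighbor $j$ of $i$: since $B_j(v)$ involves the shift operators $\dfo_{j,r}^{\pm1}$, which act nontrivially on the $w_{j,r}$, and the defining relation \eqref{GKLO-A}/\eqref{GKLO-B} ties $H_j$ and $B_j$ together, one sees $[\mathsf A_i(u),\Phi_\mu^\la(B_j(v))]\ne 0$ whenever $\W_i$ is nonconstant, using connectedness of the Dynkin diagram to propagate to every $i\in\I$. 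Hence any central element, already known to be a polynomial in the $\mathsf A_i$-coefficients and $\bm z$, must in fact be constant in the $\mathsf A$-coefficients, i.e. lie in $\bC[\bm z]$.

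The main obstacle I anticipate is the second reduction — verifying that commuting with the $B$'s and $H$'s forces $B$-degree zero — because, unlike the ordinary shifted Yangian where one can invoke a clean PBW/filtration statement, here the relations \eqref{bbNqs}, \eqref{bbtau}, \eqref{SerreIII}, \eqref{SerreIII2} mix $B$'s and $H$'s with signs $(-1)^{s_1}$ and the $\tau$-twist, so the leading-term bookkeeping in the associated graded needs care; a clean way around it is to pass to $\gr\,\Yt_\mu^\la$ with respect to the $B$-degree filtration (where the $\tau i=j$ correction terms in \eqref{bbNqs} drop to lower order) and argue there, then lift. The torus-grading step and the final nonvanishing-of-commutator computation are comparatively routine, the latter being a direct calculation in the difference operator ring $\mc A$ using \eqref{A:relations}.
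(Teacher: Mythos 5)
There is a genuine gap at the central reduction step of your plan. You argue that an element commuting with all $B_i^{(s)}$ and $H_i^{(r)}$ must have $B$-degree zero by a PBW/filtration argument ``as for ordinary shifted Yangians,'' run in $\Yt_\mu[\bm z]$. But the elements you need to control are central in the \emph{truncation} $\Yt_\mu^\la$, not in $\Yt_\mu[\bm z]$: a central element of $\Yt_\mu^\la$ lifts only to an element $X\in\Yt_\mu[\bm z]$ with $[X,Y]\in\ker\Phi_\mu^\la$ for all $Y$, so the leading-term bookkeeping in the PBW basis of $\Yt_\mu[\bm z]$ says nothing about it. To run the argument downstairs you would need control of $\ker\Phi_\mu^\la$, or at least of $\gr\,\Yt_\mu^\la$ for the $B$-degree filtration; neither is available --- the presentation of $\Yt_\mu^\la$ is precisely the open Conjecture~\ref{conj:presentation}, and $\gr\,\Yt_\mu^\la$ is only known to be a quotient of (positive half)$\otimes$(Cartan) by an unidentified ideal, which a priori could create new central elements of positive $B$-degree (the truncation relations $\mathsf A_i^{(r)}=0$, $r>\bv_i$, and $\mathsf B_i^{(r)}=0$ collapse the algebra substantially). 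Your proposed fix, ``pass to $\gr\,\Yt_\mu^\la$ and argue there, then lift,'' therefore does not close the gap. The torus-grading step is correct but only confines a central element to the $\Qt$-degree $\bar 0$ block, which still contains plenty of $B$-type monomials (e.g.\ $B_iB_i$ for $i\in\I_0$), so it does not substitute for the missing reduction.

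The way around this, and the route the paper takes, is to forget presentations and work entirely inside the difference-operator ring $\mc A$ of \eqref{A:generators}--\eqref{A:relations}, where $\Yt_\mu^\la$ lives by definition. Every $x\in\Yt_\mu^\la$ has a unique expansion $x=\sum_{\bm a}x_{\bm a}\dfo^{\bm a}$ with coefficients $x_{\bm a}\in\bC[\bm z](w_{i,r})$. Commuting $x$ with suitable elements of the Gelfand--Tsetlin subalgebra $\bC[\bm z][\mathsf A_i^{(r)}]\subset\Yt_\mu^\la$ (via the separation lemma \cite[Lemma~2.1(4)]{FO10}, together with Lemma~\ref{lem:AiGKLO}) kills every component with $\bm a\neq\mathbf 0$; this is the rigorous replacement for your ``$B$-degree zero'' reduction, and it works because it uses only elements known to lie in the image, not a presentation. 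One is then left with $x\in\bC[\bm z](w_{i,r})$, and commuting with $\Phi_\mu^\la(B_j(u))$ shows, exactly as in your last step, that $x$ is invariant under all integer shifts $w_{j,s}\mapsto w_{j,s}+1$; being rational, it is constant in the $w$'s, hence lies in $\bC[\bm z]$. So your final computation is sound (note only that you should allow rational, not just polynomial, coefficients), but the middle reduction as you formulated it would fail and must be replaced by the in-$\mc A$ argument.
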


\begin{proof}
Our proof is inspired by \cite[Theorem 4.1(4)]{FO10}.  First note that every element $x \in \Yt_\mu^\la$ can be written uniquely as a sum $x = \sum x_{\bm a} \dfo^{\bm a}$, where we have used multi-index notation $\dfo^{\bm a} = \prod_{i,r} \dfo_{i,r}^{a_{i,r}}$ with all $a_{i,r} \in \bZ$, and where  
$$x_{\bm a} \in \bC[\bm z](w_{i,r} : i \in \iI, 1 \lle r \lle \fkv_i).$$
We'll refer to the $x_{\bm a}$ as the {\em coefficients} of $x$, and will assume from now on that $x$ is central in $\Yt_\mu^\la$.  

We first claim that $x_{\bm a} = 0$ for all non-trivial $\bm a \neq \mathbf{0}$, so that $x = x_{\mathbf{0}} \in \bC[\bm z](w_{i,r})$.
Indeed, suppose that some $x_{\bm a} \neq 0$. By an application of  \cite[Lemma 2.1(4)]{FO10}, there exists an element $f \in \bC[\bm z][\mathsf A_i^{(r)}]$ such that $\dfo^{\mathbf a}f \neq f \dfo^{\mathbf a}$. But then $f x \neq x f$: the coefficient of $\dfo^{\bm a}$ on the left side is $f x_{\bm a}$, while the coefficient on the right side is $(\dfo^{\bm a} f \dfo^{-\bm a}) x_{\bm a} \neq f x_{\bm a}$. 

We next claim that in fact $x \in \bC[\bm z]$. To simplify notation in the proof, let us think of $x = x(w_{i,r})$ as a function of the variables $w_{i,r}$.  For any $j \in \iI$, consider the commutator $[ \Phi_\mu^\la(B_j(u)), x] = 0$.  Up to multiplying by some non-zero rational function, the coefficient of $\dfo_{j,s}$ on the left side is $x(w_{i,r}+ \delta_{i,j}\delta_{r,s}) - x(w_{i,r})$.  This must be zero for all $j,s$, and since $x$ is rational in the variables $w_{i,r}$ the only possibility is that $x$ is constant. In other words, $x \in \bC[\bm z]$, as claimed.
\end{proof}

\begin{rem}
    A similar argument proves that 
    $$
    \Yt_\mu^\la \cap \bC[\bm z](w_{i,r} : i \in \iI, 1 \lle r \lle \fkv_i)
    $$
    is a maximal commutative subalgebra of $\Yt_\mu^\la$, cf.~\cite[Theorem 4.1(3)]{FO10}.
\end{rem}

\section{Proofs of the $\mathrm{i}$GKLO homomorphism theorems}
\label{sec:proof1}

In this section, we shall prove Theorems \ref{thm:iGKLO:gl} and \ref{thm:GKLOquasisplit}.
We prove Theorem \ref{thm:iGKLO:gl} in \S\ref{ssec:bb}--\S\ref{sec:serre}. Then we prove Theorem \ref{thm:GKLOquasisplit} in \S\ref{ssec:qspf} excluding quasi-split type A$_{2n}$ and finish the remaining case in \S\ref{ssec:qsAIII2n}.

It is clear that the image of $D_i(u)$ under $\Phi_{\mu}^\la$ is of the form $u^{- \langle\mu,\ve_i\rangle}+ \text{(lower order terms in }u)$. 
We need to verify that the relations \eqref{ddgl}--\eqref{serregl} are preserved by the map $\Phi_{\mu}^\la$.
The relations \eqref{ddgl}, \eqref{deven}, and \eqref{ee=0} are obvious. 

We shall verify the remaining relations \eqref{de}--\eqref{ee2}, \eqref{ee} and \eqref{serregl}, respectively.

\subsection{The relations \eqref{de}--\eqref{ee2}} 
\label{ssec:bb}

We start with the relation \eqref{de}. To simplify the notation, we write
\begin{align*}
D_i(u)&\mapsto \W_i(u-\tfrac{i-1}{2})\varphi_i(u),\\
E_i(u)&\mapsto -\sum_{r=1}^{\fkv_i}\frac{\xi^+_{i,r}(w_{i,r})}{u-\tfrac{i-1}{2}-w_{i,r}}\dfo_{i,r}^{-1}-\sum_{r=1}^{\fkv_i}\frac{\xi^-_{i,r}(w_{i,r})}{u-\tfrac{i-1}{2}+w_{i,r}}\dfo_{i,r}+\frac{\xi_{i}^0}{u-\tfrac{i}{2}}.
\end{align*}
The functions $\varphi_i(u)$, $\xi_{i,r}^\pm(w_{i,r})$, and $\xi_{i}^0$ can be read off directly from formulas in Theorem \ref{thm:iGKLO:gl} and we do not need their explicit forms. 
Set
\beq\label{diamond}
\W_{i,r}^\diamond(u) =u^{\theta_i} \prod_{\stackrel{s=1}{s\ne r}}^{\fkv_i}(u^2-w_{i,s}^2).
\eeq

Clearly, the image of $[D_i(u),E_i(v)]$ under $\Phi_{\mu}^\la$ is given by
\begin{align*}
-&\sum_{r=1}^{\fkv_i}\frac{\W_{i,r}^\diamond(u-\tfrac{i-1}{2})\varphi_i(u)\xi^+_{i,r}(w_{i,r})}{v-\tfrac{i-1}{2}-w_{i,r}}\big((u-\tfrac{i-1}{2})^2-w_{i,r}^2\big)\dfo_{i,r}^{-1}\\
-&\sum_{r=1}^{\fkv_i}\frac{\W_{i,r}^\diamond(u-\tfrac{i-1}{2})\varphi_i(u)\xi^-_{i,r}(w_{i,r})}{v-\tfrac{i-1}{2}+w_{i,r}}\big((u-\tfrac{i-1}{2})^2-w_{i,r}^2\big)\dfo_{i,r}\\
+&\sum_{r=1}^{\fkv_i}\frac{\W_{i,r}^\diamond(u-\tfrac{i-1}{2})\varphi_i(u)\xi^+_{i,r}(w_{i,r})}{v-\tfrac{i-1}{2}-w_{i,r}}\big((u-\tfrac{i-1}{2})^2-(w_{i,r}-1)^2\big)\dfo_{i,r}^{-1}\\
+&\sum_{r=1}^{\fkv_i}\frac{\W_{i,r}^\diamond(u-\tfrac{i-1}{2})\varphi_i(u)\xi^-_{i,r}(w_{i,r})}{v-\tfrac{i-1}{2}+w_{i,r}}\big((u-\tfrac{i-1}{2})^2-(w_{i,r}+1)^2\big)\dfo_{i,r}\\
=\;\;&\sum_{r=1}^{\fkv_i}\frac{\W_{i,r}^\diamond(u-\tfrac{i-1}{2})\varphi_i(u)\xi^+_{i,r}(w_{i,r})}{v-\tfrac{i-1}{2}-w_{i,r}}\big(2w_{i,r}-1\big)\dfo_{i,r}^{-1}\\
&\hskip2cm -\sum_{r=1}^{\fkv_i}\frac{\W_{i,r}^\diamond(u-\tfrac{i-1}{2})\varphi_i(u)\xi^-_{i,r}(w_{i,r})}{v-\tfrac{i-1}{2}+w_{i,r}}\big(1+2w_{i,r}\big)\dfo_{i,r}.
\end{align*}
On the other hand, the image of $\frac{1}{u-v}D_i(u)(E_i(u)-E_i(v))$ under $\Phi_\mu^\la$ simplifies as
\begin{align*}
&\sum_{r=1}^{\fkv_i}\frac{\W_{i,r}^\diamond(u-\tfrac{i-1}{2})\varphi_i(u)\xi_{i,r}^+(w_{i,r})}{v-\tfrac{i-1}{2}-w_{i,r}}\big(u-\tfrac{i-1}{2}+w_{i,r}\big)\dfo_{i,r}^{-1}\\
+&\sum_{r=1}^{\fkv_i}\frac{\W_{i,r}^\diamond(u-\tfrac{i-1}{2})\varphi_i(u)\xi_{i,r}^-(w_{i,r})}{v-\tfrac{i-1}{2}+w_{i,r}}\big(u-\tfrac{i-1}{2}-w_{i,r}\big)\dfo_{i,r}-\frac{\W_i(u-\tfrac{i-1}{2})\varphi_i(u)\xi_i^0}{(u-\tfrac{i}{2})(v-\tfrac{i}{2})}
\end{align*}
while the image of $\frac{1}{u+v-i}(E_i(v)-E_i(-u+i))D_i(u)$ under $\Phi_\mu^\la$ simplifies as
\begin{align*}
-&\sum_{r=1}^{a_i}\frac{\W_{i,r}^\diamond(u-\tfrac{i-1}{2})\varphi_i(u)\xi_{i,r}^+(w_{i,r})}{v-\tfrac{i-1}{2}-w_{i,r}}\big(u-\tfrac{i-1}{2}-(w_{i,r}-1)\big)\dfo_{i,r}^{-1}\\
-&\sum_{r=1}^{a_i}\frac{\W_{i,r}^\diamond(u-\tfrac{i-1}{2})\varphi_i(u)\xi_{i,r}^-(w_{i,r})}{v-\tfrac{i-1}{2}+w_{i,r}}\big(u-\tfrac{i-1}{2}+(w_{i,r}+1)\big)\dfo_{i,r}+\frac{\W_i(u-\tfrac{i-1}{2})\varphi_i(u)\xi_i^0}{(u-\tfrac{i}{2})(v-\tfrac{i}{2})}.
\end{align*}
Summing up the above formulas, we see that the image of the right-hand side of \eqref{de} coincides with the image of the left-hand side, proving the relation \eqref{de}.

The proof of the relation \eqref{ed} is very similar, which eventually reduces to the following identities:
\begin{align*}
&\frac{1}{(u-\tfrac{i+1}2)^2-w_{i,r}^2}-\frac{1}{(u-\tfrac{i+1}2)^2-(w_{i,r}\pm 1)^2}\\
=&\frac{1}{\big((u-\tfrac{i+1}2)^2-w_{i,r}^2\big)\big(u-\tfrac{i-1}2\pm w_{i,r}\big)}-\frac{1}{\big((u-\tfrac{i+1}2)^2-(w_{i,r}-1)^2\big)\big(u-\tfrac{i+1}2\mp w_{i,r}\big)}.
\end{align*}

To simplify the notation in the next task, following \cite{GKLO} we set $w_{i,0}=\tfrac12$ and introduce
\begin{align}
\chi_{i,0}^+&= \sqrt{(-1)^{\fkw_i+\fkv_{i-1}+\fkv_{i+1}}}\frac{\theta_i Z_i(0)}{\W^\circ_{i}(w_{i,0})}\prod_{j\leftrightarrow i}W_j(0),\label{chi0def}\\
\chi_{i,r}^+&=-\frac{\varkappa(w_{i,r})^{-\vartheta_i}\W_{i-1}(w_{i,r}-\tfrac12)\overline W^-_{i+1}(w_{i,r}-\tfrac12)}{\W_{i,r}(w_{i,r})}\dfo_{i,r}^{-1},\notag\\
\chi_{i,r}^-&=-\frac{\varkappa(-w_{i,r})^{-\vartheta_i}W_{i+1}(w_{i,r}+\tfrac12)\Z_i(w_{i,r}+\tfrac12)}{\W_{i,r}(w_{i,r})}\dfo_{i,r},
\qquad \text{ for } 1\lle r\lle \fkv_i.
\notag
\end{align}
Note that $\chi_{i,0}^+=0$ when $\theta_i=0$. 
Then from \eqref{Bi:shiftEi} and \eqref{eq:Eiu} we derive that
\begin{align}  \label{Bi:chi}
\Phi_{\mu}^\la(B_i(u)) &=\sum_{r=0}^{\fkv_i}\frac{1}{u+\tfrac{1}{2}-w_{i,r}}\chi_{i,r}^++\sum_{r=1}^{\fkv_i}\frac{1}{u+\tfrac{1}{2}+w_{i,r}}\chi_{i,r}^-,
\\
\Phi_{\mu}^\la(B_i^{(1)}) &=\sum_{r=0}^{\fkv_i}\chi_{i,r}^++\sum_{r=1}^{\fkv_i}\chi_{i,r}^-.
\notag
\end{align}

Now we proceed to prove the relation \eqref{ee2}, and we shall set $j=i+1$. Note that $\theta_i\theta_j=0$ by \eqref{parity}. Without loss of generality, we assume that $\theta_j=0$. 

\begin{lem}\label{lem:chi}
Let $j=i+1$. We have
\begin{align*}
(\pm w_{i,r}\mp w_{i,s}-1)\chi_{i,r}^\pm\chi_{i,s}^\pm&=(\pm w_{i,r}\mp w_{i,s}+1)\chi_{i,s}^\pm\chi_{i,r}^\pm,\qquad \text{if }r\ne s,\\
(\pm w_{i,r}\pm w_{i,s}-1)\chi_{i,r}^\pm\chi_{i,s}^\mp&=(\pm w_{i,r}\pm w_{i,s}+1)\chi_{i,s}^\mp\chi_{i,r}^\pm,\qquad \text{if }r\ne s,\\
(\pm w_{i,r}\mp  w_{j,s}+\tfrac12)\chi_{i,r}^\pm\chi_{j,s}^\pm&=(\pm w_{i,r}\mp w_{j,s}-\tfrac12)\chi_{j,s}^\pm\chi_{i,r}^\pm,\\
(\pm w_{i,r}\pm  w_{j,s}+\tfrac12)\chi_{i,r}^\pm\chi_{j,s}^\mp&=(\pm w_{i,r}\pm w_{j,s}-\tfrac12)\chi_{j,s}^\mp\chi_{i,r}^\pm,
\end{align*}
where $r=0$ is allowed in $\chi_{i,r}^+$.
\end{lem}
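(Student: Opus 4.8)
The plan is to reduce each of the four displayed identities (each carrying two sign choices, and with a few extra subcases where $r=0$) to an elementary identity of rational functions. First I would note that, from the definitions in \eqref{chi0def}, every $\chi_{i,r}^{\pm}$ factors as
\[
\chi_{i,r}^{+}=f_{i,r}^{+}\,\dfo_{i,r}^{-1},\qquad \chi_{i,r}^{-}=f_{i,r}^{-}\,\dfo_{i,r},
\]
where $f_{i,r}^{\pm}$ is a rational function in $\bm z$ and the variables $w_{j,s}$, and where for $r=0$ (only $\chi_{i,0}^{+}$ occurs, under the convention $w_{i,0}=\tfrac12$) there is no attached difference operator. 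By \eqref{A:relations} the operators $\dfo_{j,s}$ for distinct pairs commute, and conjugation by $\dfo_{i,r}^{\pm1}$ sends $w_{i,r}\mapsto w_{i,r}\pm1$ and fixes every other variable. Hence both $\chi_{i,r}^{\epsilon}\chi_{j,s}^{\epsilon'}$ and $\chi_{j,s}^{\epsilon'}\chi_{i,r}^{\epsilon}$ equal a rational function times the \emph{same} trailing operator monomial $\dfo_{i,r}^{-\epsilon}\dfo_{j,s}^{-\epsilon'}$; cancelling that monomial turns each identity into a plain identity among rational functions in the $w$'s over $\bC[\bm z]$, and it suffices to compute the ratio $\chi_{i,r}^{\epsilon}\chi_{j,s}^{\epsilon'}\big/\chi_{j,s}^{\epsilon'}\chi_{i,r}^{\epsilon}$.

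The second step is to identify, case by case, the single factor of $f_{i,r}^{\epsilon}$ through which it depends on $w_{j,s}$, and symmetrically the factor (if any) of $f_{j,s}^{\epsilon'}$ depending on $w_{i,r}$: all the other factors (the powers of $\varkappa$, the polynomials $Z_i$, $\Z_i$, and the polynomials attached to the remaining neighbours of $i$ and of $j$) involve only the ``own'' variable and therefore cancel out of the ratio. When $i=j$ (necessarily $r\ne s$) this distinguished factor is $w_{i,r}^{2}-w_{i,s}^{2}$, sitting in the denominator $\W_{i,r}(w_{i,r})$, while for $r=0$ it is the factor $\tfrac14-w_{i,s}^{2}$ of $\W_i^{\circ}(\tfrac12)$. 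When $j=i+1$ it comes from the $(i{+}1)$-neighbour factor of $f_{i,r}^{\pm}$ (namely $\overline W_{i+1}^{-}(w_{i,r}-\tfrac12)$, resp.\ $W_{i+1}(w_{i,r}+\tfrac12)$) together with the factor of $\W_i(w_{i+1,s}-\tfrac12)$ inside $f_{i+1,s}^{+}$ (respectively of $\prod_{k\leftrightarrow i}W_k(0)$ inside $\chi_{i,0}^{+}$); here the standing hypothesis $\theta_j=0$ is what makes $\overline W_{i+1}^{-}=W_{i+1}^{-}$.

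With these factors isolated, each of the eight sign-cases (and the $r=0$ subcases) becomes a one-line check. For instance, for the first identity with the upper sign everything cancels except
\[
\frac{\chi_{i,r}^{+}\chi_{i,s}^{+}}{\chi_{i,s}^{+}\chi_{i,r}^{+}}
=\frac{w_{i,s}^{2}-w_{i,r}^{2}}{w_{i,s}^{2}-(w_{i,r}-1)^{2}}\cdot\frac{w_{i,r}^{2}-(w_{i,s}-1)^{2}}{w_{i,r}^{2}-w_{i,s}^{2}}
=\frac{w_{i,r}-w_{i,s}+1}{w_{i,r}-w_{i,s}-1},
\]
exactly the asserted ratio, and the third identity (upper sign) collapses, after cancelling one quadratic and one linear factor between the two orderings, to $\frac{w_{i,r}-w_{j,s}-\tfrac12}{w_{i,r}-w_{j,s}+\tfrac12}$; the remaining cases go through identically. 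The real difficulty here is purely organisational — tracking which factor of each $\chi$ carries the cross-dependence in each case, handling the $r=0$ subcases with care, and verifying the resulting short rational identities — with no conceptual obstacle; and this computation will be reused essentially verbatim in the proofs of Theorems \ref{thm:iGKLO:gl-alt} and \ref{thm:GKLOquasisplit}.
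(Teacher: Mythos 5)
Your proposal is correct and is essentially the paper's own argument, which is recorded there only as ``follows from a direct computation'': one moves the difference operators to the right using \eqref{A:relations}, cancels the common operator monomial, and checks that the ratio of the two rational parts (governed solely by the cross-dependence factors $w_{i,r}^2-w_{i,s}^2$ in $\W_{i,r}(w_{i,r})$, the neighbour factors $\overline W^-_{i+1}(w_{i,r}-\tfrac12)$, $W_{i+1}(w_{i,r}+\tfrac12)$, $\W_i(w_{j,s}-\tfrac12)$, and for $r=0$ the factors $\tfrac14-w_{i,s}^2$ and $\prod_{k\leftrightarrow i}W_k(0)$) equals the ratio of the stated linear prefactors. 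Your sample computations, including the $r=0$ subcase with the convention $w_{i,0}=\tfrac12$, check out, and the remark about $\theta_j=0$ is harmless since the $u^{\theta_j}$ factor carries no cross-dependence anyway.
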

\begin{proof}
Follows from a direct computation.
\end{proof}

\begin{rem}
Alternatively, one can also set $w_{i,0}=-\tfrac12$ and introduce
\be
\chi_{i,0}^-= \sqrt{(-1)^{\fkw_i+\fkv_{i-1}+\fkv_{i+1}}}\frac{\theta_i Z_i(0)}{\W^\circ_{i}(w_{i,0})}\prod_{j\leftrightarrow i}W_j(0).
\ee
Then the above lemma also holds for $\chi_{i,r}^-$ with $r=0$.
\end{rem}

The relation \eqref{ee2} can be equivalently written as
\beq\label{eepf3}
\begin{split}
&(u-v+\tfrac12)B_i(u)B_j(v)+B_i(u)B_j^{(1)}-B_i^{(1)}B_j(v)\\
=&(u-v-\tfrac12)B_j(v)B_i(u)+B_j^{(1)}B_i(u)-B_j(v)B_i^{(1)}.
\end{split}
\eeq
The image of the left-hand side of \eqref{eepf3} under $\Phi_\mu^\la$ simplifies as
\begin{align*}
\sum_{r=0}^{\fkv_i}\sum_{s=1}^{\fkv_j}\bigg(&\frac{w_{i,r}-w_{j,s}+\tfrac12}{(u+\tfrac12-w_{i,r})(v+\tfrac12-w_{j,s})}\chi_{i,r}^+\chi_{j,s}^+ +  \frac{w_{i,r}+w_{j,s}+\tfrac12}{(u+\tfrac12-w_{i,r})(v+\tfrac12+w_{j,s})}\chi_{i,r}^+\chi_{j,s}^-\bigg)\\
+\sum_{r=1}^{\fkv_i}\sum_{s=1}^{\fkv_j}\bigg(&\frac{-w_{i,r}-w_{j,s}+\tfrac12}{(u+\tfrac12+w_{i,r})(v+\tfrac12-w_{j,s})}\chi_{i,r}^-\chi_{j,s}^+ +  \frac{-w_{i,r}+w_{j,s}+\tfrac12}{(u+\tfrac12+w_{i,r})(v+\tfrac12+w_{j,s})}\chi_{i,r}^-\chi_{j,s}^-\bigg)
\end{align*}
while the image of the left-hand side of \eqref{eepf3} under $\Phi_\mu^\la$ is
\begin{align*}
\sum_{r=0}^{\fkv_i}\sum_{s=1}^{\fkv_j}\bigg(&\frac{w_{i,r}-w_{j,s}-\tfrac12}{(u+\tfrac12-w_{i,r})(v+\tfrac12-w_{j,s})}\chi_{j,s}^+\chi_{i,r}^+ +  \frac{w_{i,r}+w_{j,s}-\tfrac12}{(u+\tfrac12-w_{i,r})(v+\tfrac12+w_{j,s})}\chi_{j,s}^-\chi_{i,r}^+\bigg)\\
+\sum_{r=1}^{\fkv_i}\sum_{s=1}^{\fkv_j}\bigg(&\frac{-w_{i,r}-w_{j,s}-\tfrac12}{(u+\tfrac12+w_{i,r})(v+\tfrac12-w_{j,s})}\chi_{j,s}^+\chi_{i,r}^- +  \frac{-w_{i,r}+w_{j,s}-\tfrac12}{(u+\tfrac12+w_{i,r})(v+\tfrac12+w_{j,s})}\chi_{j,s}^-\chi_{i,r}^-\bigg).
\end{align*}
Clearly, those two images coincide by Lemma \ref{lem:chi}, completing the verification of the relation \eqref{ee2}.

\subsection{The relation \eqref{ee}}\label{ssec:eepf}
From now on, we set $\tl u_i=u-\tfrac{i-1}2$ and $\tl v_i=v-\tfrac{i-1}2$. Then we have
\begin{align*}
E_i(u)\mapsto \sum_{r=0}^{\fkv_i}\frac{1}{\tl u_i-w_{i,r}}\chi_{i,r}^++\sum_{s=1}^{\fkv_i}\frac{1}{\tl u_i+w_{i,s}}\chi_{i,s}^-.
\end{align*}
For simplicity, we set
\beq\label{eq:Omega}
\Omega_i(u):=\frac{\bm\varkappa(u-\tfrac12)^{\vartheta_i}\W_{i-1}(u-\tfrac{1}{2})\W_{i+1}(u-\tfrac{1}{2})\Z_i(u-\tfrac{1}2)}{\W_i(u)\W_i(u-1)}.
\eeq
Here $\Omega_i(u)=\Omega_i(-u+1)$. Moreover, if $\theta_i=\vartheta_i$, then
\beq\label{eq:Omega2}
\Omega_i(u)=\frac{\W_{i-1}(u-\tfrac{1}{2})\W_{i+1}(u-\tfrac{1}{2})\Z_i(u-\tfrac{1}2)}{(u-\hf)^{2\theta_i}\W_i^\circ(u)\W_i^\circ(u-1)}.
\eeq
To verify the relation \eqref{ee}, it is equivalent to check that 
\beq\label{eerel}
\begin{split}
&\bigg[\sum_{r=0}^{\fkv_i}\frac{1}{\tl u_i-w_{i,r}}\chi_{i,r}^++\sum_{r=1}^{\fkv_i}\frac{1}{\tl u_i+w_{i,r}}\chi_{i,r}^-,\sum_{s=0}^{\fkv_i}\frac{1}{\tl v_i-w_{i,s}}\chi_{i,s}^++\sum_{s=1}^{\fkv_i}\frac{1}{\tl v_i+w_{i,s}}\chi_{i,s}^-\bigg]\\
=&-(\tl u_i-\tl v_i)\bigg(\sum_{r=0}^{\fkv_i}\frac{1}{(\tl u_i-w_{i,r})(\tl v_i-w_{i,r})}\chi_{i,r}^+ + \sum_{s=1}^{\fkv_i}\frac{1}{(\tl u_i+w_{i,s})(\tl v_i+w_{i,s})}\chi_{i,s}^-\bigg)^2\\
&\hskip6.4cm -\frac{1}{\tl u_i+\tl v_i-1}\Big(\big(\Omega_i(\tl u_i)\big)^\star-\big(\Omega_i(\tl v_i)\big)^\star\Big).
\end{split}
\eeq
We shall move all difference operators $\chi_{i,r}^{\pm}$ to the right and then compare the terms not involving difference operators and terms containing $\chi_{i,r}^{\pm}\chi_{i,s}^{\pm}$ and  $\chi_{i,r}^{\pm}\chi_{i,s}^{\mp}$.

\subsubsection{Terms involving $\chi_{i,r}^{\pm}\chi_{i,s}^{\pm}$ and  $\chi_{i,r}^{\pm}\chi_{i,s}^{\mp}$ $(r\ne s)$}
Fix $0\lle r,s\lle \fkv_i$ such that $r\ne s$. We collect terms containing $\chi_{i,r}^{+}\chi_{i,s}^{+}$ and $\chi_{i,r}^{+}\chi_{i,s}^{+}$. The terms containing $\chi_{i,r}^{+}\chi_{i,s}^{+}$ and $\chi_{i,s}^{+}\chi_{i,r}^{+}$ from the left-hand side of \eqref{eerel} are given by
\beq\label{ddl}
\begin{split}
&\frac{1}{(\tl u_i-w_{i,r})(\tl v_i-w_{i,s})}\chi_{i,r}^+\chi_{i,s}^+-\frac{1}{(\tl u_i-w_{i,r})(\tl v_i-w_{i,s})}\chi_{i,s}^+\chi_{i,r}^+\\
&\qquad \qquad +\frac{1}{(\tl u_i-w_{i,s})(\tl v_i-w_{i,r})}\chi_{i,s}^+\chi_{i,r}^+-\frac{1}{(\tl u_i-w_{i,s})(\tl v_i-w_{i,r})}\chi_{i,r}^+\chi_{i,s}^+\\
&= \frac{-(\tl u_i-\tl v_i)\big((w_{i,r}-w_{i,s})\chi_{i,r}^+\chi_{i,s}^++(w_{i,s}-w_{i,r})\chi_{i,s}^+\chi_{i,r}^+\big)}{(\tl u_i-w_{i,r})(\tl u_i-w_{i,s})(\tl v_i-w_{i,r})(\tl v_i-w_{i,s})}.
\end{split}
\eeq
The terms containing $\chi_{i,r}^{+}\chi_{i,s}^{+}$ and $\chi_{i,s}^{+}\chi_{i,r}^{+}$ from the right-hand side of \eqref{eerel} are given by
\beq\label{ddr}
\begin{split}
\frac{-(\tl u_i-\tl v_i)\big( \chi_{i,r}^+\chi_{i,s}^++ \chi_{i,s}^+\chi_{i,r}^+\big)}{(\tl u_i-w_{i,r})(\tl u_i-w_{i,s})(\tl v_i-w_{i,r})(\tl v_i-w_{i,s})}.
\end{split}
\eeq
To prove that \eqref{ddl}\,=\,\eqref{ddr}, it is equivalent  to prove that
\begin{align}
\frac{-(\tl u_i-\tl v_i)\big((w_{i,r}-w_{i,s}-1)\chi_{i,r}^+\chi_{i,s}^++(w_{i,s}-w_{i,r}-1)\chi_{i,s}^+\chi_{i,r}^+\big)}{(\tl u_i-w_{i,r})(\tl u_i-w_{i,s})(\tl v_i-w_{i,r})(\tl v_i-w_{i,s})}=0,\label{ddpf1}
\end{align}
which follows immediately from Lemma \ref{lem:chi}.

To prove that the terms containing $\chi_{i,r}^{\pm}\chi_{i,s}^{\pm}$ and $\chi_{i,s}^{\pm}\chi_{i,r}^{\pm}$ (resp. $\chi_{i,r}^{\pm}\chi_{i,s}^{\mp}$ and $\chi_{i,s}^{\mp}\chi_{i,r}^{\pm}$) on both sides match, the computation is almost identical. Indeed, it essentially reduces to prove the identity \eqref{ddpf1} with $w_{i,r}$ and $w_{i,s}$ replaced by $\pm w_{i,r}$ and $\pm w_{i,s}$ (resp. $\pm w_{i,r}$ and $\mp w_{i,s}$), respectively. Note that we allow $r=0$ in $\chi_{i,r}^+$.

\subsubsection{Terms involving $\chi_{i,r}^{\pm}\chi_{i,r}^{\pm}$ $(r\gge 1)$ }
The terms containing $\chi_{i,r}^{+}\chi_{i,r}^{+}$ from the left-hand side of \eqref{eerel} are given by
\beq\label{ddl2}
\begin{split}
&\frac{1}{\tl u_i-w_{i,r}}\chi_{i,r}^+\frac{1}{\tl v_i-w_{i,r}}\chi_{i,r}^+-\frac{1}{\tl v_i-w_{i,r}}\chi_{i,r}^+\frac{1}{\tl u_i-w_{i,r}}\chi_{i,r}^+\\
=&\Big(\frac{1}{(\tl u_i-w_{i,r})(\tl v_i-w_{i,r}+1)}-\frac{1}{(\tl v_i-w_{i,r})(\tl u_i-w_{i,r}+1)}\Big)\chi_{i,r}^+\chi_{i,r}^+\\
=&\frac{-(\tl u_i-\tl v_i)}{(\tl u_i-w_{i,r})(\tl v_i-w_{i,r})(\tl u_i-w_{i,r}+1)(\tl v_i-w_{i,r}+1)}\chi_{i,r}^+\chi_{i,r}^+
\end{split}
\eeq
while the terms containing $\chi_{i,r}^{+}\chi_{i,r}^{+}$ from the right-hand side of \eqref{eerel} are given by 
\beq\label{ddr2}
-(\tl u_i-\tl v_i)\frac{1}{(\tl u_i-w_{i,r})(\tl v_i-w_{i,r})}\chi_{i,r}^+\frac{1}{(\tl u_i-w_{i,r})(\tl v_i-w_{i,r})}\chi_{i,r}^+.
\eeq
It is clear that \eqref{ddl2}\,=\,\eqref{ddr2} since $\chi_{i,r}^+$ contains $\dfo_{i,r}^{-1}$. 

The case for terms of $\chi_{i,r}^{-}\chi_{i,r}^{-}$ is reduced to a similar computation with $w_{i,r}$ replaced by $-w_{i,r}$.

\subsubsection{The constant terms}
We are left with comparing the terms not involving difference operators. We call such terms \emph{constant terms}. The constant terms from the left-hand side of \eqref{eerel} are given by
\begin{align*}
&\sum_{r=1}^{\fkv_i}\frac{1}{(\tl u_i-w_{i,r})(\tl v_i+w_{i,r}-1)}\chi_{i,r}^+\chi_{i,r}^- + \sum_{r=1}^{\fkv_i}\frac{1}{(\tl u_i+w_{i,r})(\tl v_i-w_{i,r}-1)}\chi_{i,r}^-\chi_{i,r}^+ \\
-&\sum_{r=1}^{\fkv_i}\frac{1}{(\tl v_i-w_{i,r})(\tl u_i+w_{i,r}-1)}\chi_{i,r}^+\chi_{i,r}^- - \sum_{r=1}^{\fkv_i}\frac{1}{(\tl u_i-w_{i,r}-1)(\tl v_i+w_{i,r})}\chi_{i,r}^-\chi_{i,r}^+.
\end{align*}
The constant terms from the right-hand side of \eqref{eerel} are
\begin{align*}
-(\tl u_i-\tl v_i)\bigg(&
\sum_{r=1}^{\fkv_i}\frac{1}{(\tl u_i-w_{i,r})(\tl v_i-w_{i,r})(\tl u_i+w_{i,r}-1)(\tl v_i+w_{i,r}-1)}\chi_{i,r}^+\chi_{i,r}^-&\\
&+\sum_{r=1}^{\fkv_i}\frac{1}{(\tl u_i+w_{i,r})(\tl v_i+w_{i,r})(\tl u_i-w_{i,r}-1)(\tl v_i-w_{i,r}-1)}\chi_{i,r}^-\chi_{i,r}^+
&\\
&+\frac{1}{(\tl u_i-\tfrac12)^2(\tl v_i-\tfrac12)^2}\chi_{i,0}^+\chi_{i,0}^+ \bigg)-\frac{1}{\tl u_i+\tl v_i-1}\Big(\big(\Omega_i(\tl u_i)\big)^\star-\big(\Omega_i(\tl v_i)\big)^\star\Big),
\end{align*}
which can be further simplified as
\begin{align*}
&\sum_{r=1}^{\fkv_i}\frac{1}{\brown{(2w_{i,r}-1)}}\Big(\frac{1}{(\tl u_i-w_{i,r})(\tl v_i+w_{i,r}-1)}\chi_{i,r}^+\chi_{i,r}^- -\frac{1}{(\tl u_i+w_{i,r}-1)(\tl v_i-w_{i,r})}\chi_{i,r}^+\chi_{i,r}^- \Big)\\
+&\sum_{r=1}^{\fkv_i}\frac{1}{\brown{(2w_{i,r}+1)}}\Big(\frac{1}{(\tl u_i-w_{i,r}-1)(\tl v_i+w_{i,r})}\chi_{i,r}^-\chi_{i,r}^+ -\frac{1}{(\tl u_i+w_{i,r})(\tl v_i-w_{i,r}-1)}\chi_{i,r}^-\chi_{i,r}^+ \Big)\\
&\hskip1.45cm-\frac{1}{\tl u_i+\tl v_i-1}\Big(\big(\Omega_i(\tl u_i)\big)^\star-\big(\Omega_i(\tl v_i)\big)^\star-\frac{1}{(\tl u_i-\tfrac12)^2}\chi_{i,0}^+\chi_{i,0}^++\frac{1}{(\tl v_i-\tfrac12)^2}\chi_{i,0}^+\chi_{i,0}^+\Big).
\end{align*}
Hence, we need to prove that
\begin{align*}
&-\frac{1}{\tl u_i+\tl v_i-1}\Big(\big(\Omega_i(\tl u_i)\big)^\star-\big(\Omega_i(\tl v_i)\big)^\star-\frac{1}{(\tl u_i-\tfrac12)^2}\chi_{i,0}^+\chi_{i,0}^++\frac{1}{(\tl v_i-\tfrac12)^2}\chi_{i,0}^+\chi_{i,0}^+\Big)\\
=&\sum_{r=1}^{\fkv_i}\frac{\brown{2(w_{i,r}-1)}}{\brown{(2w_{i,r}-1)}}\Big(\frac{1}{(\tl u_i-w_{i,r})(\tl v_i+w_{i,r}-1)}\chi_{i,r}^+\chi_{i,r}^- -\frac{1}{(\tl u_i+w_{i,r}-1)(\tl v_i-w_{i,r})}\chi_{i,r}^+\chi_{i,r}^- \Big)\\
+&\sum_{r=1}^{\fkv_i}\frac{\brown{2(w_{i,r}+1)}}{\brown{(2w_{i,r}+1)}}\Big(\frac{1}{(\tl u_i+w_{i,r})(\tl v_i-w_{i,r}-1)}\chi_{i,r}^-\chi_{i,r}^+ - \frac{1}{(\tl u_i-w_{i,r}-1)(\tl v_i+w_{i,r})}\chi_{i,r}^-\chi_{i,r}^+\Big).
\end{align*}
Set
\[
\W^\circ_{i,r}=(u+w_{i,r})\prod_{1\lle s\lle \fkv_i}^{s\ne r}(u^2-w_{i,s}^2).
\]
Note that, for $r\gge 1$,
\[
\dfo_{i,r}^{\pm 1}\W_{i,r}^\circ(w_{i,r})=\W_{i,r}^\circ(w_{i,r}\pm 1)\brown{\frac{2(w_{i,r}\pm 1)}{2w_{i,r}\pm 1}}\dfo_{i,r}^{\pm 1},
\]
and thus we have 
\begin{align}
\chi_{i,r}^+\chi_{i,r}^-&=-\frac{\brown{(2w_{i,r}-1)}}{\brown{2(w_{i,r}-1)}}\mathrm{Res}_{u=w_{i,r}}\Omega_i(u)=\frac{\brown{(2w_{i,r}-1)}}{\brown{2(w_{i,r}-1)}}\mathrm{Res}_{u=-w_{i,r}+1}\Omega_i(u),\label{chi+-}\\
\chi_{i,r}^-\chi_{i,r}^+&=-\frac{\brown{(2w_{i,r}+1)}}{\brown{2(w_{i,r}+1)}}\mathrm{Res}_{u=-w_{i,r}}\Omega_i(u)=\frac{ \brown{(2w_{i,r}+1)}}{\brown{2(w_{i,r}+1)}}\mathrm{Res}_{u=w_{i,r}+1}\Omega_i(u),\label{chi-+}
\end{align}
where $\mathrm{Res}_{u=a}f(u)$ denotes the residue of $f(u)$ at $u=a$.

Note that by \eqref{chi0def}, we have
\beq\label{chisqu}
\chi_{i,0}^+\chi_{i,0}^+=\frac{\theta_i\W_{i-1}(0)\W_{i+1}(0)\Z_i(0)}{\W_i^\circ(\tfrac12)\W_i^\circ(\tfrac12)}.
\eeq
Hence, to show that the constant terms from both sides of \eqref{eerel} match, it suffices to prove that
\begin{align*}
\big(\Omega_i(\tl v_i)\big)^\star-\big(\Omega_i(\tl u_i)\big)^\star
\end{align*}
is equal to
\begin{align*}
&\theta_i\bigg(\frac{1}{(\tl v_i-\tfrac12)^2}-\frac{1}{(\tl u_i-\tfrac12)^2}\bigg)\frac{\W_{i-1}(0)\W_{i+1}(0)\Z_i(0)}{\W_i^\circ(\tfrac12)\W_i^\circ(-\tfrac12)}\\
+\,&\sum_{r=1}^{\fkv_i}\mathrm{Res}_{u=-w_{i,r}+1}\Omega_i(u)\bigg(\frac{1}{\tl u_i-w_{i,r}}
+\frac{1}{\tl v_i+w_{i,r}-1}-\frac{1}{\tl u_i+w_{i,r}-1}-\frac{1}{\tl v_i-w_{i,r}}\bigg)\\
+\,&\sum_{r=1}^{\fkv_i}\mathrm{Res}_{u=w_{i,r}+1}\Omega_i(u)\bigg(\frac{1}{\tl u_i+w_{i,r}}
+\frac{1}{\tl v_i-w_{i,r}-1}-\frac{1}{\tl u_i-w_{i,r}-1}-\frac{1}{\tl v_i+w_{i,r}}\bigg);
\end{align*}
this follows from the following standard result, cf. \cite[proof of Theorem 4.5]{KWWY14} and \cite[Appendix~ B.5.2]{BFN19}.

\begin{lem}\label{lem:std}
\begin{enumerate}
    \item For any rational function $\gamma(u)$ with simple poles $\{a_k\}\subset \bC$ and a possible pole of higher order at $u=\infty$, we have
\[
\gamma(u)^\star=\sum_{k}\frac{1}{u-a_k}\mathrm{Res}_{u=a_k}\gamma(u).
\]
\item Let $\theta_i\in\{0,1\}$ be as before. For any even rational function $\gamma(u)$ with simple poles $\{a_k\}\subset \bC^\times$ and a possible pole of higher order at $u=\infty$, we have
\[
\big(u^{-2\theta_i}\gamma(u)\big)^\star=\frac{1}{u^2}\theta_i\gamma(0)+\sum_{k}\frac{1}{u-a_k}\mathrm{Res}_{u=a_k}u^{-2\theta_i}\gamma(u).
\]
\end{enumerate}
\end{lem}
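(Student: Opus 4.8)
The plan is to derive both statements from the partial fraction decomposition of a rational function and then read off which summands survive the operation $(-)^\star$, which by definition discards exactly the polynomial part of the Laurent expansion at $u=\infty$. For part (1), since the only finite poles of $\gamma(u)$ are the simple poles $\{a_k\}$, there is a unique polynomial $P(u)$ with
\[
\gamma(u)=P(u)+\sum_k\frac{\mathrm{Res}_{u=a_k}\gamma(u)}{u-a_k}.
\]
Expanding each $\tfrac{1}{u-a_k}=\sum_{r\gge 1}a_k^{r-1}u^{-r}$ as a Laurent series at $u=\infty$ shows $\big(\tfrac{1}{u-a_k}\big)^\star=\tfrac{1}{u-a_k}$, whereas $P(u)$ contains only non-negative powers of $u$, so $(P(u))^\star=0$. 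Applying $(-)^\star$ termwise gives the claim; the possible higher-order pole of $\gamma$ at $u=\infty$ is accounted for by $P(u)$ and hence contributes nothing.

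For part (2), the case $\theta_i=0$ is literally part (1), so I would assume $\theta_i=1$ and compute $(u^{-2}\gamma(u))^\star$. The one new feature is that $u^{-2}\gamma(u)$ has a pole at $u=0$ which is not simple, so part (1) does not apply verbatim. But $\gamma$ is even and regular at $0$, hence $\gamma(u)=\gamma(0)+O(u^2)$ near $0$, so $u^{-2}\gamma(u)=\gamma(0)u^{-2}+O(1)$ there: it has a double pole but no $u^{-1}$ term. I would therefore split
\[
u^{-2}\gamma(u)=\frac{\gamma(0)}{u^2}+\tilde\gamma(u),\qquad \tilde\gamma(u):=u^{-2}\big(\gamma(u)-\gamma(0)\big),
\]
noting that $\tilde\gamma(u)$ is rational, regular at $0$, has the same simple poles $\{a_k\}\subset\bC^\times$ as $\gamma$, and at worst a pole at $u=\infty$, so part (1) applies to $\tilde\gamma$. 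Since $\mathrm{Res}_{u=a_k}\tilde\gamma(u)=a_k^{-2}\,\mathrm{Res}_{u=a_k}\gamma(u)=\mathrm{Res}_{u=a_k}(u^{-2}\gamma(u))$ and $\big(\tfrac{\gamma(0)}{u^2}\big)^\star=\tfrac{\gamma(0)}{u^2}$, adding the two pieces produces exactly the asserted formula, with the extra term $\tfrac{1}{u^2}\theta_i\gamma(0)$ arising from the double pole at the origin.

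I do not anticipate a genuine obstacle here; the only point demanding care is the bookkeeping around $(-)^\star$: confirming in both parts that it annihilates the polynomial-at-infinity contribution, and recognizing in part (2) that the evenness hypothesis on $\gamma$ is precisely what removes the would-be simple-pole term at $u=0$, so that no summand beyond $\tfrac{1}{u^2}\theta_i\gamma(0)$ appears.
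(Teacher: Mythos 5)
Your proof is correct. Note that the paper itself gives no proof of this lemma: it is invoked as a standard fact with references to the proof of Theorem 4.5 in \cite{KWWY14} and Appendix B.5.2 of \cite{BFN19}, and your partial-fraction argument — decompose $\gamma(u)=P(u)+\sum_k \mathrm{Res}_{u=a_k}\gamma(u)\,(u-a_k)^{-1}$, observe that $(-)^\star$ kills the polynomial part and fixes each $(u-a_k)^{-1}$, and in part (2) use evenness of $\gamma$ to see that $u^{-2}\gamma(u)$ has no $u^{-1}$ term at the origin, isolating the extra $\gamma(0)u^{-2}$ contribution — is exactly the standard argument intended there, with all the relevant hypotheses (simple poles in $\bC^\times$, evenness, behaviour at $\infty$) used in the right places.
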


Indeed, we have 2 cases. If $\theta_i=\vartheta_i$, then the desired equality follows from Lemma \ref{lem:std} and \eqref{eq:Omega2}. If $\theta_i\ne\vartheta_i$, then it only happens if $\theta_i=0$ and $\vartheta_i=1$. Thus $\max\{\theta_{i-1},\theta_{i+1}\}=1$ and at least one of $\W_{i-1}(u)$ and $\W_{i+1}(u)$ are odd. It follows that $\Omega_i(u)$ has at most a simple pole at $u=\hf$. However, $\Omega_i(u+\hf)$ is even and hence $\Omega_i(u)$ must be regular at $u=\hf$. 

\subsection{The Serre relation \eqref{serregl}} \label{sec:serre}

We prove it for the case $j=i+1$ as the other case $j=i-1$ is similar. In the remainder of this section, sometimes we write $\frac{p\chi_{i,r}^+}{q}$ for $\frac{p}{q}\chi_{i,r}^+$ to shorten formulas, where $p,q$ are polynomials in $u$ and $w_{j,s}$, and similarly for others. We understand these ratio as follows: the terms $\chi_{i,r}^\pm$ involving difference operators are always to the right of the scalar rational functions. It is also convenient to set $\chi_{i,0}^-=\chi_{j,0}^-=0$.

We start with the left-hand side of \eqref{serregl}. We have
\begin{align}
&\big[\Phi_{\mu}^\la(B_i^{(1)}),[\Phi_{\mu}^\la(B_i(u)),\Phi_{\mu}^\la(B_j^{(1)})]\big]+\big[\Phi_{\mu}^\la(B_i(u)),[\Phi_{\mu}^\la(B_i^{(1)}),\Phi_{\mu}^\la(B_j^{(1)})]\big]\label{serpf1}\\
=&\bigg[\sum_{r_1=0}^{\fkv_i}\big(\chi_{i,r_1}^++\chi_{i,r_1}^-\big),\Big[\sum_{r_2=0}^{\fkv_i}\Big(\frac{\chi_{i,r_2}^+}{u+\tfrac{1}{2}-w_{i,r_2}}+\frac{\chi_{i,r_2}^-}{u+\tfrac{1}{2}+w_{i,r_2}}\Big),\sum_{s=0}^{\fkv_j}\big(\chi_{j,s}^++\chi_{j,s}^-\big)\Big]\bigg]\notag\\
+&\bigg[\sum_{r_1=0}^{\fkv_i}\Big(\frac{\chi_{i,r_1}^+}{u+\tfrac{1}{2}-w_{i,r_1}}+\frac{\chi_{i,r_2}^-}{u+\tfrac{1}{2}+w_{i,r_1}}\Big),\Big[\sum_{r_2=0}^{\fkv_i}\big(\chi_{i,r_2}^++\chi_{i,r_2}^-\big),\sum_{s=0}^{\fkv_j}\big(\chi_{j,s}^++\chi_{j,s}^-\big)\Big]\bigg].\notag
\end{align}
We first consider the case $r_1\ne r_2$. By Lemma \ref{lem:chi}, we have
\begin{align*}
&\Big[\frac{1}{u+\tfrac{1}{2}-w_{i,r_1}}\chi_{i,r_1}^+,\big[\chi_{i,r_2}^+ , \chi_{j,s}^+ \big]\Big]+\Big[\chi_{i,r_2}^+,\Big[ \frac{1}{u+\tfrac{1}{2}-w_{i,r_1}}\chi_{i,r_1}^+, \chi_{j,s}^+ \Big]\Big]\\
=&\Big[\frac{1}{u+\tfrac{1}{2}-w_{i,r_1}}\chi_{i,r_1}^+,\frac{-1}{w_{i,r_2}-w_{j,s}-\tfrac12}\chi_{i,r_2}^+  \chi_{j,s}^+\Big]\\
&+\Big[\chi_{i,r_2}^+,\frac{-1}{(u+\tfrac12-w_{i,r_1})(w_{i,r_1}-w_{j,s}-\tfrac12)}\chi_{i,r_1}^+  \chi_{j,s}^+\Big]\\
=&\frac{-1}{(u+\tfrac{1}{2}-w_{i,r_1})(w_{i,r_2}-w_{j,s}-\tfrac12)}\Big(\chi_{i,r_1}^+\chi_{i,r_2}^+\chi_{j,s}^+-\frac{w_{i,r_1}-w_{j,s}+\tfrac12}{w_{i,r_1}-w_{j,s}-\tfrac12}\chi_{i,r_2}^+\chi_{i,r_1}^+\chi_{j,s}^+\Big)\\
&+\frac{-1}{(u+\tfrac{1}{2}-w_{i,r_1})(w_{i,r_1}-w_{j,s}-\tfrac12)}\Big(\chi_{i,r_2}^+\chi_{i,r_1}^+\chi_{j,s}^+-\frac{w_{i,r_2}-w_{j,s}+\tfrac12}{w_{i,r_2}-w_{j,s}-\tfrac12}\chi_{i,r_1}^+\chi_{i,r_2}^+\chi_{j,s}^+\Big)\\
=&\frac{-1}{(u+\tfrac{1}{2}-w_{i,r_1})(w_{i,r_1}-w_{j,s}-\tfrac12)(w_{i,r_2}-w_{j,s}-\tfrac12)}\Big((w_{i,r_1}-w_{i,r_2}-1)\chi_{i,r_1}^+\chi_{i,r_2}^+\chi_{j,s}^+\\
&\hskip8cm +(w_{i,r_2}-w_{i,r_1}-1)\chi_{i,r_2}^+\chi_{i,r_1}^+\chi_{j,s}^+\Big)=0.
\end{align*}
The same type of computation implies that
\[
\Big[\frac{1}{u+\tfrac{1}{2}-w_{i,r_1}}\chi_{i,r_1}^{*_1},\big[\chi_{i,r_2}^{*_2} , \chi_{j,s}^{*_3} \big]\Big]+\Big[\chi_{i,r_2}^{*_2},\Big[ \frac{1}{u+\tfrac{1}{2}-w_{i,r_1}}\chi_{i,r_1}^{*_1}, \chi_{j,s}^{*_3} \Big]\Big]=0
\]
for $*_{i}\in\{\pm\}$, $i=1,2,3$, provided $r_1\ne r_2$. Specifically, one only needs to change $w_{i,r_1}$, $w_{i,r_2}$, $w_{j,s}$ to $*_1w_{i,r_1}$, $*_2w_{i,r_2}$, $*_3w_{j,s}$, respectively, in the above calculation.

Similarly, one finds that
\[
\Big[\frac{1}{u+\tfrac{1}{2}-w_{i,r}}\chi_{i,r}^{*},\big[\chi_{i,r}^{*} , \chi_{j,s}^{\pm} \big]\Big]+\Big[\chi_{i,r}^{*},\Big[ \frac{1}{u+\tfrac{1}{2}-w_{i,r}}\chi_{i,r}^{*}, \chi_{j,s}^{\pm} \Big]\Big]=0
\]
for $*\in\{\pm\}$ for $r\gge 1$. Thus the right-hand side of \eqref{serpf1} is equal to
\beq\label{inv0}
\Big[\frac{1}{u+\tfrac{1}{2}- w_{i,0}}\chi_{i,0}^{+},\big[\chi_{i,0}^{+} , \chi_{j,s}^{*} \big]\Big]+\Big[\chi_{i,0}^{+},\Big[ \frac{1}{u+\tfrac{1}{2}-w_{i,0}}\chi_{i,0}^{+}, \chi_{j,s}^{*} \Big]\Big]
\eeq
plus
\beq\label{invnot0}
\begin{split}
\sum_{*\in\{\pm\}}\sum_{s=0}^{\fkv_j}\sum_{r=1}^{\fkv_i}\bigg(&\Big[\frac{1}{u+\tfrac{1}{2}- w_{i,r}}\chi_{i,r}^{+},\big[\chi_{i,r}^{-} , \chi_{j,s}^{*} \big]\Big]+\Big[\chi_{i,r}^{-},\Big[ \frac{1}{u+\tfrac{1}{2}-w_{i,r}}\chi_{i,r}^{+}, \chi_{j,s}^{*} \Big]\Big]\\
+&\Big[\frac{1}{u+\tfrac{1}{2}+ w_{i,r}}\chi_{i,r}^{-},\big[\chi_{i,r}^{+}, \chi_{j,s}^{*} \big]\Big]+\Big[\chi_{i,r}^{+},\Big[ \frac{1}{u+\tfrac{1}{2}+w_{i,r}}\chi_{i,r}^{-}, \chi_{j,s}^{*} \Big]\Big]\bigg). 
\end{split}
\eeq
We shall prove the Serre relation \eqref{serregl} by compare the terms containing the same $\chi_{j,s}^+$. The case for $\chi_{j,s}^-$ is similar.

Recall our assumption \eqref{parity}, $\chi_{i,0}^+$ and $\chi_{j,0}^+$ cannot be both nonzero. Hence if $s\ne0$, it follows from Lemma \ref{lem:chi} and \eqref{chisqu} that the part \eqref{inv0} can be transformed to
\beq\label{inv0new}
\begin{split}
\frac{2\theta_i\W_{i-1}(0)\W_{j}(0)\Z_i(0)}{(u+\tfrac{1}{2}- w_{i,0})w_{j,s}^2\W_i^\circ(\tfrac12)\W_i^\circ(\tfrac12)}\chi_{j,s}^+.
\end{split}
\eeq

Again using Lemma \ref{lem:chi}, the part \eqref{invnot0} can be rewritten as
\begin{align*}
&\sum_{r=1}^{\fkv_i}\bigg(\Big[\frac{1}{u+\tfrac{1}{2}- w_{i,r}}\chi_{i,r}^{+},\big[\chi_{i,r}^{-} , \chi_{j,s}^{+} \big]\Big]+\Big[\chi_{i,r}^{-},\Big[ \frac{1}{u+\tfrac{1}{2}-w_{i,r}}\chi_{i,r}^{+}, \chi_{j,s}^{+} \Big]\Big]\\
&\hskip0.5cm +\Big[\frac{1}{u+\tfrac{1}{2}+ w_{i,r}}\chi_{i,r}^{-},\big[\chi_{i,r}^{+}, \chi_{j,s}^{+} \big]\Big]+\Big[\chi_{i,r}^{+},\Big[ \frac{1}{u+\tfrac{1}{2}+w_{i,r}}\chi_{i,r}^{-}, \chi_{j,s}^{+} \Big]\Big]\bigg) \\
=& \sum_{r=1}^{\fkv_i}\bigg(\Big[\frac{\chi_{i,r}^{+}}{u+\tfrac{1}{2}- w_{i,r}},\frac{\chi_{i,r}^{-}\chi_{j,s}^{+}}{w_{i,r}+w_{j,s}+\tfrac12} \Big]+\Big[\chi_{i,r}^{-},\frac{-\chi_{i,r}^{+}\chi_{j,s}^{+}}{(u+\tfrac{1}{2}-w_{i,r})(w_{i,r}-w_{j,s}-\tfrac12)} \Big]\\
&\hskip0.5cm +\Big[\frac{\chi_{i,r}^{-}}{u+\tfrac{1}{2}+ w_{i,r}},\frac{-\chi_{i,r}^{+}\chi_{j,s}^{+}}{w_{i,r}-w_{j,s}-\tfrac12} \Big]+\Big[\chi_{i,r}^{+},\frac{\chi_{i,r}^{-}\chi_{j,s}^{+}}{(u+\tfrac{1}{2}+w_{i,r})(w_{i,r}+w_{j,s}+\tfrac12)} \Big]\bigg)\\
=&\sum_{r=1}^{\fkv_i}\bigg(\frac{\chi_{i,r}^+\chi_{i,r}^-\chi_{j,s}^+}{(u+\tfrac{1}{2}- w_{i,r})(w_{i,r}+w_{j,s}-\tfrac12)}-\frac{(w_{i,r}-w_{j,s}+\tfrac32)\chi_{i,r}^-\chi_{i,r}^+\chi_{j,s}^+}{(u-\tfrac{1}{2}- w_{i,r})((w_{i,r}+\tfrac12)^2-w_{j,s}^2)}\\
&\hskip0.5cm -\frac{\chi_{i,r}^-\chi_{i,r}^+\chi_{j,s}^+}{(u-\tfrac{1}{2}- w_{i,r})(w_{i,r}-w_{j,s}+\tfrac12)}+\frac{(w_{i,r}+w_{j,s}-\tfrac32)\chi_{i,r}^+\chi_{i,r}^-\chi_{j,s}^+}{(u+\tfrac{1}{2}- w_{i,r})((w_{i,r}-\tfrac12)^2-w_{j,s}^2)}\\
&\hskip0.5cm -\frac{\chi_{i,r}^-\chi_{i,r}^+\chi_{j,s}^+}{(u+\tfrac{1}{2}+ w_{i,r})(w_{i,r}-w_{j,s}+\tfrac12)}+\frac{(w_{i,r}+w_{j,s}-\tfrac32)\chi_{i,r}^+\chi_{i,r}^-\chi_{j,s}^+}{(u-\tfrac{1}{2}+ w_{i,r})((w_{i,r}-\tfrac12)^2-w_{j,s}^2)}\\
&\hskip0.5cm +\frac{\chi_{i,r}^+\chi_{i,r}^-\chi_{j,s}^+}{(u-\tfrac{1}{2}+ w_{i,r})(w_{i,r}+w_{j,s}-\tfrac12)}-\frac{(w_{i,r}-w_{j,s}+\tfrac32)\chi_{i,r}^-\chi_{i,r}^+\chi_{j,s}^+}{(u+\tfrac{1}{2}+ w_{i,r})((w_{i,r}+\tfrac12)^2-w_{j,s}^2)}\bigg)\\
=&\sum_{r=1}^{\fkv_i}\bigg(\frac{2(w_{i,r}-1)\chi_{i,r}^+\chi_{i,r}^-\chi_{j,s}^+}{(u+\tfrac{1}{2}- w_{i,r})((w_{i,r}-\tfrac12)^2-w_{j,s}^2)} - \frac{2(w_{i,r}+1)\chi_{i,r}^-\chi_{i,r}^+\chi_{j,s}^+}{(u-\tfrac{1}{2}- w_{i,r})((w_{i,r}+\tfrac12)^2-w_{j,s}^2)}\\
&\hskip0.5cm-\frac{2(w_{i,r}+1)\chi_{i,r}^-\chi_{i,r}^+\chi_{j,s}^+}{(u+\tfrac{1}{2}+ w_{i,r})((w_{i,r}+\tfrac12)^2-w_{j,s}^2)} + \frac{2(w_{i,r}-1)\chi_{i,r}^+\chi_{i,r}^-\chi_{j,s}^+}{(u-\tfrac{1}{2}+ w_{i,r})((w_{i,r}-\tfrac12)^2-w_{j,s}^2)}\bigg).
\end{align*}
Recall $\W_{j,s}^\diamond(u)$ from \eqref{diamond} and set (cf. \eqref{eq:Omega})
\[
\Omega_{i;j,s}(u)=\frac{\bm\varkappa(u-\tfrac12)^{\vartheta_i}\W_{i-1}(u-\tfrac{1}{2})\W_{j,s}^\diamond(u-\tfrac{1}{2})\Z_i(u-\tfrac{1}2)}{\W_i(u)\W_i(u-1)}.
\]
Note that $\big((u-\hf)^2-w_{j,s}^2\big)\Omega_{i;j,s}(u)=\Omega_i(u)$.

We rewrite the above formula using \eqref{chi+-}--\eqref{chi-+} in two cases, i.e., $s=0$ and $s\gge 1$.
If $s=0$, then the above formula is equal to
\begin{align*}
\sum_{r=1}^{\fkv_i}\bigg(&\frac{(1-2w_{i,r})}{(w_{i,r}-\tfrac12)^2-w_{j,s}^2}\mathrm{Res}_{u=-w_{i,r}+\tfrac{1}{2}}\Omega_i(u+\hf)\Big(\frac{1}{u+\tfrac{1}{2}- w_{i,r}}+\frac{1}{u-\tfrac{1}{2}+ w_{i,r}}\Big)\\
&-\frac{(2w_{i,r}+1)}{(w_{i,r}+\tfrac12)^2-w_{j,s}^2}\mathrm{Res}_{u=w_{i,r}+\hf}\Omega_i(u+\hf)\Big(\frac{1}{u-\tfrac{1}{2}- w_{i,r}}+\frac{1}{u+\tfrac{1}{2}+ w_{i,r}}\Big)\bigg)\chi_{j,s}^+.
\end{align*}
If $s\gge 1$, then the above formula is equal to
\begin{align*}
\sum_{r=1}^{\fkv_i}\bigg(& \big(1-2w_{i,r}\big)\mathrm{Res}_{u=-w_{i,r}+\hf}\Omega_{i;j,s}(u+\hf)\Big(\frac{1}{u+\tfrac{1}{2}- w_{i,r}}+\frac{1}{u-\tfrac{1}{2}+ w_{i,r}}\Big)\\
&-\big(2w_{i,r}+1\big)\mathrm{Res}_{u=w_{i,r}+\hf}\Omega_{i;j,s}(u+\hf)\Big(\frac{1}{u-\tfrac{1}{2}- w_{i,r}}+\frac{1}{u+\tfrac{1}{2}+ w_{i,r}}\Big)\bigg)\chi_{j,s}^+.
\end{align*}
Now we consider the right-hand side of \eqref{serregl}. Then 
\begin{align*}
&\Phi_\mu^\la\big(B_j(-u+\tfrac12)H_i(u)-B_j(u+\tfrac12)H_i(u)\big)\\
=&\sum_{s=0}^{\fkv_j}\Big(\frac{\chi_{j,s}^+}{-u+1-w_{j,s}}+\frac{\chi_{j,s}^-}{-u+1+w_{j,s}}\\
&\hskip1cm-\frac{1}{u+1-w_{j,s}}\chi_{j,s}^+-\frac{1}{u+1+w_{j,s}}\chi_{j,s}^-\Big)\Omega_i(u+\hf)\\
=&-\frac{2u\Omega_i(u+\hf)}{u^2-w_{j,0}^2}\chi_{j,0}^+
\\&+\sum_{s=1}^{\fkv_j}\Omega_{i;j,s}(u+\hf)\bigg(\Big(\frac{u^2-(w_{j,s}-1)^2}{-u+1-w_{j,s}}-\frac{u^2-(w_{j,s}-1)^2}{u+1-w_{j,s}}\Big)\chi_{j,s}^+\\
&\hskip5.7cm+\Big(\frac{u^2-(w_{j,s}+1)^2}{-u+1+w_{j,s}}-\frac{u^2-(w_{j,s}+1)^2}{u+1+w_{j,s}}\Big)\chi_{j,s}^-\bigg)\\
=&-\frac{2u\Omega_i(u+\hf)}{u^2-w_{j,0}^2}\chi_{j,0}^+
-\sum_{s=1}^{\fkv_j}2u\Omega_{i;j,s}(u+\hf)(\chi_{j,s}^++\chi_{j,s}^-).
\end{align*}
Comparing the coefficients of $\chi_{j,0}^+$ and $\chi_{j,s}^+$ for $s\gge 1$, the Serre relation \eqref{serregl} follows from the formulas established above and Lemma \ref{lem:std} (see the end of the previous subsection where this lemma was applied similarly). Again, we need a case-by-case study. We only remark the following.
\begin{itemize}
    \item The term \eqref{inv0new} contributes only if $\theta_i=1$, in which case $\chi_{j,0}^+=0$ by the parity assumption \eqref{parity}.
    \item If $\chi_{j,0}^+\ne 0$, then $\W_j(u)$ is an odd function and by \eqref{vartheta} we have $\vartheta_i=1$. It is easy to see that $2u\Omega_i(u+\hf)$ is regular at $u=0$ and has simple zeros at $u=\pm w_{j,0}$. Similarly, $2u\Omega_{i;j,s}(u+\hf)$ is regular at $u=0$.
\end{itemize}

This completes the proof of Theorem \ref{thm:iGKLO:gl}.

\subsection{Completing the proof except quasi-split type A$_{2n}$}
\label{ssec:qspf}
It remains to prove Theorem \ref{thm:GKLOquasisplit}.

In this subsection, we exclude the quasi-split type A$_{2n}$. Note that $c_{i,\tau i}\ne -1$ for all $i\in \I$, the Serre relation \eqref{SerreIII2} does not show up and $\wp_i=0$ for all $i\in\I$. According to Lemma \ref{typeArel} and Remark \ref{rem:a3}, it is not hard to see that the verification of all the relations will be either similar to the case for the ordinary shifted Yangians, or the case for shifted iYangians of split type A. Here we only discuss for example the Serre relation for $c_{ij}=-1$ as the relation for $c_{ij}=0$ reduces to the case of shifted Yangian.

Suppose $c_{ij}=-1$. Note that $j\ne \tau i$, then we have several cases. First, one needs to verify that similar identities hold as in Lemma \ref{lem:chi}. We proceed case-by-case. 
\begin{enumerate}
    \item If both $i$ and $j$ are not fixed by $\tau$, then this is the case of ordinary shifted Yangian as verified in \cite[Appendix B]{BFN19}.
    \item If both $i$ and $j$ are fixed by $\tau$, then this is the split case as verified in \S\ref{sec:serre}.
    \item If $i$ is fixed by $\tau$ and $j$ is not, i.e., $i\in \I_0$ and $j\notin \I_0$, then we need to verify the relation \eqref{SerreIII}. Then the detail is parallel to that of \S\ref{sec:serre}. Again one needs to carefully deal with the terms containing $\chi_{i,0}^+\chi_{i,0}^+$ and $\chi_{i,r}^\pm\chi_{i,r}^\mp$ (those are scalar functions) while all other terms cancel due to Lemma \ref{lem:chi}.
    \item If $j$ is fixed by $\tau$ and $i$ is not, i.e., $i\notin \I_0$ and $j\in \I_0$, then we need to verify the relation \eqref{eq:Serre-ord}. Again, we argue as in \S\ref{sec:serre}. Since $i$ is not fixed by $\tau$ and $j\ne \tau i$, there is no way to obtain constant terms (scalar rational functions without difference operators). Thus this case essentially corresponds to the beginning of \S\ref{sec:serre}.
\end{enumerate}

\subsection{Completing the proof for the quasi-split type A$_{2n}$}
\label{ssec:qsAIII2n}

In this subsection, we complete the proof of Theorem \ref{thm:GKLOquasisplit} for the case of quasi-split type A$_{2n}$. We shall only verify the most complicated relations \eqref{bbNqs} and \eqref{SerreIII2} for the case $j=\tau i$ and $c_{i,\tau i}=-1$ for the iGKLO homomorphism in Theorem \ref{thm:GKLOquasisplit}. We prove the case $i\rightarrow j$ (as the other case $i\leftarrow j$ is similar). Then $\wp_i=-1$ and $\wp_j=1$. 

\subsubsection{The relation \eqref{bbNqs}}
We work with the corresponding relations in terms of generating series,
\be
\begin{split}
(u-v)[B_i(u),B_j(v)]=-\frac{1}{2}[B_i(u),B_j(v)]_+&+\big([B_i^{(1)},B_j(v)]-[B_i(u),B_j^{(1)}]\big)\\
&-\Big(\frac{2u}{u+v}\big(H_i(u)\big)^\star+\frac{2v}{u+v}\big(H_j(v)\big)^\star\Big),
\end{split}
\ee
see \cite[(3.21)]{LZ24}.  It can be equivalently written as
\beq\label{bbNqs-gen}
\begin{split}
(u-v+\tfrac12)B_i(u)B_j(v)&+B_i(u)B_j^{(1)}-B_i^{(1)}B_j(v)\\
= (u-v-\tfrac{1}{2})B_j(v)B_i(u)&+B_j^{(1)}B_i(u)-B_j(v)B_i^{(1)}\\
&-\Big(\frac{2u}{u+v}\big(H_i(u)\big)^\star+\frac{2v}{u+v}\big(H_j(v)\big)^\star\Big),
\end{split}
\eeq
cf. \eqref{eepf3}. We proceed as in \S\ref{ssec:bb} by introducing $\chi_{i,r}$ and $\chi_{j,s}$ for $1\lle r\lle \fkv_i$ and $1\lle s\lle \fkv_j$ such that
\beq\label{BIBJ}
\Phi_{\mu}^\la(B_i(u))=\sum_{r=1}^{\fkv_i}\frac{1}{u+\tfrac12-w_{i,r}}\chi_{i,r},\qquad \Phi_{\mu}^\la(B_j(u))=\sum_{s=1}^{\fkv_j}\frac{1}{u+\tfrac12-w_{j,s}}\chi_{j,s}.
\eeq
Note that we have used \eqref{signzw}. It is convenient to write $r':=\fkv_i+1-r$ for $1\lle r\lle \fkv_i$.
\begin{lem}\label{lem-chi-com}
If $r\ne s$, then we have
\begin{align*}
(w_{i,r}-w_{i,s}-1)\chi_{i,r}\chi_{i,s}&=(w_{i,r}-w_{i,s}+1)\chi_{i,s}\chi_{i,r},\\
(w_{i,r}-w_{j,s'}+\tfrac12)\chi_{i,r}\chi_{j,s'}&=(w_{i,r}-w_{j,s'}-\tfrac12)\chi_{j,s'}\chi_{i,r}.
\end{align*}
\end{lem}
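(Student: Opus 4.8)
The plan is to mimic the proof of Lemma~\ref{lem:chi}: render each $\chi$ explicitly as a scalar rational function of the variables $\bm w$ times a single shift operator $\dfo_{i,\bullet}^{\pm1}$, move all shift operators to the right of the scalars via \eqref{A:relations}, and then verify the resulting elementary identities of rational functions.

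First I would unpack the definitions. Using the iGKLO formula of Theorem~\ref{thm:GKLOquasisplit} for $i\in\I\setminus\I_0$, together with the conventions \eqref{signzw2} and \eqref{signzw}, I would rewrite the second summand of each of $\Phi_\mu^\la(B_i(u))$ and $\Phi_\mu^\la(B_{\tau i}(u))$ via the substitutions $w_{\tau i,t}=-w_{i,\fkv_i+1-t}$ and $\dfo_{\tau i,t}=\dfo_{i,\fkv_i+1-t}^{-1}$, so that both series acquire the shape \eqref{BIBJ}. Reading off coefficients, and writing $s'=\fkv_i+1-s$, one obtains
\[
\chi_{i,r}=a_r(\bm w)\,\dfo_{i,r}^{-1},\qquad \chi_{j,s'}=b_{s'}(\bm w)\,\dfo_{i,s},\qquad w_{j,s'}=-w_{i,s},
\]
with $a_r,b_{s'}$ explicit rational functions of $\bm w$ (the $\bm z$ being central); the coefficient $\chi_{j,s'}$ may come from either of the two summands of $\Phi_\mu^\la(B_{\tau i}(u))$, but both subcases are handled identically. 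The shift operators $\dfo_{i,r}^{-1}$ and $\dfo_{i,s}$ occupy distinct slots precisely because $r\ne s$, so they commute by \eqref{A:relations}; conjugation by $\dfo_{i,r}^{-1}$ (resp.~by $\dfo_{i,s}$) implements $w_{i,r}\mapsto w_{i,r}-1$ (resp.~$w_{i,s}\mapsto w_{i,s}+1$).

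Next I would pin down exactly which factors of $a_r$ and $b_{s'}$ carry the relevant variable dependence; this is the step I expect to be the main obstacle, as it requires careful tracking through all the $(-)$ and sign conventions and through the chosen orientation. Since the orientation fixed in \S\ref{ssec:qsAIII2n} has $i\to\tau i$, the node $\tau i$ is \emph{not} a source of $i$, so $a_r$ depends on $w_{i,t}$ (for $t\ne r$) only through the factor $(w_{i,r}-w_{i,t})^{-1}$ coming from $\W_{i,r}(w_{i,r})^{-1}$, and $a_r$ does not involve $w_{j,\bullet}$ at all. By contrast $i$ \emph{is} a source of $\tau i$, so $b_{s'}$ contains the factor $\W_i$ evaluated at $w_{j,s'}\mp\tfrac12=-w_{i,s}\mp\tfrac12$; hence $b_{s'}$ depends on $w_{i,r}$ through the factor $w_{i,r}+w_{i,s}\pm\tfrac12$ together with a factor $(w_{i,r}-w_{i,s})^{\pm1}$ from the relevant $\W_{\bullet,\bullet}$-denominator. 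A short check, using that in type $\mathrm{A}_{2n}$ the remaining neighbour of $\tau i$ only involves $w_{\bullet}$-variables disjoint from $w_{i,\bullet}$, confirms there is no further hidden dependence.

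Finally I would push the shift operators to the right and compare coefficients. The first identity reduces to
\[
(w_{i,r}-w_{i,s}-1)\,a_r\,\big(a_s|_{w_{i,r}\mapsto w_{i,r}-1}\big)=(w_{i,r}-w_{i,s}+1)\,a_s\,\big(a_r|_{w_{i,s}\mapsto w_{i,s}-1}\big),
\]
and since each shift contributes the factor $\tfrac{w_{i,r}-w_{i,s}}{w_{i,r}-w_{i,s}\mp1}$, the two sides collapse to the common value $(w_{i,r}-w_{i,s})\,a_r a_s$. The second identity, after substituting $w_{j,s'}=-w_{i,s}$, reduces to
\[
(w_{i,r}+w_{i,s}+\tfrac12)\,a_r\,\big(b_{s'}|_{w_{i,r}\mapsto w_{i,r}-1}\big)=(w_{i,r}+w_{i,s}-\tfrac12)\,b_{s'}\,\big(a_r|_{w_{i,s}\mapsto w_{i,s}+1}\big);
\]
here the shift of $b_{s'}$ produces a factor $\tfrac{w_{i,r}+w_{i,s}-1/2}{w_{i,r}+w_{i,s}+1/2}$ that absorbs the difference in prefactors, while the residual $(w_{i,r}-w_{i,s})^{\pm1}$-shifts on the two sides agree, so again both sides collapse to the same rational function. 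Apart from the bookkeeping above, this is the same one-line cancellation as in Lemma~\ref{lem:chi}.
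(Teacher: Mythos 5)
Your proposal is correct and is essentially the paper's own (unwritten) argument: the paper disposes of this lemma with ``a direct calculation,'' and that calculation is exactly what you spell out --- write $\chi_{i,r}=a_r\,\dfo_{i,r}^{-1}$ and $\chi_{j,s'}=b_{s'}\,\dfo_{i,s}$ using the conventions \eqref{signzw2}--\eqref{signzw} and the chosen orientation $i\to\tau i$, push the shift operators to the right via \eqref{A:relations}, and verify the resulting elementary rational-function identities, which you do correctly. One clarification: since $\W_{\tau i}(u)=\W_i^-(u)$ and $w_{j,s'}=-w_{i,s}$, the $w_{i,r}$-dependent part of $b_{s'}$ is $(w_{i,r}+w_{i,s}+\tfrac12)(w_{i,r}-w_{i,s})^{-1}$ in \emph{both} summands of $\Phi_\mu^\la(B_{\tau i}(u))$, so your ``$\pm\tfrac12$'' resolves to ``$+\tfrac12$'' and the final displayed check indeed covers both subcases.
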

\begin{proof}
Follows by a direct calculation.
\end{proof}

Thus it follows from the same calculation as in \S\ref{ssec:bb}  that all $\chi_{i,r}\chi_{j,s'}$ (or $\chi_{j,s'}\chi_{i,r}$) with $r\ne s$ in the LHS and RHS of  \eqref{bbNqs-gen} cancel. Hence we are left with terms involving $\chi_{i,r}\chi_{j,r'}$ and $\chi_{j,r'}\chi_{i,r}$ summed over $r$ and the scalar series from $H_i(u)$ and $H_j(v)$. Note that by \eqref{signzw}, we have $w_{i,r}=-w_{j,r'}$ and $\eth_{i,r}=\eth_{j,r'}^{-1}$ for $1\lle r\lle \fkv_i$. Thus these terms do not involve the difference operators.

The constant terms from the LHS of \eqref{bbNqs-gen} are given by
\begin{align*}
\frac{1}{u+v}\sum_{r=1}^{\fkv_i}\Big(\frac{2w_{i,r}-\frac12}{u+\frac12-w_{i,r}}-\frac{2w_{j,r'}+\frac12}{v-\frac12-w_{j,r'}}\Big)\chi_{i,r}\chi_{j,r'},
\end{align*}
while the constant terms from $(u-v-\tfrac{1}{2})B_j(v)B_i(u)+B_j^{(1)}B_i(u)-B_j(v)B_i^{(1)}$ is given by
\begin{align*}
\frac{1}{u+v}\sum_{r=1}^{\fkv_i}\Big(\frac{2w_{i,r}+\frac12}{u-\frac12-w_{i,r}}-\frac{2w_{j,r'}-\frac12}{v+\frac12-w_{j,r'}}\Big)\chi_{j,r'}\chi_{i,r}.
\end{align*}
Note that $i\rightarrow j$. Expressing $\chi_{i,r}\chi_{j,r'}$ and $\chi_{j,r'}\chi_{i,r}$ explicitly, we have 
\beq\label{rr'}
\begin{split}
\chi_{i,r}\chi_{j,r'}&=\frac{(2w_{i,r}-\tfrac32)\Z_i(w_{i,r}-\frac12)}{(2w_{i,r}-\frac12)\W_{i,r}(w_{i,r})\W_{i,r}(w_{i,r}-1)}\prod_{k\leftrightarrow i}\W_k(w_{i,r}-\tfrac12),\\
&=\frac{(2w_{j,r'}+\tfrac32)\Z_j(w_{j,r'}+\frac12)}{(2w_{j,r'}+\frac12)\W_{j,r'}(w_{j,r'})\W_{j,r'}(w_{j,r'}+1)}\prod_{k\leftrightarrow j}\W_k(w_{j,r'}+\tfrac12).
\end{split}
\eeq
The formulas for $\chi_{j,r'}\chi_{i,r}$ are similar but without the extra ratios (due to the choice of $i\rightarrow j$):
\beq\label{r'r}
\begin{split}
\chi_{j,r'}\chi_{i,r}&=\frac{\Z_j(w_{j,r'}-\frac12)}{\W_{j,r'}(w_{j,r'})\W_{j,r'}(w_{j,r'}-1)}\prod_{k\leftrightarrow j}\W_k(w_{j,r'}-\tfrac12),\\
&=\frac{\Z_i(w_{i,r}+\frac12)}{\W_{i,r}(w_{i,r})\W_{i,r}(w_{i,r}+1)}\prod_{k\leftrightarrow i}\W_k(w_{i,r}+\tfrac12).
\end{split}
\eeq
The rest follows the same type of argument at the end of \S\ref{ssec:eepf}, now with the help of Lemma \ref{lem:std}. We remark that the extra factor $1\pm\frac{1}{4u}$ accounts for the factors
\[
2u\big(1\pm \tfrac{1}{4u}\big)\Big|_{u=w_{i,r}\pm \frac12}=2w_{i,r}\pm \tfrac32,\qquad 2u\big(1\pm \tfrac{1}{4u}\big)\Big|_{u=w_{i,r}\mp \frac12}=2w_{i,r}\mp \tfrac12,
\]
and similar factors for $w_{j,r'}$.

\subsubsection{The relation \eqref{SerreIII2}}
\label{ssec:SerreIII}

By Lemma \ref{lem-reduction}, it suffices to verify the relation \eqref{SerreIII2} is preserved by $\Phi_\mu^\la$ for the case $s_1=s_2=s=1$. As first pointed out by \cite{SSX25}, it is actually easier to prove the following more general relation in generating function form
\beq\label{serreA2}
\big[B_i^{(1)},[B_i^{(1)},B_j(u)]\big]=\big(4u[B_i(3u),H_j(u)]\big)^\star,
\eeq
which implies that the relation \eqref{SerreIII2} is preserved by $\Phi_\mu^\la$ for $s_1=s_2=1$ and arbitrary $s>0$.

Recall the image of $B_i(u)$ from \eqref{BIBJ} and note that $w_{j,r'}=-w_{i,r}$. It follows from the identity
\begin{align*}
\frac{1}{3u+\frac12-w_{i,r}}\Big(\frac{u-w_{i,r}+1}{(u-\frac12+w_{i,r})(u-\frac32+w_{i,r})}-\frac{u-w_{i,r}}{(u+\frac12+w_{i,r})(u-\frac12+w_{i,r})}\Big)\\
=\frac{1}{(u+\frac12+w_{i,r})(u-\frac12+w_{i,r})(u-\frac32+w_{i,r})}
\end{align*}
that  
\begin{align}
    \label{Phi:LHS}
\Phi_\mu^\la \Big(4u[B_i(3u),H_j(u)] \Big) = \sum_{r=1}^{\fkv_i} \Xi_{i,r}(u) \chi_{i,r}, 
\end{align}
where 
\beq\label{Xidef}
\Xi_{i,r}(u):=\frac{(4u+1)\Z_j(u)\W_{i,r}(u)}{(u-\frac32+w_{i,r})\W_j(u+\frac12)\W_j(u-\frac12)}\prod_{\substack{k\leftrightarrow j\\k\ne i} }\W_{k}(u).
\eeq

On the other hand, the image of $\big[B_i^{(1)},[B_i^{(1)},B_j(u)]\big]$ under the map $\Phi_\mu^\la$ is
\beq\label{r1r2s}
\left[\sum_{r_1=1}^{\fkv_i}\chi_{i,r_1},\Big[\sum_{r_2=1}^{\fkv_i}\chi_{i,r_2},\sum_{s=1}^{\fkv_i}\frac{1}{u+\frac12-w_{j,s'}}\chi_{j,s'}\Big]\right].
\eeq
If $r_1\ne s$ and $r_2\ne s$, then a similar calculation as in \S\ref{sec:serre} using Lemma \ref{lem-chi-com} shows that
\[
\mathrm{Sym}_{r_1,r_2}\left[\chi_{i,r_1},\Big[\chi_{i,r_2},\frac{1}{u+\frac12-w_{j,s'}}\chi_{j,s'}\Big]\right]=0.
\]
Thus only the terms with either $r_1=s$ or $r_2=s$ in \eqref{r1r2s} survive and will make a nontrivial contribution to $\Phi_\mu^\la\big(\big[B_i^{(1)},[B_i^{(1)},B_j(u)]\big]\big)$.
Denote by $X_r$ the sum of the surviving terms containing $\chi_{i,r}$ from \eqref{r1r2s}, for $1\lle r\lle \fkv_i$. From the discussion above, we have 
\begin{align} \label{Phi:RHS}
\Phi_\mu^\la\Big(\big[B_i^{(1)},[B_i^{(1)},B_j(u)]\big]\Big) = \sum_{r=1}^{\fkv_i} X_r, 
\end{align}
where
\begin{align} \label{eq:XX}
X_r=X_r^\circ + \sum_{s=1,s\ne r}^{\fkv_i} (X'_s +X''_s), 
\end{align}
with
\begin{align*}
X_r^\circ&= \left[\chi_{i,r},\Big[\chi_{i,r},\frac{1}{u+\frac12-w_{j,r'}}\chi_{j,r'}\Big]\right], \qquad
X'_s =  \left[\chi_{i,r},\Big[\chi_{i,s},\frac{1}{u+\frac12-w_{j,s'}}\chi_{j,s'}\Big]\right],
\\
X''_s &=\left[\chi_{i,s},\Big[\chi_{i,r},\frac{1}{u+\frac12-w_{j,s'}}\chi_{j,s'}\Big]\right].
\end{align*}

The summand $X_r^\circ$ is equal to
\begin{align*}
X_r^\circ=\frac{1}{u-\frac32+w_{i,r}}\chi_{i,r}\chi_{i,r}\chi_{j,r'}-\frac{2}{u-\frac12+w_{i,r}}\chi_{i,r}\chi_{j,r'}\chi_{i,r}+\frac{1}{u+\frac12+w_{i,r}}\chi_{j,r'}\chi_{i,r}\chi_{i,r}.
\end{align*}
It follows from \eqref{rr'}--\eqref{r'r} (and recall $w_{j,r'}=-w_{i,r}$ and \eqref{Xidef} here) that it can be rewritten as
\beq\label{jojo1}
\begin{split}
X_r^\circ=\Big(\frac{1}{u-\frac32+w_{i,r}}\mathrm{Res}_{u=-w_{i,r}+\frac32}&+\frac{1}{u-\frac12+w_{i,r}}\mathrm{Res}_{u=w_{j,r'}+\frac12}\\&+\frac{1}{u+\frac12+w_{i,r}}\mathrm{Res}_{u=w_{j,r'}-\frac12}\Big)\,\Xi_{i,r}(u)\chi_{i,r}.
\end{split}
\eeq
Now let $1\lle s \lle \fkv_i$ with $s\neq r$. We have
\begin{align*}
X'_s =&\left[\chi_{i,r},\Big[\chi_{i,s},\frac{1}{u+\frac12-w_{j,s'}}\chi_{j,s'}\Big]\right]\\
=\,&\frac{\chi_{i,r}\chi_{i,s}\chi_{j,s'}-\chi_{i,s}\chi_{j,s'}\chi_{i,r}}{u-\frac12-w_{j,s'}}+\frac{\chi_{j,s'}\chi_{i,s}\chi_{i,r}-\chi_{i,r}\chi_{j,s'}\chi_{i,s}}{u+\frac12-w_{j,s'}}.
\end{align*}
On the other hand, it follows from Lemma \ref{lem-chi-com} that
\begin{align*}
X''_s =&\left[\chi_{i,s},\Big[\chi_{i,r},\frac{1}{u+\frac12-w_{j,s'}}\chi_{j,s'}\Big]\right]\\
=\ & \left[\chi_{i,s}, \frac{-1}{(u+\frac12-w_{j,s'})(w_{i,r}-w_{j,s'}+\frac12)}\chi_{j,s'}\chi_{i,r}\right]\\
=\ & \frac{-\chi_{i,s}\chi_{j,s'}\chi_{i,r}}{(u-\frac12-w_{j,s'})(w_{i,r}-w_{j,s'}-\frac12)}+\frac{\chi_{i,r}\chi_{j,s'}\chi_{i,s}}{(u+\frac12-w_{j,s'})(w_{i,r}-w_{j,s'}-\frac12)}
\end{align*}
Summing up the above two expressions, we have  
\begin{align*}
X'_s +X''_s &=\frac{1}{u-\frac12-w_{j,s'}}\Big(\chi_{i,r}\chi_{i,s}\chi_{j,s'}-\frac{w_{i,r}-w_{j,s'}+\frac12}{w_{i,r}-w_{j,s'}-\frac12}\,\chi_{i,s}\chi_{j,s'}\chi_{i,r}\Big)\\
&\quad+\frac{1}{u+\frac12-w_{j,s'}}\Big(\chi_{j,s'}\chi_{i,s}\chi_{i,r}-\frac{w_{i,r}-w_{j,s'}-\frac32}{w_{i,r}-w_{j,s'}-\frac12}\,\chi_{i,r}\chi_{j,s'}\chi_{i,s}\Big).
\end{align*}
Using \eqref{rr'}--\eqref{r'r} along with the identities
\begin{align*}
\frac{(w_{j,s'}+w_{i,r}+1)(w_{j,s'}+\frac32-w_{i,r})}{w_{j,s'}+w_{i,r}-1}&+(w_{i,r}-w_{j,s'}+\tfrac12)=\frac{4w_{j,s'}+1}{w_{j,s'}-1+w_{i,r}},\\
\frac{w_{j,s'}-\frac12-w_{i,r}}{(w_{j,s'}+w_{i,r})(w_{j,s'}-1+w_{i,r})}&+\frac{w_{i,r}-\frac32-w_{j,s'}}{(w_{j,s'}-1+w_{i,r})(w_{j,s'}-2+w_{i,r})}\\
&=\frac{4w_{j,s'}-1}{(w_{j,s'}+w_{i,r})(w_{j,s'}-1+w_{i,r})(w_{j,s'}-2+w_{i,r})},
\end{align*}
we can rewrite the previous equation as
\beq\label{jojo2}
X'_s +X''_s  =
\Big(\frac{1}{u-\frac12-w_{j,s'}}\mathrm{Res}_{u=w_{j,s'}+\frac12}+\frac{1}{u+\frac12-w_{j,s'}}\mathrm{Res}_{u=w_{j,s'}-\frac12}\Big)\,\Xi_{i,r}(u)\chi_{i,r}.
\eeq
Plugging \eqref{jojo1}--\eqref{jojo2} into \eqref{eq:XX} and applying Lemma \ref{lem:std} we obtain that $X_r =\big(\Xi_{i,r}(u)\big)^\star \chi_{i,r}$. Now comparing \eqref{Phi:LHS} with \eqref{Phi:RHS} completes the proof that the relation \eqref{SerreIII2} is preserved by $\Phi_\mu^\la$. 

This completes the proof of Theorem \ref{thm:GKLOquasisplit}.

\section{Identification of two definitions of TSTY}
\label{sec:2TSTY}

In this section, we make precise the connections between the TSTY for split type A (type AI) here with a distinguished family of TSTY defined in a very different way in \cite[Section~11]{LPTTW25}; that version of TSTY was motivated by its connection to finite W-algebras of classical type.

\subsection{Shifted twisted Yangians and TSTY from \cite{LPTTW25}}
\label{ssec:convLPTTW25}

Let $\g=\fksl_n$ and $\tau=\mathrm{id}$. We consider the special case $\lambda=N\varpi_1^\vee$ where $\varpi_1^\vee$ is the first fundamental coweight. Let $\mu$ be a dominant even coweight such that $N\varpi_1^\vee\gge \mu$. Recall $\bv_i\in\bN$ from \eqref{ell} and $\theta_i$ from \eqref{ell_theta}. Define $p_i\in\bN$ by 
\begin{align}  \label{eq:pi_vi}
p_1=\bv_{n-1},\quad p_{i}=\bv_{n-i}-\bv_{n-i+1},\quad p_n=N-\bv_1, \quad (1< i<n ).
\end{align}
Then $(p_n, p_{n-1}, \ldots, p_1)$ form a partition of $N$ such that $p_{i+1}-p_i=\langle \mu,\alpha_{n-i}\rangle$. It follows that all $p_i$ have the same parity.  The parity assumption defined in \cite[(11.1)]{LPTTW25} then is equivalent to the evenness condition of $\mu$. This partition determines  a pair $(\sigma,\ell)$, where $\ell=p_n$ and $\sigma=(\fks_{i,j})_{1\lle i,j\lle n}$ is a symmetric matrix satisfying $\fks_{i,i+1}=\hf(p_{i+1}-p_i)$ and $\fks_{i,j}+\fks_{j,m}=\fks_{i,m}$ provided $|i-j|+|j-m|=|i-m|$. We call $\sigma$ a shift matrix.

The shifted twisted Yangian of type AI, denoted by $\cy_n^+(\sigma)$, can be defined in parabolic presentation in \cite[\S8.1]{LPTTW25} with a composition of $n$ that is admissible to the shift matrix $\sigma$, or alternatively, in Drinfeld presentation in \cite[\S8.3]{LPTTW25}. It has been proved in \cite[Proposition~8.9]{LPTTW25} that the two shifted twisted Yangians are isomorphic. Here we only recall the Drinfeld presentation.

Let $C=(c_{ij})_{1\lle i,j<n}$ be the Cartan matrix of type A$_{n-1}$, and set $c_{0j}=-\delta_{j,1}$.
\begin{dfn}[{\cite[Theorem~5.3]{LWZ25}}]
The twisted Yangian of type AI is the algebra $\cy_n^+$ over $\bC$ 
generated by $\{\ch_{i}^{(r)}\}_{r> 0}$, $\{\cb_{j}^{(r)}\}_{r>0}$, for $0\lle i<n$ and $1\lle j<n$, subject to the following relations, for $r_1,r_2,r,s \in \bZ_{>0}$:
\begin{align}
[ \ch_{i}^{(r)}, \ch_{j}^{(s)}]&=0, \qquad { \ch_{i}^{(2r-1)}=0, } \label{drs1}\\	
[ \ch_{i}^{(r+1)}, \cb_{j}^{(s)}]-[ \ch_{i}^{(r-1)}, \cb_{j}^{(s+2)}]&=c_{ij}[ \ch_{i}^{(r)}, \cb_{j}^{(s+1)}]_+ +\frac{1}{4}c_{ij}^2[ \ch_{i}^{(r-1)}, \cb_{j}^{(s)}],\label{drs2}\\
[ \cb_{i}^{(r+1)}, \cb_{j}^{(s)}]-[ \cb_{i}^{(r)}, \cb_{j}^{(s+1)}]&=\frac{c_{ij}}{2}[\cb_{i}^{(r)}, \cb_{j}^{(s)}]_+-2\delta_{ij}(-1)^r \ch_{i}^{(r+s)},\label{drs3}\\
[\cb_{i}^{(r)},\cb_{j}^{(s)}]&=0,\qquad  \text{ for }|i-j|>1,\label{drs4}\\
\mathrm{Sym}_{r_1,r_2}\big[\cb_{i}^{(r_1)},[\cb_{i}^{(r_2)},\cb_{j}^{(r)}] \big]&= \notag
\\
(-1)^{r_1}\sum_{p\gge 0}2^{-2p}  \big([\ch_{i}^{(r_1+r_2-2p-1)}&,\cb_{j}^{(r+1)}]-[\ch_{i}^{(r_1+r_2-2p-1)},\cb_{j}^{(r)}]_+\big),
\quad \text{if } c_{i,j}=-1,
 \label{drs5}
\end{align}
for all admissible indices $i,j,r,s$. Here by convention, $\ch_{i}^{(0)}=1$.
\end{dfn}

\begin{dfn}[{\cite[Definition~8.7]{LPTTW25}}]
The (\textit{dominantly}) \textit{shifted Drinfeld twisted Yangian} associated to the shift matrix $\sigma$ is the algebra $\cy_n^+(\sigma)$ over $\bC$ 
generated by $\{\ch_{i}^{(r)}\}_{r> 0}$, $\{\cb_{j}^{(r)}\}_{r>\fks_{j+1,j}}$, for $0\lle i<n$ and $1\lle j<n$, subject to the relations \eqref{drs1}--\eqref{drs5}, for all admissible indices $r_1,r_2,r,s \in \bZ_{>0}$.
\end{dfn}

The shifted twisted Yangian $\cy_n^+(\sigma)$ can be naturally identified as a subalgebra of $\cy_n^+$ by identifying elements with the same symbols.

The twisted Yangian $\cy_n^+$ also contains mutually commuting elements $\mathscr D_i^{(r)}$ for $1\lle i\lle n$ and $r>0$ defined as follows
\[
\ch_0(u)=\mathscr D_1(u),\qquad \ch_i(u)=\big(\mathscr D_{i}(u-\tfrac{i}2)\big)^{-1}\mathscr D_{i+1}(u-\tfrac{i}{2}),\qquad 1\lle i<n,
\]
where
\[
\ch_i(u):=1+\sum_{r>0}\ch_i^{(r)}u^{-r},\qquad 
\mathscr D_j(u):=1+\sum_{r>0}\mathscr D_j^{(r)}u^{-r},\quad 0\lle i<n,~1\lle j\lle n,
\]
see \cite[\S3.2]{LPTTW25} or \cite[\S3]{LWZ25}.

Introduce $\mathscr Q_i(u)$ for $1\lle i\lle n$ by the rule
\[
\mathscr Q_i(u):=\mathscr D_1(u+\tfrac{i-1}{2})\mathscr D_2(u+\tfrac{i-3}{2})\cdots \mathscr D_{i-1}(u-\tfrac{i-3}{2})\mathscr D_i(u-\tfrac{i-1}{2}).
\]
Then
\beq\label{H-in-Q}
\ch_0(u)=\mathscr D_1(u)=\mathscr Q_1(u),\qquad \ch_i(u)=\frac{\mathscr Q_{i-1}(u)\mathscr Q_{i+1}(u)}{\mathscr Q_i(u-\hf)\mathscr Q_i(u+\hf)},\qquad 1\lle i<n.
\eeq
Moreover, the series $\mathscr Q_n(u)$ corresponds to the Sklyanin determinant $\mathrm{sdet}\,S(u+\tfrac{n-1}{2})$ which is an even series, see \cite[Theorems 2.5.3, 2.12.1]{Mol07}. 

Let $\mathscr{SY}_n^+$ be the subalgebra of $\cy_n^+$ generated by  $\{\ch_{i}^{(r)},\cb_{i}^{(r)}\}_{r>0}$, for $0< i<n$. Denote by $\mathscr{ZY}_n^+$ the center of $\cy_n^+$. Then it is known \cite[Theorem~5.1]{LWZ25} that $\mathscr{SY}_n^+$ is isomorphic to the algebra generated by $\{\ch_{i}^{(r)},\cb_{i}^{(r)}\}_{r>0}$ for $0< i<n$ subject to the relations \eqref{drs1}--\eqref{drs5}.
Define $\mathscr Z^{(r)}$ for $r>0$ by
\beq\label{Q-expand}
\mathscr Q_n(u)=1+\sum_{r>0}\mathscr Z^{(r)}u^{-2r}.
\eeq
Then it is known \cite[Theorem~2.8.2]{Mol07} that the center of $\cy_n^+$ is freely generated by the elements $\mathscr Z^{(r)}$ for $r>0$. Thus it follows from \cite[Theorem~2.9.2]{Mol07} that
\beq
\cy_n^+=\mathscr{SY}_n^+\otimes\mathscr{ZY}_n^+=\mathscr{SY}_n^+\otimes\bC[\mathscr Z^{(r)}\mid r>0].
\eeq

Similarly, let $\mathscr{SY}_n^+(\sigma)$ be the subalgebra of $\cy_n^+$ generated by  $\{\ch_{i}^{(r)}\}_{r> 0}$, $\{\cb_{i}^{(r)}\}_{r>\fks_{i,i+1}}$, for $0< i<n$. Denote by $\mathscr{ZY}_n^+(\sigma)$ the center of $\cy_n^+(\sigma)$. By passing to the associated graded (the loop filtration), one obtains that $\mathscr{ZY}_n^+(\sigma)=\mathscr{ZY}_n^+$ (see e.g. \cite[Remark~8.14]{LPTTW25})  and
\beq\label{sigma-dec}
\cy_n^+(\sigma)=\mathscr{SY}_n^+(\sigma)\otimes\mathscr{ZY}_n^+=\mathscr{SY}_n^+(\sigma)\otimes\bC[\mathscr Z^{(r)}\mid r>0].
\eeq

\begin{lem}\label{lem:kappa}
The map $\kappa:\Yt_\mu\to \mathscr{SY}_n^+(\sigma)$ defined by the rule
\[
 H_{n-i}^{(r)}\mapsto \ch_i^{(r+2\fks_{i,i+1})},\quad B_{n-i}^{(s)}\mapsto \sqrt{(-1)^{\fks_{i,i+1}}}\cb_i^{(s+\fks_{i,i+1})},
\]
for $0<i<n$, $r>-2\fks_{i,i+1}$, $s>0$, uniquely induces an isomorphism of algebras. 
\end{lem}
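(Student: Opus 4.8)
The statement asserts that the assignment $\kappa$ on generators extends to a well-defined algebra homomorphism $\Yt_\mu \to \mathscr{SY}_n^+(\sigma)$ and that it is an isomorphism. The plan is to proceed in three stages: (1) check that $\kappa$ is well-defined, i.e.\ that the images of the generators satisfy the defining relations of $\Yt_\mu$ (Definition~\ref{def1}, for split type A$_{n-1}$); (2) construct a candidate inverse map $\kappa^{-1}:\mathscr{SY}_n^+(\sigma)\to\Yt_\mu$ and check it is well-defined on generators using the presentation of $\mathscr{SY}_n^+(\sigma)$; (3) verify $\kappa$ and $\kappa^{-1}$ are mutually inverse, which is immediate on generators once both are known to be homomorphisms, and conclude surjectivity/injectivity, or alternatively invoke the PBW bases on both sides.

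\textbf{Step 1: well-definedness of $\kappa$.} I would substitute $H_{n-i}^{(r)}\mapsto \ch_i^{(r+2\fks_{i,i+1})}$ and $B_{n-i}^{(s)}\mapsto \sqrt{(-1)^{\fks_{i,i+1}}}\,\cb_i^{(s+\fks_{i,i+1})}$ into each relation \eqref{def}--\eqref{serreN} and match it against \eqref{drs1}--\eqref{drs5}. The shifts $2\fks_{i,i+1}$ on the Cartan generators and $\fks_{i,i+1}$ on the $B$-generators are designed precisely so that the truncation condition \eqref{def} ($H_{n-i}^{(p)}=0$ for $p<-\langle\mu,\alpha_{n-i}\rangle$, $H_{n-i}^{(-\langle\mu,\alpha_{n-i}\rangle)}=1$) matches the convention $\ch_i^{(0)}=1$ and $\ch_i^{(r)}$ generated only for $r>0$ in $\cy_n^+(\sigma)$: here one uses $p_{i+1}-p_i=\langle\mu,\alpha_{n-i}\rangle$, hence $2\fks_{i,i+1}=\langle\mu,\alpha_{n-i}\rangle$. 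The index restriction $\cb_i^{(r)}$ for $r>\fks_{i,i+1}$ matches $B_{n-i}^{(s)}$ for $s>0$. The relations \eqref{hhN}, \eqref{hbN}, \eqref{bbN2} translate directly into \eqref{drs1}, \eqref{drs2}, \eqref{drs4} after shifting indices; note \eqref{hbN} vs.\ \eqref{drs2} differ only by a shift $r\mapsto r+1$ in the left entry, which is harmless. For \eqref{bbN} vs.\ \eqref{drs3}: the sign discrepancy $+2\delta_{ij}(-1)^{s_1}$ vs.\ $-2\delta_{ij}(-1)^r$ is absorbed by the factor $\sqrt{(-1)^{\fks_{i,i+1}}}$ appearing twice (contributing $(-1)^{\fks_{i,i+1}}$) together with the index shift $r=s_1+\fks_{i,i+1}$, $s=s_2+\fks_{i,i+1}$, so that $(-1)^{s_1}$ becomes $(-1)^{r}(-1)^{-\fks_{i,i+1}}=(-1)^r(-1)^{\fks_{i,i+1}}$; the Cartan generator shift $s_1+s_2 \mapsto s_1+s_2+2\fks_{i,i+1}$ matches the shift $2\fks_{i,i+1}$ on $\ch$. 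The Serre relation \eqref{serreN} vs.\ \eqref{drs5} is the most delicate: here one must match $(-1)^{s_1-1}[H_i^{(s_1+s_2)},B_j^{(s-1)}]$ against the infinite-sum right-hand side of \eqref{drs5}; this is exactly the equivalence recorded in the convention after \eqref{SerreIII} (the identity relating $[H_i^{(r)},B_j^{(s-1)}]$ to an infinite sum via \cite[Lem.~4.11]{LWZ25}), combined with tracking the three sign/shift factors $(-1)^{s_1-1}$, $\sqrt{(-1)^{\fks}}^{\,2}$ on the two $\cb_i$'s and $\sqrt{(-1)^{\fks}}$ on the single $\cb_j$, using $c_{i,i+1}=-1$. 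Since $\mathscr{SY}_n^+(\sigma)$ has a presentation by \eqref{drs1}--\eqref{drs5} with $0<i<n$ (cited from \cite[Theorem~5.1]{LWZ25}, restricted to $\sigma$), this verifies $\kappa$ is a well-defined homomorphism.

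\textbf{Steps 2--3: inverse and conclusion.} For the inverse, define $\kappa^{-1}$ on the generators of $\mathscr{SY}_n^+(\sigma)$ by $\ch_i^{(r)}\mapsto H_{n-i}^{(r-2\fks_{i,i+1})}$ (for $r>2\fks_{i,i+1}$; recall $\ch_i^{(r)}=0$ for $r\leq 2\fks_{i,i+1}$ except $\ch_i^{(0)}=1\mapsto H_{n-i}^{(-\langle\mu,\alpha_{n-i}\rangle)}=1$ and $\ch_i^{(2r-1)}=0$ matching $H_{n-i}^{(\text{odd})}=0$) and $\cb_i^{(s)}\mapsto \sqrt{(-1)^{\fks_{i,i+1}}}^{\,-1}B_{n-i}^{(s-\fks_{i,i+1})}$; checking this respects \eqref{drs1}--\eqref{drs5} is the same computation as Step~1 read backwards. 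Then $\kappa\circ\kappa^{-1}$ and $\kappa^{-1}\circ\kappa$ are the identity on generators, hence everywhere, so $\kappa$ is an isomorphism. Alternatively, and perhaps cleaner: $\kappa$ is visibly surjective (its image contains all $\ch_i^{(r)},\cb_i^{(r)}$ generating $\mathscr{SY}_n^+(\sigma)$), and it sends the PBW spanning set \eqref{rvset3} of $\Yt_\mu$ (for $\tau=\id$) bijectively to the analogous PBW set of $\mathscr{SY}_n^+(\sigma)$ up to nonzero scalars — root vectors go to root vectors by the same argument as in the proof of Theorem~\ref{thm:pbw_arb}, using that nested commutators are preserved — so by Theorem~\ref{thm:pbw_arb} the map is injective as well. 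The main obstacle is the careful bookkeeping of signs and index shifts in matching the two forms of the Serre relation (\eqref{serreN} in finite form vs.\ \eqref{drs5} with the infinite $2^{-2p}$-sum), where one must invoke the standard identity from \cite[Lem.~4.11]{LWZ25} recalled in Definition~\ref{def:qsplit}; once that translation is in hand, everything else is routine shift-tracking.
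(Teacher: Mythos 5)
Your ``cleaner'' alternative is essentially the paper's own proof: the paper treats well-definedness and surjectivity as clear and obtains injectivity by comparing the PBW basis of $\Yt_\mu$ (Theorem~\ref{thm:pbw_arb}) with the PBW basis on the twisted Yangian side, cited from \cite[Proposition~3.14]{LWZ25}. Two caveats, though. First, injectivity does not follow ``by Theorem~\ref{thm:pbw_arb}'' alone: that theorem only gives a basis of the source, and you must also know that the ordered monomials in the images are linearly independent inside $\mathscr{SY}_n^+(\sigma)\subset\cy_n^+$, i.e.\ you need the PBW statement for the target, which is exactly what the paper cites from \cite{LWZ25}; this citation is missing from your argument. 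Relatedly, your primary route via an explicit inverse $\kappa^{-1}$ is not available as described: $\mathscr{SY}_n^+(\sigma)$ is defined in the paper as a subalgebra of $\cy_n^+$, not by the presentation \eqref{drs1}--\eqref{drs5}, so ``checking $\kappa^{-1}$ on generators using the presentation of $\mathscr{SY}_n^+(\sigma)$'' presupposes a presentation whose existence is essentially what is being established; drop that route in favour of the surjectivity-plus-PBW argument.

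Second, the sign bookkeeping in your Step 1 is asserted rather than carried out, and as stated it does not close. Writing $B_{n-i}^{(s)}\mapsto c\,\cb_i^{(s+\fks_{i,i+1})}$ with $c^2=(-1)^{\fks_{i,i+1}}$ and $r=s_1+\fks_{i,i+1}$, $s=s_2+\fks_{i,i+1}$, the image of the left side of \eqref{bbN} (for $i=j$) equals, by \eqref{drs3}, $c^2\bigl(\tfrac{c_{ii}}{2}[\cb_i^{(r)},\cb_i^{(s)}]_+-2(-1)^{r}\ch_i^{(r+s)}\bigr)$, whereas the image of the right side is $c^2\tfrac{c_{ii}}{2}[\cb_i^{(r)},\cb_i^{(s)}]_+ +2(-1)^{s_1}\ch_i^{(r+s)}$; matching the $\ch$-terms forces $c^2=(-1)^{\fks_{i,i+1}+1}$, not $(-1)^{\fks_{i,i+1}}$. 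So the claimed absorption of the $+2\delta_{ij}(-1)^{s_1}$ versus $-2\delta_{ij}(-1)^r$ discrepancy is not the one-line parity shuffle you describe: one has to align carefully with the exact conventions of \cite{LWZ25} (note also that \eqref{drs2} as displayed, combined with $\ch_i^{(2r-1)}=0$, degenerates for even $r$, a sign that the transcription must be read with the appropriate parities), and this verification is a genuine computation, not routine shift-tracking. Since the paper's proof declares this step ``clear'' and relies on it for surjectivity, your write-up should either perform this check honestly or explain the convention that reconciles the constants.
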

\begin{proof}
It is clear that the map induces an epimorphism. Then it follows from comparing the PBW bases in Theorem \ref{thm:pbw_arb} and \cite[Proposition~3.14]{LWZ25} that it is indeed an isomorphism. 
\end{proof}


Then we recall the truncation from \cite[\S11.2]{LPTTW25} in terms of the Drinfeld presentation rather than the parabolic presentation, thanks to \cite[Proposition~11.5]{LPTTW25}. In this case, the composition $\nu$ is simply $(1,\dots,1)$, the element  $H_{1;1,1}^{(r)}$ therein corresponds to $\ch_{0}^{(r)}$ for $r>0$ and $B_{1;1,1}^{(s)}$ corresponds to $\cb_{1}^{(s)}$ for $s>\fks_{1,2}$. 
Introduce elements $\wtl\cb_{i}^{(r+\fks_{1,2})}$ for $r>0$ by the rule
\beq\label{b-def2}
\cb_1(u)=\sum_{r> 0}\cb_{1}^{(r+\fks_{1,2})}u^{-r},
\qquad 
\wtl \cb_1(u):= \cb_1(u+\tfrac12)\ch_0(u)= \sum_{r>0}\wtl\cb_{i}^{(r+\fks_{1,2})}u^{-r}.
\eeq
Thus the element $\wtl B_{1;1,1}^{(\fks_{1,2};p_1+1)}$ corresponds to $\wtl\cb_{1}^{(\fks_{1,2}+p_1+1)}$. 
Then the truncated shifted twisted Yangian (TSTY) $\cy_{n,\ell}^+(\sigma)$ in \cite{LPTTW25} is defined to be the quotient
\beq\label{W-TSTY}
\cy_{n,\ell}^+(\sigma):=\cy_{n}^+(\sigma)/\mathscr I_{\ell},
\eeq
where $\mathscr I_\ell$ is the 2-sided ideal of $\cy_{n}^+(\sigma)$ generated by
\beq\label{W-ideal}
\{\ch_{0}^{(r)}\mid r>p_1\}\cup \{\delta_{\bar p_1,\bar 0}\wtl \cb_{1}^{(\fks_{1,2}+p_1+1)}\}.
\eeq
It has been proved that in \cite[Theorem F]{LPTTW25} that $\cy_{n}^+(\sigma)$ are isomorphic to finite W-algebra of classical types. Let $k=\lfloor\tfrac{N}{2}\rfloor$. The precise type is determined as follows:
\begin{align}  \label{BCDk}
    \begin{cases}
        \mathrm{C}_k, & \text{ if all $p_i$ are even},
        \\
        \mathrm{B}_k, & \text{ if all $p_i$ and $n$ are odd},
        \\
        \mathrm{D}_k, & \text{ if all $p_i$ are odd and $n$ are even}.
    \end{cases}
\end{align}

We shall slightly modify the iGKLO representations and establish the connections between the TSTY $\Yt_\mu^{N\varpi_1^\vee}$ and $\cy_n^+(\sigma)$.

Under the choice of the data, we have $\lambda=N\varpi_1^\vee$ and hence $\bw_1=N$ and $\bw_i=0$ for $i>1$. Thus by the conventions from \S3.1--\S3.2, only for $i=1$, the parameters $\fkw_i,\varsigma_i$ and the polynomials $\Z_i(u)$ are nontrivial. For that reason we shall drop the index $i=1$. Let $\varsigma=N-2k$. We have
\beq\label{Zdfn}
\Z(u):=\Z_1(u)=u^\varsigma\prod_{s=1}^{k}(u-z_s^2),\qquad \Z_i(u)=1,
\eeq
for $i>1$. 

Recall that all $p_i$ have the same parity, which we denote by $\theta$. It is easy to see from \eqref{ell_theta}--\eqref{vartheta} that $\vartheta_i=\theta$ for all $i\in \I$. Hence the action of $H_i(u)$ in the iGKLO representation $\Phi_\mu^{N\varpi_1^\vee}$ is given as follows
\[
H_i(u)\mapsto \frac{\bm\varkappa(u)^{\theta}\Z_i(u)}{\W_{i}(u-\tfrac{1}2)\W_{i}(u+\tfrac{1}2)}\prod_{j\leftrightarrow i}\W_j(u),\qquad 1\lle i<n.
\]

We introduce some variant of the algebra $\Yt_\mu[\bm z]$:
\begin{equation*}
\Yt_\mu[\bm z^2]^{S_k}:=\Yt_\mu\otimes \bC[z_1^2,\dots,z_k^2]^{S_k},\qquad 
\wY_\mu^{N\varpi_1^\vee}:= \Phi_\mu^{N\varpi_1^\vee}\big(\Yt_\mu[\bm z^2]^{S_k}\big),
\end{equation*}
where $\bC[z_1^2,\dots,z_k^2]^{S_k}$ is the algebra of symmetric polynomials in $z_1^2,\dots,z_k^2$.

Introduce formal variables $\mathsf Z^{(r)}$ for $r>0$ and consider the tensor product of algebras
\beq\label{extension1}
\Yt_\mu[\bm\sfZ]:=\Yt_\mu\otimes \bC[\mathsf Z^{(r)}\mid r>0].
\eeq
Set
\beq\label{extension2}
\bm\sfZ(u)=1+\sum_{r>0}\sfZ^{(r)}u^{-2r}.
\eeq
Now we have obtained four algebras $\cy_n^+(\sigma)$, $\Yt_\mu[\bm\sfZ]$, $\Yt_\mu[\bm z^2]^{S_k}$, $\wY_\mu^{N\varpi_1}$. Since both $\Yt_\mu[\bm\sfZ]$ and $\Yt_\mu[\bm z^2]^{S_k}$ are extensions of $\Yt_\mu$ by tensoring with a polynomial algebra, it is easy to see we have a surjective homomorphism $\Yt_\mu[\bm\sfZ]\twoheadrightarrow \Yt_\mu[\bm z^2]^{S_k}$ via specializing $\bm\sfZ(u)$ to $u^{-N}\Z(u)$, where $\Z(u)$ is given in \eqref{Zdfn}. 

Introduce elements $\mathsf{A}_i^{(r)}$ and $\mathsf{B}_i^{(r)}$ in $\Yt_\mu[\bm z^2]^{S_k}$ for $r>0$, respectively, by \eqref{GKLO-A} and \eqref{GKLO-B}. Also, introduce $\mathsf{A}_i^{(r)}$ (we use the same notation as they are canonically identified under the quotient map $\Yt_\mu[\bm\sfZ]\twoheadrightarrow \Yt_\mu[\bm z^2]^{S_k}$) in $\Yt_\mu[\bm\sfZ]$ for $r>0$ by
\beq\label{GKLO-A2}
H_i(u)=\frac{\bm\varkappa(u)^{\theta}(u^N\bm\sfZ(u))^{\delta_{1,i}}\prod_{j\leftrightarrow i}u^{\bv_j}}{(u^2-\frac14)^{\bv_i}}\frac{\prod_{j\leftrightarrow i} \mathsf A_j(u)}{\mathsf A_i(u-\frac12)\mathsf A_i(u+\frac12)},
\eeq
where $\mathsf A_i^{(r)}$ are the coefficients of $\mathsf A_i(u)$ as in \eqref{A-coeff}. Recall the numbers $\bv_i$ from \eqref{ell}. Define a family of new numbers $q_i$ for $0\lle i<n$ by
\beq\label{q-def}
q_i:=(\bv_i-\theta)+2\sum_{j=1}^{n-i-1}(-1)^j(\bv_{i+j}-\theta),
\eeq
where by convention $\bv_0=N$. If $\theta=0$, then all $\theta_i=0$. If $\theta=1$, then $\theta_{i}$ is equal to the parity of $n-i$.
\begin{lem}\label{lem:iso-ext}
There exists a unique isomorphism  $\tilde\kappa: \Yt_\mu[\bm\sfZ] \rightarrow\cy_n^+(\sigma)$ which extends $\kappa$ from Lemma \ref{lem:kappa} by letting
\[
\bm\sfZ(u)\mapsto \mathscr Q_n(u)\prod_{i=1}^{n-1}\big(1-(\tfrac{i}{2u})^2\big)^{q_i}.
\]
Moreover, under the isomorphism $\tilde\kappa$, we have
\[
\mathsf A_{n-i}(u)\mapsto \mathscr Q_i(u)\prod_{j=1}^{i-1}\big(1-(\tfrac{i-j}{2u})^2\big)^{q_{n-j}},\qquad 1\lle i<n.
\]
\end{lem}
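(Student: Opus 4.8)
The plan is to promote the isomorphism $\kappa$ of Lemma~\ref{lem:kappa} through the polynomial extension and then check compatibility with the Cartan series. First I would observe that $\Yt_\mu[\bm\sfZ]=\Yt_\mu\otimes\bC[\sfZ^{(r)}\mid r>0]$ is a polynomial extension of $\Yt_\mu$, while on the target side the decomposition \eqref{sigma-dec} gives $\cy_n^+(\sigma)=\mathscr{SY}_n^+(\sigma)\otimes\bC[\mathscr Z^{(r)}\mid r>0]$, a polynomial extension of $\mathscr{SY}_n^+(\sigma)$ by the central subalgebra generated by the coefficients $\mathscr Z^{(r)}$ of $\mathscr Q_n(u)$ (cf.\ \eqref{Q-expand}). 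Since $\kappa:\Yt_\mu\xrightarrow{\ \sim\ }\mathscr{SY}_n^+(\sigma)$ is already an isomorphism, it suffices to extend it by sending the free polynomial generators $\sfZ^{(r)}$ to elements of $\cy_n^+(\sigma)$ that together with $\kappa(\Yt_\mu)$ still freely generate the full algebra. The proposed assignment is $\bm\sfZ(u)\mapsto \mathscr Q_n(u)\prod_{i=1}^{n-1}\bigl(1-(\tfrac{i}{2u})^2\bigr)^{q_i}$; since the correction factor $\prod_{i=1}^{n-1}(1-(\tfrac{i}{2u})^2)^{q_i}$ is a fixed invertible power series in $u^{-2}$ with \emph{scalar} coefficients, the coefficients of the right-hand side differ from the $\mathscr Z^{(r)}$ only by a unipotent triangular scalar change of variables. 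Hence they still freely generate $\mathscr{ZY}_n^+$ over $\bC$, and the extended map $\tilde\kappa$ is a well-defined algebra homomorphism which is bijective because both factors of the respective tensor decompositions are matched isomorphically. One must also check $\tilde\kappa$ is the \emph{unique} extension of $\kappa$ with the stated value on $\bm\sfZ(u)$, which is immediate since $\Yt_\mu$ and $\bm\sfZ(u)$ together generate $\Yt_\mu[\bm\sfZ]$.

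The second assertion, $\mathsf A_{n-i}(u)\mapsto \mathscr Q_i(u)\prod_{j=1}^{i-1}\bigl(1-(\tfrac{i-j}{2u})^2\bigr)^{q_{n-j}}$, is where the real computation lies, and I would prove it by downward induction on $i$ (equivalently: induction on $n-i$, starting from $i=n$ where $\mathsf A_0(u)=1$ by the conventions $\fkv_0=0$, matching $\mathscr Q_0(u):=1$ and the empty product). The key input is the defining relation \eqref{GKLO-A2} for the $\mathsf A_i(u)$ in $\Yt_\mu[\bm\sfZ]$, rewritten in the type A form
\[
H_i(u)=\frac{\bm\varkappa(u)^{\theta}(u^N\bm\sfZ(u))^{\delta_{1,i}}\,u^{\bv_{i-1}}u^{\bv_{i+1}}}{(u^2-\tfrac14)^{\bv_i}}\cdot\frac{\mathsf A_{i-1}(u)\,\mathsf A_{i+1}(u)}{\mathsf A_i(u-\tfrac12)\mathsf A_i(u+\tfrac12)},
\]
together with the known formula \eqref{H-in-Q} for $\ch_i(u)$ in terms of the $\mathscr Q_j$'s, namely $\ch_i(u)=\mathscr Q_{i-1}(u)\mathscr Q_{i+1}(u)/(\mathscr Q_i(u-\tfrac12)\mathscr Q_i(u+\tfrac12))$, and $\ch_0(u)=\mathscr Q_1(u)$. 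Applying $\tilde\kappa$ to $H_{n-i}(u)$ and using $\kappa(H_{n-i}^{(r)})=\ch_i^{(r+2\fks_{i,i+1})}$ (so that $\tilde\kappa(H_{n-i}(u))$ is a shift/rescaling of $\ch_i(u-\tfrac{\fks_{i,i+1}}{?})$ — I would track the precise spectral shift through \eqref{Bi:shiftEi}, \eqref{ddgl}--\eqref{Bi:shiftEi} and the relation $p_{i+1}-p_i=\langle\mu,\alpha_{n-i}\rangle$), I get a functional equation for the image series $\tilde\kappa(\mathsf A_{n-i}(u))$. Writing $\tilde\kappa(\mathsf A_{n-i}(u))=\mathscr Q_i(u)\,g_i(u)$ for an unknown scalar series $g_i(u)$, the inductive hypothesis supplies $g_{i-1}$ and $g_{i+1}$, and the functional equation reduces to an \emph{elementary identity among scalar rational functions} determining $g_i(u)=\prod_{j=1}^{i-1}(1-(\tfrac{i-j}{2u})^2)^{q_{n-j}}$; here the exponents $q_i$ defined in \eqref{q-def} are exactly engineered (via the alternating-sign recursion $q_i=(\bv_i-\theta)+2\sum_j(-1)^j(\bv_{i+j}-\theta)$) so that the telescoping of the $\bm\varkappa(u)^\theta$, the $u$-powers $u^{\bv_{i\pm1}}/(u^2-\tfrac14)^{\bv_i}$, and the half-integer shifts $u\mapsto u\pm\tfrac12$ in the denominator produces precisely these correction factors. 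For $i=n-1$ one additionally absorbs the extra factor $u^N\bm\sfZ(u)$ coming from $\delta_{1,i}$, which is consistent with the base case $\bm\sfZ(u)\mapsto\mathscr Q_n(u)\prod_{i=1}^{n-1}(1-(\tfrac{i}{2u})^2)^{q_i}$: plugging the claimed formulas for $\mathsf A_{n-i}(u)$ into the defining equation \eqref{GKLO-A2} at $i=1$ must reproduce exactly this image, and I would verify this as a consistency check that also re-derives the stated value of $\tilde\kappa(\bm\sfZ(u))$ rather than positing it.

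The main obstacle, and the part I expect to consume most of the work, is bookkeeping the spectral shifts and parity corrections consistently. There are three sources of half-integer and integer shifts that must be reconciled: (i) the shift $B_i(u)=E_i(u+\tfrac i2)$, $H_i(u)=\wtl D_i(u+\tfrac i2)D_{i+1}(u+\tfrac i2)$ from \eqref{Bi:shiftEi} built into the $\gl_n$ presentation; (ii) the shift-matrix conjugation $H_{n-i}^{(r)}\mapsto\ch_i^{(r+2\fks_{i,i+1})}$, $B_{n-i}^{(s)}\mapsto\sqrt{(-1)^{\fks_{i,i+1}}}\,\cb_i^{(s+\fks_{i,i+1})}$ from Lemma~\ref{lem:kappa}, which is a spectral translation by $\fks_{i,i+1}=\tfrac12(p_{i+1}-p_i)$; and (iii) the half-integer shifts $u\mapsto u\pm\tfrac12$ appearing in the Cartan relations themselves. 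Keeping the $\theta\in\{0,1\}$ parity cases separate is essential: when $\theta=1$ the factors $\theta_i$ equal the parity of $n-i$ and $\bm\varkappa(u)^{\theta_i}$ contributes genuine $(1-\tfrac1{4u^2})$ factors that feed into the $q_i$; when $\theta=0$ all $\theta_i=0$ and these drop out. I would handle this by first doing the computation cleanly in the $\theta=0$ case (where the scalar identity is a pure telescoping of $u$-powers and half-shifts), then redoing it with the $\bm\varkappa$ factors present and confirming the recursion \eqref{q-def} absorbs them; the uniqueness statement and the base case then pin down $\tilde\kappa(\bm\sfZ(u))$ with no further choices.
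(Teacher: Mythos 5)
Your proposal is correct and follows essentially the same route as the paper: the first statement is the tensor-decomposition argument via \eqref{sigma-dec} together with the invertible scalar correction to $\mathscr Q_n(u)$, and the second statement comes down to checking that the claimed images satisfy the defining relation \eqref{GKLO-A2} against $\kappa(H_{n-i}(u))=u^{\langle\mu,\alpha_{n-i}\rangle}\ch_i(u)$ from \eqref{H-in-Q}, which reduces (exactly as in the paper) to the scalar identity $\big(1-(\tfrac{1}{2u})^2\big)^{\theta-\bv_i}\,\mathfrak P_{n-i-1}(u)\mathfrak P_{n-i+1}(u)\big/\big(\mathfrak P_{n-i}(u-\hf)\mathfrak P_{n-i}(u+\hf)\big)=1$, where $\mathfrak P_i(u)=\prod_{j=1}^{i-1}\big(1-(\tfrac{i-j}{2u})^2\big)^{q_{n-j}}$, verified directly from \eqref{q-def}. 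Two small points: no induction is needed (and, as literally phrased, an induction whose hypothesis supplies both $g_{i-1}$ and $g_{i+1}$ is not well-founded) --- since the candidate images are explicit and \eqref{GKLO-A2} determines the series $\mathsf A_i(u)$ uniquely, one simply verifies each equation; and no spectral-parameter shift has to be tracked, because $\kappa$ only shifts indices, i.e., multiplies the series by $u^{2\fks_{i,i+1}}=u^{\langle\mu,\alpha_{n-i}\rangle}$, so the bookkeeping you anticipate as the main obstacle does not arise.
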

\begin{proof}
The first statement follows from Lemma \ref{lem:kappa}, \eqref{Q-expand}, \eqref{sigma-dec}, and \eqref{extension1}--\eqref{extension2}.

Now we prove the second statement. Denote
\beq\label{fkQ}
\mathfrak P_i(u):=\prod_{j=1}^{i-1}\big(1-(\tfrac{i-j}{2u})^2\big)^{q_{n-j}},\qquad 1\lle i\lle n.
\eeq
Recall the definition of $\mathsf A_i(u)$ from \eqref{GKLO-A2} and the relation 
\[
\kappa(H_{i}(u))=u^{\langle\mu,\alpha_i\rangle}\ch_{n-i}(u)=u^{\langle\mu,\alpha_i\rangle}\frac{\mathscr Q_{n-i-1}(u) \mathscr Q_{n-i+1}(u)}{\mathscr Q_{n-i}(u-\hf)\mathscr Q_{n-i}(u+\hf)}
\]
from \eqref{H-in-Q}. Thus it suffices to verify that
\[
\big(1-(\tfrac{1}{2u})^2\big)^{\theta-\bv_i}\frac{\mathfrak P_{n-i-1}(u)\mathfrak P_{n-i+1}(u)}{\mathfrak P_{n-i}(u-\hf)\mathfrak P_{n-i}(u+\hf)}=1,
\]
which follows from a straightforward calculation using \eqref{q-def} and \eqref{fkQ}.
\end{proof}

\begin{rem}
If all $p_i$ are even, then $\theta_i=0$ and the polynomial
\beq\label{P-W}
\mathscr P_n(u):=u^{q_0} \prod_{j=1}^{n-1} \big(u^2-(\tfrac{i}{2})^2\big)^{q_i}
\eeq
coincides with the polynomial in \cite[Proposition~12.5]{LPTTW25} for $\ell$ even (associated to the symmetric pyramid determined by the partition $p_1\lle p_2\lle\cdots\lle p_n$). 

Similarly, if all $p_i$ are odd, then the polynomial $(u+\hf)\mathscr P_n(u)$ coincides with the polynomial in \cite[Proposition~12.5]{LPTTW25} for $\ell$ odd. Note that it is not hard to prove that the extra factor $u+\rho_0=u+\hf$ there (for $\ell$ odd) can be removed so that the RHS of \cite[Proposition~12.5]{LPTTW25} remains to be a polynomial in $u$.
\end{rem}

\subsection{Identifying two TSTY's}

We introduced a variant of the TSTY $\Yt_\mu^{N\varpi_1^\vee}$, by restricting the allowed elements coming from $\bC[\bm z]$:
\begin{equation}
\label{def:tildeTSTY}
\wY_\mu^{N\varpi_1^\vee}:= \Phi_\mu^{N\varpi_1^\vee}\big(\Yt_\mu[\bm z^2]^{S_k}\big).
\end{equation}

\begin{prop}\label{prop:surjective}
There exists an epimorphism $\cy_{n,\ell}^+(\sigma) \twoheadrightarrow \wY_{\mu}^{N\varpi_1^\vee}$.
\end{prop}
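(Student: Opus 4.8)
The plan is to realize the desired map as a composite of homomorphisms already at hand, and then to check it annihilates the truncation ideal. First I would form the composite
\[
\Psi:\ \cy_n^+(\sigma)\ \xrightarrow{\ \tilde\kappa^{-1}\ }\ \Yt_\mu[\bm\sfZ]\ \twoheadrightarrow\ \Yt_\mu[\bm z^2]^{S_k}\ \xrightarrow{\ \Phi_\mu^{N\varpi_1^\vee}\ }\ \wY_\mu^{N\varpi_1^\vee},
\]
where the first map is the inverse of the isomorphism $\tilde\kappa$ of Lemma~\ref{lem:iso-ext}, the middle map is the surjection obtained by specializing $\bm\sfZ(u)$ to $u^{-N}\Z(u)$ (with $\Z(u)$ as in \eqref{Zdfn}), and the last map is the iGKLO homomorphism, which by \eqref{def:tildeTSTY} surjects onto $\wY_\mu^{N\varpi_1^\vee}$. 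Hence $\Psi$ is an epimorphism, and since it is an algebra homomorphism it will descend along $\cy_n^+(\sigma)\twoheadrightarrow\cy_{n,\ell}^+(\sigma)=\cy_n^+(\sigma)/\mathscr I_\ell$ as soon as $\Psi$ annihilates the generators of $\mathscr I_\ell$ listed in \eqref{W-ideal}: the elements $\ch_0^{(r)}$ for $r>p_1$ and, when $p_1$ is even, the single element $\wtl\cb_1^{(\fks_{1,2}+p_1+1)}$.

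For the generators $\ch_0^{(r)}$ I would argue as follows. Taking $i=1$ in Lemma~\ref{lem:iso-ext} and noting that the correction factor $\mathfrak P_1(u)$ of \eqref{fkQ} is the empty product $1$, combined with \eqref{H-in-Q}, gives $\tilde\kappa\big(\mathsf A_{n-1}(u)\big)=\mathscr Q_1(u)=\ch_0(u)$, so $\tilde\kappa^{-1}(\ch_0^{(r)})=\mathsf A_{n-1}^{(r)}$. Under the middle quotient map this element is canonically identified with $\mathsf A_{n-1}^{(r)}\in\Yt_\mu[\bm z^2]^{S_k}$, and Lemma~\ref{lem:AiGKLO} gives $\Phi_\mu^{N\varpi_1^\vee}\big(\mathsf A_{n-1}(u)\big)=u^{-\bv_{n-1}}\W_{n-1}(u)$, a polynomial in $u^{-1}$ with no terms of degree exceeding $\bv_{n-1}$. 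Since $p_1=\bv_{n-1}$ by \eqref{eq:pi_vi}, this forces $\Psi(\ch_0^{(r)})=0$ for every $r>p_1$.

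For the remaining generator I would only need the case $p_1$ even, in which case $\theta_{n-1}=\delta_{\overline{\bv}_{n-1},\bar 1}=0$. Unwinding the definition \eqref{b-def2} of $\wtl\cb_1(u)$, Lemma~\ref{lem:kappa} (which gives $\tilde\kappa^{-1}(\cb_1(u))=c\,B_{n-1}(u)$ for a nonzero scalar $c$), and the identity $\tilde\kappa^{-1}(\ch_0(u))=\mathsf A_{n-1}(u)$ from the previous step, one obtains $\tilde\kappa^{-1}\big(\wtl\cb_1(u)\big)=c\,B_{n-1}(u+\tfrac12)\,\mathsf A_{n-1}(u)$. Because $\theta_{n-1}=0$, the defining formula \eqref{GKLO-B} reads $\mathsf B_{n-1}(u)=u^{\bv_{n-1}}B_{n-1}(u+\tfrac12)\mathsf A_{n-1}(u)$, so comparing coefficients of $u^{-r}$ gives $\tilde\kappa^{-1}\big(\wtl\cb_1^{(\fks_{1,2}+r)}\big)=c\,\mathsf B_{n-1}^{(r)}$ for all $r>0$. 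Taking $r=p_1+1>\bv_{n-1}$ and invoking the fact recorded just after \eqref{GKLO-B} that $\mathsf B_i^{(r)}\in\ker\Phi_\mu^{N\varpi_1^\vee}$ whenever $r>\bv_i$, we conclude $\Psi\big(\wtl\cb_1^{(\fks_{1,2}+p_1+1)}\big)=0$. Therefore $\Psi$ descends to the desired epimorphism $\cy_{n,\ell}^+(\sigma)\twoheadrightarrow\wY_\mu^{N\varpi_1^\vee}$.

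The step I expect to require the most care is the last one: one has to pin down the precise dictionary between the truncation data $(\fks_{1,2},p_1)$ of \cite{LPTTW25} and the iGKLO data $(\bv_{n-1},\theta_{n-1})$, and in particular verify that the series $\mathsf B_{n-1}(u)$ defined by \eqref{GKLO-B} is exactly the one matching $\wtl\cb_1(u)=\cb_1(u+\tfrac12)\ch_0(u)$ under $\tilde\kappa$. The evenness of $p_1$ is used precisely to kill the factor $(u+\tfrac12)^{\theta_{n-1}}$ in \eqref{GKLO-B}; this is the structural reason that the generator $\wtl\cb_1^{(\fks_{1,2}+p_1+1)}$ enters \eqref{W-ideal} with the coefficient $\delta_{\bar p_1,\bar 0}$, and in the odd case that generator is simply absent, so only the $\ch_0^{(r)}$ need to be treated. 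Beyond this bookkeeping I do not anticipate any genuinely new computation, everything being supplied by Lemmas~\ref{lem:kappa}, \ref{lem:iso-ext}, \ref{lem:AiGKLO} and the kernel remark.
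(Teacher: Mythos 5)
Your proposal is correct and follows essentially the same route as the paper: compose the isomorphism of Lemma~\ref{lem:iso-ext} (inverted), the specialization $\bm\sfZ(u)\mapsto u^{-N}\Z(u)$, and $\Phi_\mu^{N\varpi_1^\vee}$, then check the ideal \eqref{W-ideal} dies because $\ch_0(u)$ corresponds to $\mathsf A_{n-1}(u)$ and, when $p_1=\bv_{n-1}$ is even, $\wtl\cb_1^{(\fks_{1,2}+p_1+1)}$ corresponds (up to a nonzero scalar) to $\mathsf B_{n-1}^{(\bv_{n-1}+1)}\in\ker\Phi_\mu^{N\varpi_1^\vee}$. Your write-up merely spells out the coefficient bookkeeping (via Lemma~\ref{lem:AiGKLO} and \eqref{GKLO-B}--\eqref{b-def2}) that the paper cites implicitly.
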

\begin{proof}
By the discussion above, the following composition,
\[
\cy_n^+(\sigma) \stackrel{\sim}{\rightarrow} \Yt_\mu[\bm\sfZ]\twoheadrightarrow \Yt_\mu[\bm z^2]^{S_k} \twoheadrightarrow \wY_{\mu}^{N\varpi_1^\vee},
\]
is an epimorphism, where the first isomorphism is from Lemma \ref{lem:iso-ext}, the second epimorphism is obtained by specializing $\bm{\mathsf Z}(u)$ to $u^{-N}\Z(u)$, and the last one by definition. 

Note that the two-sided ideal generated by $\mathsf A_i^{(r)}$, $\delta_{\bar{\bv}_i, \bar 0}\mathsf B_{i}^{(r)}$ for  $i \in \I$, $r > \bv_i$ is contained in the kernel of the quotient $\Yt_\mu[\bm z^2]^{S_k} \twoheadrightarrow \wY_{\mu}^{N\varpi_1^\vee}$. By Lemma \ref{lem:iso-ext}, we find that $\ch_0(u)=\mathscr Q_1(u)$ in $\cy_n^+(\sigma)$ is sent to $\mathsf A_{n-1}(u)$ in $\Yt_\mu[\bm z^2]^{S_k}$. If $\bv_{n-1}$ is even, then it follows from Lemma \ref{lem:kappa}, \eqref{GKLO-B}, and \eqref{b-def2} that $\wtl{\mathscr{B}}_{1}^{(\fks_{1,2}+p_1+1)}$ in $\cy_n^+(\sigma)$  is sent to
$\mathsf B_{n-1}^{(p_1+1)}=\mathsf B_{n-1}^{(\bv_{n-1}+1)}$ in $\Yt_\mu[\bm z^2]^{S_k}$. Thus it follows from \eqref{W-TSTY}-\eqref{W-ideal} that the ideal $\mathscr I_\ell$ is sent to zero. Thus we have an epimorphism $\cy_{n,\ell}^+(\sigma)\twoheadrightarrow \wY_{\mu}^{N\varpi_1^\vee}$.
\end{proof}

Recall from \cite[Corollary 11.11]{LPTTW25}, that $\cy_{n,\ell}^+(\sigma)$ quantizes the Slodowy slice $\mc S^\epsilon_\pi$ for classical Lie algebra $\fksl_N^\epsilon$ in \cite[\S 6.1]{LWW25islice}, where $\pi$ is the partition $(p_n,\ldots, p_1)$ of $N$; cf. \eqref{eq:pi_vi}.  

\begin{lem}  \label{lem:dim}
    The number of PBW generators for $\cy_{n,\ell}^+(\sigma)$ is equal to $\dim \mc S^\epsilon_\pi$, which is given by
    $ 2\sum_{i=1}^{n-1} \fkv_i +k.$
\end{lem}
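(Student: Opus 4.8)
The plan is to establish the two asserted equalities separately. For the first, that the number of PBW generators of $\cy_{n,\ell}^+(\sigma)$ equals $\dim\mc S^\epsilon_\pi$, I would argue as follows. The existence of a PBW basis of $\cy_{n,\ell}^+(\sigma)$ — ordered monomials in a finite set of truncated root vectors and Cartan-type elements — forces its Kazhdan (loop) associated graded to be a polynomial ring in (number of PBW generators) variables: the symbols of the PBW generators span $\gr\cy_{n,\ell}^+(\sigma)$, and since ordered monomials in them form a basis they are algebraically independent, so $\gr\cy_{n,\ell}^+(\sigma)$ is free polynomial on them. On the other hand, by \cite[Theorem F, Corollary~11.11]{LPTTW25} (see also \cite[\S6.1]{LWW25islice}) $\cy_{n,\ell}^+(\sigma)$ quantizes the Slodowy slice $\mc S^\epsilon_\pi = e + (\fksl_N^\epsilon)^f$, an affine space, so $\gr\cy_{n,\ell}^+(\sigma)\cong\bC[\mc S^\epsilon_\pi]$ is polynomial of Krull dimension $\dim\mc S^\epsilon_\pi=\dim(\fksl_N^\epsilon)^e$. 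Comparing, the number of PBW generators equals $\dim\mc S^\epsilon_\pi$. (Alternatively one could simply count the truncated generators directly from the Drinfeld/parabolic presentation of \cite{LPTTW25}.)

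For the second equality, $\dim\mc S^\epsilon_\pi = 2\sum_{i=1}^{n-1}\fkv_i+k$, I would compute $\dim(\fksl_N^\epsilon)^e$ via the classical centralizer-dimension formula for nilpotents in $\fksp_N$ and $\so_N$ (Springer--Steinberg; see e.g.\ Collingwood--McGovern): for $e$ of Jordan type $\pi=(p_n\ge\cdots\ge p_1)$ with transpose partition $\pi^{\mathrm t}$, one has $\dim(\fksl_N^\epsilon)^e=\tfrac12\bigl(\sum_{j\ge1}(\pi^{\mathrm t}_j)^2+(-1)^\theta\,\#\{i:p_i\text{ odd}\}\bigr)$, where $\theta\in\{0,1\}$ is the common parity of the $p_i$ — so $(-1)^\theta=+1$ gives the $\fksp_N$ (type $\mathrm C_k$) case and $(-1)^\theta=-1$ the $\so_N$ (types $\mathrm B_k,\mathrm D_k$) case of \eqref{BCDk}. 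Setting $\bv_0:=N$ and $\bv_n:=0$, the recursion \eqref{eq:pi_vi} reads $\pi_m=p_{n+1-m}=\bv_{m-1}-\bv_m$ for $1\le m\le n$. Then the identity $\sum_j(\pi^{\mathrm t}_j)^2=\sum_m(2m-1)\pi_m$ together with an Abel summation yields $\sum_j(\pi^{\mathrm t}_j)^2=N+2\sum_{i=1}^{n-1}\bv_i$, while $\#\{i:p_i\text{ odd}\}=n\theta$, giving $\dim\mc S^\epsilon_\pi=\tfrac12\bigl(N+2\sum_{i=1}^{n-1}\bv_i+(-1)^\theta n\theta\bigr)$.

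The remaining step — matching this against $2\sum_{i=1}^{n-1}\fkv_i+k$ — is the delicate point, and I expect the main obstacle to be exactly this parity bookkeeping rather than anything conceptual: one must check that the three cases of \eqref{BCDk} collapse to one uniform formula. If $\theta=0$, the recursion forces every $\bv_i$ even, so $\fkv_i=\tfrac12\bv_i$, $N$ is even, $k=\tfrac N2$, and both sides equal $\sum_{i=1}^{n-1}\bv_i+\tfrac N2$ (type $\mathrm C_k$). If $\theta=1$, the recursion shows $\bv_i$ is odd precisely when $n-i$ is odd, so $2\fkv_i=\bv_i-\theta_i$ and $\#\{1\le i\le n-1:\theta_i=1\}=\lfloor n/2\rfloor$, whence $2\sum_{i=1}^{n-1}\fkv_i+k=\sum_{i=1}^{n-1}\bv_i-\lfloor n/2\rfloor+k$; since $N\equiv n\pmod 2$ and $k=\lfloor N/2\rfloor$ we get $k-\lfloor n/2\rfloor=\tfrac12(N-n)$, so both sides equal $\sum_{i=1}^{n-1}\bv_i+\tfrac12(N-n)$, covering types $\mathrm B_k$ and $\mathrm D_k$ at once. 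This matches $\dim\mc S^\epsilon_\pi$ in each case and completes the argument.
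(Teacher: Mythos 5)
Your proposal is correct, but it reaches the formula by a different route than the paper. For the first equality both arguments ultimately rest on \cite[Corollary~11.11]{LPTTW25} (the identification $\gr\,\cy_{n,\ell}^+(\sigma)\cong\bC[\mc S^\epsilon_\pi]$); the paper simply quotes that corollary, while you add the (valid, if slightly informal) observation that a PBW basis makes the associated graded a polynomial ring on the symbols, so the generator count equals the Krull dimension. The real divergence is in the second equality: the paper never touches the Slodowy slice directly, but instead counts the PBW generators explicitly via \cite[Corollary~11.9]{LPTTW25} with admissible shape $(1,\dots,1)$, obtaining $\sum_{a=1}^{n-1}(n-a)p_a+\sum_{a=1}^n\lfloor p_a/2\rfloor$, and then uses $\sum_{a=1}^b p_a=\bv_{n-b}$ to rewrite this as $2\sum_i\fkv_i+\sum_i\theta_i+\sum_a\lfloor p_a/2\rfloor$ and checks $\sum_i\theta_i+\sum_a\lfloor p_a/2\rfloor=k$ case by case. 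You instead compute $\dim\mc S^\epsilon_\pi=\dim(\fksl_N^\epsilon)^e$ from the Springer--Steinberg centralizer formula for nilpotents in $\fksp_N$/$\so_N$, convert via $\pi_m=\bv_{m-1}-\bv_m$ and the identity $\sum_j(\pi^{\mathrm t}_j)^2=\sum_m(2m-1)\pi_m=N+2\sum_i\bv_i$, and then do the same parity case analysis (your checks $k-\lfloor n/2\rfloor=\tfrac12(N-n)$ when $\theta=1$, and $\fkv_i=\bv_i/2$, $k=N/2$ when $\theta=0$, are correct, as is the sign convention $(-1)^\theta$ since the correction term vanishes when $\theta=0$). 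Your route has the merit of verifying independently that the PBW count of \cite{LPTTW25} really does match the slice dimension, at the cost of importing the classical centralizer-dimension formula; the paper's route stays entirely inside the cited results of \cite{LPTTW25} and is shorter. Both converge on the same parity bookkeeping at the end, and the three cases of \eqref{BCDk} collapse to the uniform answer $2\sum_{i=1}^{n-1}\fkv_i+k$ either way.
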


\begin{proof}
The first equality holds by \cite[Corollary 11.11]{LPTTW25}. 
By \cite[Corollary 11.9]{LPTTW25} (and taking the admissible shape there to be $(1,1,\ldots,1)$), this number of PBW generators is given by
    \begin{align} \label{eq:dim}
        \sum_{a=1}^{n-1} (n-a)p_a +\sum_{a=1}^n \lfloor \tfrac{p_a}2 \rfloor.
    \end{align}

    By \eqref{eq:pi_vi}, we have \begin{align} \label{eq:bvi:pi}
        \sum_{a=1}^b p_a =\bv_{n-b}, \quad\text{ for }1\lle b\lle {n-1}, 
    \end{align}
     and thus by \eqref{ell_theta} we have
     \begin{align}
         \label{eq:partialsum}
     \sum_{a=1}^{n-1} (n-a)p_a =\sum_{i=1}^{n-1} \bv_i =2\sum_{i=1}^{n-1} \fkv_i +\sum_{i=1}^{n-1} \theta_i. 
     \end{align}

    Recall that all $p_a$ have the same parity, $\sum_{a=1}^n p_a=N$, and $k=\lfloor \tfrac{N}2 \rfloor$. Calculating $\theta_i$ in \eqref{ell_theta} using \eqref{eq:bvi:pi} case-by-case, we have 
\begin{align*}
    \sum_{i=1}^{n-1} \theta_i +\sum_{a=1}^n \lfloor \tfrac{p_a}2 \rfloor
    =\begin{cases}
        0 +N/2 =k, & \text{ if all $p_i$ are even},
        \\
        (n-1)/2+(N-n)/2=k, & \text{ if all $p_i$ and $n$ are odd},
        \\
        n/2 +(N-n)/2=k, & \text{ if all $p_i$ are odd and $n$ are even}.
    \end{cases} 
\end{align*}
The lemma now follows by plugging this last formula and  \eqref{eq:partialsum} into \eqref{eq:dim}. 
\end{proof}

\begin{thm}
    \label{thm:TruncatedSTY}
    The two TSTY's $\cy_{n,\ell}^+(\sigma)$ and $ \wY_{\mu}^{N\varpi_1^\vee}$ are isomorphic.
\end{thm}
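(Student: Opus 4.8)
The plan is to promote the epimorphism $\Psi\colon\cy_{n,\ell}^+(\sigma)\twoheadrightarrow\wY_{\mu}^{N\varpi_1^\vee}$ of Proposition~\ref{prop:surjective} to an isomorphism by a Gelfand--Kirillov dimension count. Equip $\cy_{n}^+(\sigma)$, hence $\cy_{n,\ell}^+(\sigma)$, with its loop (Kazhdan) filtration. Since $\cy_{n,\ell}^+(\sigma)$ quantizes the Slodowy slice $\mc S^\epsilon_\pi$ by \cite[Corollary~11.11]{LPTTW25}, and a Slodowy slice is, as a variety, an affine space, the associated graded $\gr\cy_{n,\ell}^+(\sigma)\cong\bC[\mc S^\epsilon_\pi]$ is a polynomial ring, in particular an integral domain, of Krull dimension $d:=\dim\mc S^\epsilon_\pi=2\sum_{i=1}^{n-1}\fkv_i+k$ by Lemma~\ref{lem:dim}. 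Give $\wY_{\mu}^{N\varpi_1^\vee}$ the image filtration under $\Psi$, so that $\gr\wY_{\mu}^{N\varpi_1^\vee}\cong\bC[\mc S^\epsilon_\pi]/\gr(\ker\Psi)$. If $\ker\Psi\neq 0$ then $\gr(\ker\Psi)$ is a nonzero ideal of the domain $\bC[\mc S^\epsilon_\pi]$, so $\dim\gr\wY_{\mu}^{N\varpi_1^\vee}<d$, i.e.\ $\operatorname{GKdim}\wY_{\mu}^{N\varpi_1^\vee}<d$. It therefore suffices to prove the reverse inequality $\operatorname{GKdim}\wY_{\mu}^{N\varpi_1^\vee}\ge d$.

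For the lower bound we work inside $\mc A$. Assign to $\mc A$ the filtration in which each $w_{i,r}$ and each $z_{i,s}$ has degree $1$ while each $\dfo_{i,r}^{\pm1}$ has degree $0$; every commutator in \eqref{A:relations} then strictly lowers degree, so $\gr\mc A$ is a commutative localized polynomial ring, of Krull dimension $2\sum_{i}\fkv_i+k=d$. Endow $\wY_{\mu}^{N\varpi_1^\vee}\subseteq\mc A$ with the induced filtration; it is then enough to exhibit $d$ elements of $\wY_{\mu}^{N\varpi_1^\vee}$ whose symbols in $\gr\mc A$ are algebraically independent. We take: (i) the $k$ elementary symmetric functions in $z_1^2,\dots,z_k^2$, which are central and lie in $\bC[\bm z^2]^{S_k}$; (ii) the even coefficients $\mathsf A_i^{(2r)}$ for $1\le i<n$, $1\le 2r\le\bv_i$ (cf.~\eqref{eq:qis}), which by Lemma~\ref{lem:AiGKLO} map to the $u^{-2r}$-coefficients of $\prod_{s=1}^{\fkv_i}(1-w_{i,s}^2u^{-2})$, i.e.\ to the $\sum_i\fkv_i$ elementary symmetric functions in the $w_{i,s}^2$; and (iii) a further batch of $\sum_i\fkv_i$ elements obtained from the images $\Phi_\mu^{N\varpi_1^\vee}(\mathsf B_i^{(r)})$, $1\le r\le\fkv_i$, whose top-degree parts in $\gr\mc A$ are supported on the distinct units $\dfo_{i,r}^{-1}$ with a Vandermonde-type nonsingular coefficient matrix in the $w_{i,s}$. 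In the commutative ring $\gr\mc A$ the $\dfo$'s are invertible and commute with the $w$'s and $z$'s, so the three batches are jointly algebraically independent; this gives $k+2\sum_i\fkv_i=d$ algebraically independent symbols, hence $\operatorname{GKdim}\wY_{\mu}^{N\varpi_1^\vee}\ge d$, completing the proof.

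The main obstacle is step~(iii): one must extract the leading terms of the elaborate iGKLO formulas of Theorem~\ref{thm:iGKLO:gl} and verify that, in conjunction with batches~(i) and~(ii), the resulting symbols satisfy no polynomial relation in $\gr\mc A$ --- essentially a nondegeneracy statement for the iGKLO image of the $\mathsf B$-generators, provable by a Vandermonde argument of the kind already used in the $\fksl_2$ reductions of Section~\ref{sec:serre}. An alternative route would be to establish the special case of Conjecture~\ref{conj:presentation} for $\la=N\varpi_1^\vee$ (with the $z$'s restricted to $\bC[\bm z^2]^{S_k}$) directly, using the presentation of $\cy_{n,\ell}^+(\sigma)$ together with the identifications of Lemmas~\ref{lem:kappa} and~\ref{lem:iso-ext}; the dimension count above is precisely what would make that kernel computation go through.
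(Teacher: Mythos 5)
Your overall architecture coincides with the paper's: start from the epimorphism of Proposition \ref{prop:surjective}, use that $\cy_{n,\ell}^+(\sigma)$ is a domain with $\operatorname{GKdim}=\dim\mc S^\epsilon_\pi=2\sum_{i=1}^{n-1}\fkv_i+k$ (via \cite[Corollary 11.11]{LPTTW25} and Lemma \ref{lem:dim}), and conclude that the map is injective once $\operatorname{GKdim}\wY_\mu^{N\varpi_1^\vee}\gge d$. Your reformulation of the ``strict drop for proper quotients'' step via $\gr(\ker\Psi)$ inside the polynomial ring is fine. The divergence, and the gap, is in the lower bound. The paper does not prove it by a leading-term computation in $\mc A$: it passes to the associated graded of $\Phi_\mu^{N\varpi_1^\vee}$ with respect to the filtration of \cite[\S2.5]{LWW25islice} and invokes the geometric result \cite[Theorem 5.12]{LWW25islice}, which says that $\operatorname{Im}(\gr\,\Phi_\mu^{N\varpi_1^\vee})$ has Krull dimension exactly $2\sum_i\fkv_i+k$; combined with $\operatorname{GKdim}\wY\gge\operatorname{GKdim}\gr\wY$ and $\gr\operatorname{Im}(\Phi)\twoheadrightarrow\operatorname{Im}(\gr\Phi)$ this gives the claim. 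Your proposed replacement is a self-contained nondegeneracy argument, but it is precisely at its crux, your step (iii), that nothing is actually proved; you yourself flag it as ``the main obstacle.''

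Concretely, two things in step (iii) are unestablished. First, the asserted shape of the top symbols of $\Phi_\mu^{N\varpi_1^\vee}(\mathsf B_i^{(r)})$ is not clear: by \eqref{GKLO-B} and \eqref{Bi:chi}, $\mathsf B_i(u)$ maps to a combination of terms carrying $\dfo_{i,r}^{-1}$ (coefficients built from $\W_{i-1}$, $\overline W^-_{i+1}$), terms carrying $\dfo_{i,r}$ (coefficients built from $\Z_i$, $W_{i+1}$, of degree governed by $\bw_i$ and $\bv_{i+1}$), and, when $\theta_i=1$, an inhomogeneous $\chi_{i,0}^+$ term; with your grading ($\deg w=\deg z=1$, $\deg\dfo^{\pm1}=0$) which of these dominates depends on the data $(\bw_i,\bv_{i\pm1})$, so ``top-degree parts supported on the $\dfo_{i,r}^{-1}$ with a Vandermonde-type nonsingular matrix'' is an unverified claim, not a routine reduction to the computations of Section \ref{sec:serre} (which verify relations, not independence of symbols). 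Second, even granting that shape, the sentence ``the $\dfo$'s are invertible and commute with the $w$'s and $z$'s, so the three batches are jointly algebraically independent'' is a non sequitur: joint algebraic independence of the batch (iii) symbols with the symmetric functions of batches (i)--(ii) inside the localized ring $\gr\mc A$ requires an actual argument (e.g.\ a further grading by $\dfo$-multidegree and a Jacobian/specialization computation), and none is given. So as written the proof is incomplete at exactly the point where the paper leans on \cite[Theorem 5.12]{LWW25islice}; to make your route work you would have to carry out the symbol computation for the $\mathsf B_i^{(r)}$ (or, as you suggest, prove the $\la=N\varpi_1^\vee$ case of Conjecture \ref{conj:presentation}), neither of which is done here.
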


\begin{proof}
   In the proof, we will heavily use the notion of \emph{Gelfand-Kirillov dimension} $\operatorname{GKdim} A$ of an algebra $A$.   We refer the reader to \cite{KL00} for a detailed overview. 

   We shall prove that the epimorphism $\cy_{n,\ell}^+(\sigma) \twoheadrightarrow \wY_{\mu}^{N\varpi_1^\vee}$ from Proposition \ref{prop:surjective} is an isomorphism.

    By \cite[Corollary 11.11]{LPTTW25}, there is an isomorphism $\gr' ~\cy_{n,\ell}^+(\sigma) \cong \bC[\mc S^\epsilon_\pi]$ where $\mc S^\epsilon_\pi$ is a Slodowy slice.  Since $\bC[\mc S^\epsilon_\pi]$ is a polynomial ring, it follows that $\cy_{n,\ell}^+(\sigma)$ is  a domain. The algebra $\cy_{n,\ell}^+(\sigma)$ has a PBW basis in \cite[Corollary 11.9]{LPTTW25}, and thus its Gelfand-Kirillov dimension can be computed by counting the number of PBW generators, which by Lemma \ref{lem:dim} is equal to 
    $$
    \operatorname{GKdim} \cy_{n,\ell}^+(\sigma) = \dim \mc S^\epsilon_\pi= 2 \sum_{i=1}^{n-1} \fkv_i + k.
    $$    
{\bf Claim.} We have $\operatorname{GKdim} \wY_\mu^{N\varpi_1^\vee} \gge 2 \sum_{i=1}^{n-1} \fkv_i + k$.  
    
    Assuming this claim for the moment,  we deduce that
    \begin{equation}
    \label{eq:GKdimieq}
    \operatorname{GKdim} \cy_{n,\ell}^+(\sigma)  \lle \operatorname{GKdim} \wY_\mu^{N\varpi_1^\vee}.
    \end{equation}
    Since $\cy_{n,\ell}^+(\sigma)$ is a domain, by \cite[Proposition 3.15]{KL00} the Gelfand-Kirillov dimension of any of its proper quotients is strictly smaller. From the inequality \eqref{eq:GKdimieq}, we conclude that the epimorphism $\cy_{n,\ell}^+(\sigma) \twoheadrightarrow \wY_\mu^{N\varpi_1^\vee}$ must be an isomorphism.

    It remains to prove the claim. Consider the quotient filtration $F_{\mu_1}^\bullet \wY_\mu^{N\varpi_1^\vee}$ inherited from $\Yt_\mu[\bm z^2]^{S_k} $ as in \cite[\S2.5]{LWW25islice}.   Since Gelfand-Kirillov dimensions can only decrease upon passing to associated graded algebras \cite[Lemma 6.5]{KL00}, we have $$\operatorname{GKdim} \wY_\mu^{N \varpi_1^\vee} \gge \operatorname{GKdim} \gr ~\wY_\mu^{N \varpi_1^\vee}.$$     It is also a general fact that if $f : A \rightarrow B$ is a filtered map of filtered algebras with associated graded $\gr f : \gr A \rightarrow \gr B$, then $\operatorname{Im}(f)$ inherits a quotient filtration from $A$, and there is a surjection of graded algebras $\gr \operatorname{Im}(f) \twoheadrightarrow \operatorname{Im}( \gr f)$.   Applied to the map $ \Phi_\mu^{N\varpi_1^\vee} : \Yt_\mu[\bm z^2]^{S_k} \rightarrow \mc A$, we obtain 
    $$
    \gr ~\wY_\mu^{N \varpi_1^\vee} = \gr \operatorname{Im}( \Phi_\mu^{N \varpi_1^\vee}) \twoheadrightarrow \operatorname{Im}( \gr~ \Phi_\mu^{N \varpi_1^\vee}).
    $$
    It follows from \cite[Theorem~5.12]{LWW25islice} that the Gelfand-Kirillov dimension of $\operatorname{Im}( \gr~ \Phi_\mu^\la)$ is exactly $2 \sum_i \fkv_i + k$. (Recall that for a commutative algebra the Gelfand-Kirillov dimension is equal to the Krull dimension \cite[Proposition 7.9]{KL00}.)  Altogether we have
    $$
    \operatorname{GKdim} \wY_\mu^{N \varpi_1^\vee}\gge \operatorname{GKdim} \gr ~\wY_\mu^{N \varpi_1^\vee} \gge \operatorname{GKdim} \operatorname{Im}(\gr~\Phi_\mu^{N \varpi_1^\vee}) = 2 \sum_{i=1}^{n-1} \fkv_i + k,
    $$
    proving the claim. This completes the proof of the theorem. 
\end{proof}

\bibliographystyle{amsalpha}
\bibliography{references}
\end{document}